\newtheorem{thm}{Theorem}[section]
\newtheorem{prop}[thm]{Proposition}
\newtheorem{lem}[thm]{Lemma}
\newtheorem{cor}[thm]{Corollary}
\theoremstyle{definition}
\newtheorem{definition}[thm]{Definition}
\newtheorem{example}[thm]{Example}
\theoremstyle{remark}
\newtheorem{remark}[thm]{Remark}
\numberwithin{equation}{section}
\newcommand{\CP}{\mathbb{CP}^{n+1}}
\newcommand{\CN}{\mathbb{C}^{n+1}}
\newcommand{\U}{\mathcal{U}}
\newcommand{\C}{\mathbb{C}}
\newcommand{\Z}{\mathbb{Z}}
\newcommand{\Q}{\mathbb{Q}}
\newcommand{\F}{\mathcal{F}}
\newcommand{\K}{\mathcal{L}}
\newcommand{\R}{\mathcal{R}^{\bullet}}
\newcommand{\M}{\mathcal{M}}
\newcommand{\ST}{\mathcal{S}}
\newcommand{\V}{\mathcal{V}}
\newcommand{\G}{\mathcal{G}}
\newcommand{\MHM}{\mathrm{MHM}}
\title{Nearby Cycles and Alexander Modules of Hypersurface Complements}
\author{Yongqiang Liu}
\address{Yongqiang Liu: School of Mathematical Sciences, University of Science and Technology of China, No.96,  JinZhai Road Baohe District, Hefei, Anhui, 230026, P.R.China}
\email {liuyq@mail.ustc.edu.cn}
\date{\today}
\keywords{Sabbah specialization complex, nearby cycles, vanishing cycles, hypersurface  complement, Milnor fibre, non-isolated singularities,  Alexander polynomial, mixed Hodge structure, generic fibre.}
\subjclass[2010]{}
\begin{document}

\begin{abstract}  Let $f:\CN \rightarrow \C $ be a polynomial map, which is transversal at infinity.   Using Sabbah's specialization complex, we give a new description of the Alexander modules of the hypersurface complement $\CN\setminus f^{-1}(0)$, and obtain a general divisibility result for the associated Alexander polynomials. As a byproduct, we prove a conjecture of Maxim on the decomposition of the Cappell-Shaneson peripheral complex of the hypersurface. Moreover, as an application, we use nearby cycles to recover the mixed Hodge structure on the torsion Alexander modules, as defined by Dimca and Libgober. We also explore the relation between the generic fibre of $f$ and the nearby cycles.
\end{abstract} 

\maketitle

\section{Introduction}
Let $f:\CN \rightarrow \C $ be a polynomial map, and set $F_{0}=f^{-1}(0)$, $\U=\CN \setminus F_{0}$. The study of the topology of $\U$ and $F_{0}$ is a classical object going back to Zariski. Libgober introduced and studied (infinite cyclic) Alexander invariants associated to $\U$. For $F_{0}$ a plane curve (\cite{L1}, \cite{L2}), or a hypersurface with only isolated singularities, including at infinity (\cite{L3}), he obtained a divisibility result, which states that the only nontrivial global  Alexander polynomial divides the product of the local Alexander polynomials associated with each singular point. 
 
More recently, Maxim (\cite{LM2}) provided generalizations of these results to the case of arbitrary singularities, provided that $f$ is transversal at infinity (i.e., the hyperplane at infinity is generic  with respect to the projective completion of $F_0$), by using the intersection homology theory. In particular, he proved a general divisibility result (\cite{LM2}, Theorem 4.2), which restricts the prime factors (i.e. the zeros) of the global Alexander polynomials to those of the local Alexander polynomials of link pairs around the singular strata. Furthermore, Dimca and Libgober (\cite{DL}) showed that there exist canonical mixed Hodge structures on the (torsion) Alexander modules of the hypersurface complement. 

 On the other hand, Dimca and Nemethi provided the connection between the Alexander invariants of  $\U$ and the monodromy representation of $f$ in \cite{DN} (for the case of $f$ transversal at infinity, see \cite{DL}).  

Assume that $f$ is transversal at infinity. The aim of this paper is to realize the Alexander modules by the Sabbah specialization complex, which is closely related to the nearby cycles of $f$. The nearby cycles provide a good way to glue all the local singular information together: the stalks of the nearby cycles are isomorphic to the cohomology groups of the local Milnor fibre. Therefore, this new approach reveals the relation between the Alexander modules and the singularities of the affine hypersurface $F_{0}$, and provides a new general divisibility result. Moreover, as nearby cycle is an important ingredient in the theory of mixed Hodge modules, we can use nearby cycles to obtain a mixed Hodge structure (MHS for short) on the Alexander modules.

In this paper, all homology and cohomology groups will be assumed to have $\Q$ coefficients if the coefficients are not mentioned. To any complex algebraic variety $X$ and any ring $R$, we associate the derived category of bounded cohomologically $R$-constructible complexes of sheaves on $X$, denoted by $D^{b}_{c}(X,R)$.

We say that a polynomial $f:\CN\rightarrow \C$ is {\it transversal at infinity} if $f$ is reduced, and the projective closure $V$ of $F_{0}$ in $\CP$ is transversal in the stratified sense to the hyperplane at infinity $H=\CP-\CN$. 
Consider the infinite cyclic cover $\U^{c}$ of $\U$ corresponding to the kernel of the linking number homomorphism \begin{center}
 $f_{\ast}: \pi_{1}(\U)\rightarrow \pi_{1}(\C^{\ast})=\Z$
\end{center} induced by $f$. Then under the deck group action, $H_{i}(\U^{c},\Q)$ becomes a $\Gamma=\Q[t,t^{-1}]$-module, which is called the {\it $i$-th Alexander module of the hypersurface complement $\U$}.  For $f$ transversal at infinity, Maxim showed that $H_{i}(\U^{c})$ is a torsion $\Gamma$-module for $i\leq n$ (\cite{LM2}), and we denote by $\delta_{i}(t)$ the corresponding Alexander polynomial.

The paper is organized as follows. 

In section 2, we recall the definitions and the main results on Alexander modules of hypersurface complement. In particular, in section 2.4, we introduce the Sabbah specialization complex functor associated to $f$ (see \cite{Bu2}, or Definition \ref{d2.7} below), which is denoted by: \begin{center}
$\psi^{S}_{f}:  D^{b}_{c}(\CN,\Q) \rightarrow D^{b}_{c}(F_{0},\Gamma)$,
\end{center} and we call $\psi^{S}_{f}\Q_{\CN}$ the {\it Sabbah specialization complex}, which is a torsion $\Gamma$-module sheaf complex. Consider the natural forgetful functor \begin{center}
$\mbox{ for}:  D^{b}_{c}(F_{0},\Gamma)\rightarrow D^{b}_{c}(F_{0},\Q)$,
\end{center}  which maps a torsion $\Gamma$-module sheaf complex to its underlying $\Q$-complex. Let $\psi_{f}\Q_{\CN}$ be the Deligne nearby cycle complex associated to $f$. Then it is known that one has a non-canonical isomorphism (\cite{Br}, page 13): \begin{center}
$\mbox{ for} \circ \psi^{S}_{f} (\Q_{\CN}) \cong  \psi_{f}\Q_{\CN} [-1]$.
\end{center}

In section 3, we prove the main result of this paper, in which we realize the Alexander modules by the Sabbah specialization complex.
\begin{thm} \label{t1.1} Assume that the polynomial $f: \CN\rightarrow \C$ is transversal at infinity. With the above notations, there exist isomorphisms of $\Gamma$-modules  
  \begin{center}
  $ H_{i}({\U}^{c}) \cong H^{2n+1-i}_{c}(F_{0},\psi^{S}_{f}{\Q_{\CN}})$ for $i<n$, 
  \end{center}
  and $ H_{n}({\U}^{c})$ is a quotient $\Gamma$-module $H^{n+1}_{c}(F_{0},\psi^{S}_{f}{\Q_{\CN}})$. By using the forgetful functor, it follows that there exist isomorphisms of $\Q$-vector spaces: \begin{center}
  $ H_{i}({\U}^{c}) \cong H^{2n-i}_{c}(F_{0},\psi_{f}{\Q_{\CN}})$ for $i<n$,
  \end{center}
  and $ H_{n}({\U}^{c})$ is a quotient $\Q$-vector space of $H^{n}_{c}(F_{0},\psi_{f}{\Q_{\CN}})$.
\end{thm}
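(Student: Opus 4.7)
The plan is to realize $H_{i}(\U^{c})$ as compactly supported hypercohomology of $\U$ with values in a $\Gamma$-local system, transfer this computation to $F_{0}$ via the gluing triangle along the closed inclusion $F_{0}\subset \CN$, and then use transversality at infinity to kill the ``middle term'' of the resulting long exact sequence in the relevant range of degrees. First I would introduce the rank-one $\Gamma$-local system $\K^{S}$ on $\U$ whose stalk is $\Gamma$ and whose monodromy around $F_{0}$ is multiplication by $t$, constructed as the pushforward of $\Q_{\U^{c}}$ under the Galois cover $\U^{c}\to \U$. By covering-space theory one has $H_{\ast}(\U^{c},\Q)\cong H_{\ast}(\U,\K^{S})$ as $\Gamma$-modules, and since $\U$ is an oriented real $(2n+2)$-manifold, Poincar\'e--Lefschetz duality yields
\[
H_{i}(\U^{c})\cong H^{2n+2-i}_{c}(\U,\K^{S}).
\]

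Next, unwinding Definition \ref{d2.7}, one sees that on $F_{0}$ there is a canonical identification $\psi^{S}_{f}\Q_{\CN}\cong i^{\ast}Rj_{\ast}\K^{S}$, where $j:\U\hookrightarrow \CN$ and $i:F_{0}\hookrightarrow \CN$. The attaching triangle
\[
j_{!}\K^{S}\longrightarrow Rj_{\ast}\K^{S}\longrightarrow i_{\ast}\psi^{S}_{f}\Q_{\CN}\xrightarrow{+1}
\]
on $\CN$, after applying $H^{\ast}_{c}(\CN,-)$, produces the long exact sequence of $\Gamma$-modules
\[
\cdots\to H^{k-1}_{c}(F_{0},\psi^{S}_{f}\Q_{\CN})\to H^{k}_{c}(\U,\K^{S})\to H^{k}_{c}(\CN,Rj_{\ast}\K^{S})\to H^{k}_{c}(F_{0},\psi^{S}_{f}\Q_{\CN})\to\cdots .
\]

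The key technical step, and the main obstacle, is the vanishing $H^{k}_{c}(\CN,Rj_{\ast}\K^{S})=0$ for $k\geq n+2$. I would prove this by compactifying to $Y=\CP$: with $q:\CN\hookrightarrow Y$ and $r:H\hookrightarrow Y$ the complementary open and closed inclusions, and $J=q\circ j$, the attaching triangle for the pair $(Y,H)$ together with the compactness of $Y$ rewrites $H^{\ast}_{c}(\CN,Rj_{\ast}\K^{S})$ as the cone of the restriction map $R\Gamma(\U,\K^{S})\to R\Gamma(H,r^{\ast}RJ_{\ast}\K^{S})$. Because $V$ meets $H$ transversally in the stratified sense, a tubular neighbourhood of $V\cap H$ in $Y$ is a stratified product and the local monodromy of $f$ along $H$ is trivial, so $r^{\ast}RJ_{\ast}\K^{S}$ is essentially a constant $\Gamma$-complex on $H$ whose cohomology is that of a generic hyperplane section of $V$. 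A weak-Lefschetz comparison between $\U$ and this section, combined with $\dim_{\C}\U=n+1$, then shows the above restriction is an isomorphism in degrees $\geq n+2$, delivering the vanishing.

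Granted this vanishing, the long exact sequence produces the isomorphism $H^{k-1}_{c}(F_{0},\psi^{S}_{f}\Q_{\CN})\cong H^{k}_{c}(\U,\K^{S})$ for $k>n+2$ and a surjection $H^{n+1}_{c}(F_{0},\psi^{S}_{f}\Q_{\CN})\twoheadrightarrow H^{n+2}_{c}(\U,\K^{S})$ at $k=n+2$. Substituting $k=2n+2-i$ translates these, via the Poincar\'e--Lefschetz identification above, into the two $\Gamma$-module statements of the theorem. The $\Q$-vector space version follows at once by applying the forgetful functor and invoking the shift relation $\mathrm{for}\circ\psi^{S}_{f}(\Q_{\CN})\cong \psi_{f}\Q_{\CN}[-1]$ recorded in the introduction. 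Without the transversality-at-infinity hypothesis the boundary term at $H$ would fail to vanish in the right range, so controlling that term is where the hypothesis is essential.
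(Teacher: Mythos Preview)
Your overall architecture is exactly the paper's: identify $H_{i}(\U^{c})\cong H^{2n+2-i}_{c}(\U,\K)$, apply $H^{*}_{c}(\CN,-)$ to the attaching triangle $j_{!}\K\to Rj_{\ast}\K\to i_{\ast}\psi^{S}_{f}\Q\to$, and then kill $H^{k}_{c}(\CN,Rj_{\ast}\K)$ for $k\geq n+2$. The divergence, and the gap, is entirely in the justification of that vanishing.

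Two concrete problems with your argument for the vanishing. First, the monodromy of $\K$ around $H$ is \emph{not} trivial: a small meridian about $H$ corresponds to a large loop in $\CN$, and $f$ sends it to a loop winding $-d$ times around $0$, so the monodromy is $t^{-d}$. Consequently $r^{\ast}RJ_{\ast}\K$ is not a constant $\Gamma$-complex on $H$; already its stalks differ over $H\setminus V$ (where you get $\Gamma/(t^{d}-1)$ in degree $1$) and over strata of $H\cap V$. Second, even granting that both $H^{k}(\U,\K)$ and $H^{k}(H,r^{\ast}RJ_{\ast}\K)$ vanish for $k\geq n+2$, the long exact sequence only yields $H^{k}_{c}(\CN,Rj_{\ast}\K)=0$ for $k\geq n+3$; to get the crucial case $k=n+2$ (which is precisely what controls the quotient statement at $i=n$) you would need the restriction $H^{n+1}(\U,\K)\to H^{n+1}(H,r^{\ast}RJ_{\ast}\K)$ to be \emph{surjective}, and your ``weak-Lefschetz comparison'' does not supply this.

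The paper avoids analyzing the restriction to $H$ altogether. It uses the commuting square of open inclusions $\U\hookrightarrow\CN$, $\U\hookrightarrow\CP\setminus V$ into $\CP$: transversality at infinity gives a base change isomorphism $k_{!}\circ Rl_{\ast}\cong Rl'_{\ast}\circ k'_{!}$ (Sch\"urmann, Lemma~6.0.5), whence $H^{k}_{c}(\CN,Rl_{\ast}\K)\cong H^{k}(\CP\setminus V,\,k'_{!}\K)$. Since $\CP\setminus V$ is affine of dimension $n+1$ and $k'_{!}\K[n+1]\cong IC_{\overline m}(\CP\setminus V,\K)[-n-1]$ is perverse, Artin vanishing gives $H^{k}=0$ for $k\geq n+2$ in one stroke. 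So the transversality hypothesis enters not through a tubular-neighbourhood description near $H$, but through this base change identity; that is the missing idea in your proposal.
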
 

  \medskip
  
 By using Corollary 1.3 in \cite{DL} and Theorem \ref{t1.1}, we reveal the connection between the generic fibre of $f$ and the Sabbah specialization complex in section 4:
\begin{thm} \label{t1.2}  Assume that the polynomial $f: \CN\rightarrow \C$ is transversal at infinity. Let $F$ be the generic fibre of $f$. Then there exist isomorphisms of $\Gamma$-modules 
\begin{center}
$H_{i}(F)\cong H^{2n+1-i}_{c}(F_{0},\psi_{f}^{S}\Q_{\CN}) $  for $i< n$
\end{center}  and an injective $\Gamma$-module map $H^{n+1}_{c}(F_{0}, \psi^{S}_{f}\Q_{\CN})\rightarrow H_{n}(F)$, where the action of  $\Gamma$ for $H_{i}(F)$ is induced from the monodromy at the origin, denoted by $\mathcal{M} _{0}(f)_{i}$. By using the forgetful functor, it follows that there exist  isomorphisms of $\Q$-vector spaces:  
\begin{center}
$H_{i}(F)\cong H^{2n-i}_{c}(F_{0}, \psi_{f}\Q_{\CN}) $ for $ i< n$,
\end{center}  and $H^{n}_{c}(F_{0},\psi_{f}\Q)$ is a $\Q$-vector subspace of $H_{n}(F)$, all compatible with the respective monodromy actions.
\end{thm}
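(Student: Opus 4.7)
The plan is to derive Theorem \ref{t1.2} by composing the identification of the Alexander modules in terms of the Sabbah specialization complex supplied by Theorem \ref{t1.1} with the comparison between the Alexander modules of $\U$ and the homology of the generic fibre $F$ provided by Corollary 1.3 of \cite{DL}. Under the transversality-at-infinity assumption, that comparison asserts that the $\Gamma$-action on $H_i(F)$ induced by the monodromy $\mathcal{M}_0(f)_i$ at the origin coincides with the deck-transformation action on the Alexander modules $H_i(\U^c)$, in such a way that $H_i(F)\cong H_i(\U^c)$ as $\Gamma$-modules for $i<n$, and that $H_n(\U^c)$ is realised as the torsion submodule of $H_n(F)$, yielding an injection $H_n(\U^c)\hookrightarrow H_n(F)$. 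The intuition behind this coincidence of monodromies is that, for $f$ transversal at infinity, the only contribution to the monodromy on the generic fibre comes from the fibre over the origin.

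For $i<n$, the two identifications chain together directly:
\[
H_i(F)\ \cong\ H_i(\U^c)\ \cong\ H^{2n+1-i}_c(F_0,\psi^{S}_{f}\Q_{\CN}),
\]
giving the claimed isomorphism of $\Gamma$-modules, with monodromy compatibility automatic from both steps.

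For $i=n$, Theorem \ref{t1.1} produces a surjection $H^{n+1}_c(F_0,\psi^{S}_{f}\Q_{\CN}) \twoheadrightarrow H_n(\U^c)$, and composing with the Dimca--Libgober injection $H_n(\U^c)\hookrightarrow H_n(F)$ gives a $\Gamma$-module map $H^{n+1}_c(F_0,\psi^{S}_{f}\Q_{\CN}) \to H_n(F)$. The delicate point, and the main technical obstacle of the whole argument, is to verify that this composite is actually injective; equivalently, that the surjection produced by Theorem \ref{t1.1} has trivial kernel. I expect this to follow by revisiting the spectral (or long-exact) sequence used in the proof of Theorem \ref{t1.1}, and invoking Maxim's torsion statement for $H_n(\U^c)$ together with the fact that $H_n(\U^c)$ captures the entire $\Gamma$-torsion of $H_n(F)$, so that no nontrivial class of $H^{n+1}_c$ can be forced into the free part of $H_n(F)$ and therefore none can map to zero.

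Finally, the $\Q$-vector space assertions at the end of the theorem follow at once by applying the forgetful functor $\mathrm{for}\colon D^b_c(F_0,\Gamma)\to D^b_c(F_0,\Q)$ to all of the identifications above and invoking the non-canonical isomorphism $\mathrm{for}\circ \psi^{S}_{f}\Q_{\CN} \cong \psi_f\Q_{\CN}[-1]$ recalled in Section 2. The shift by $[-1]$ converts $H^{2n+1-i}_c$ into $H^{2n-i}_c$ and $H^{n+1}_c$ into $H^n_c$, while compatibility with the monodromy action is preserved throughout by construction.
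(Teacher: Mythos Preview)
Your argument for $i<n$ is fine and is essentially how the paper proceeds: both sides are identified with $H_i(\U^c)$ and one checks that the comparison map is the natural one.

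The gap is at $i=n$, and it is twofold. First, the Dimca--Libgober comparison does \emph{not} give an injection $H_n(\U^c)\hookrightarrow H_n(F)$. The inclusion $F\hookrightarrow \U^c$ is a homotopy $n$-equivalence, so it induces a \emph{surjection} $H_n(F)\twoheadrightarrow H_n(\U^c)$; moreover $H_n(F)$ is itself $\Gamma$-torsion (the paper notes this explicitly in the proof), so your description of $H_n(\U^c)$ as ``the torsion submodule of $H_n(F)$'' does not single out a proper submodule. Second, even if an injection in that direction existed, the composite of the surjection $H^{n+1}_c(F_0,\psi^S_f\Q)\twoheadrightarrow H_n(\U^c)$ from Theorem \ref{t1.1} with an injection would have no reason to be injective: the kernel of the first map is the cokernel of an injective map $\Gamma^\mu\to\Gamma^\mu$ (see the exact sequence (\ref{3.3})), which is typically a nonzero torsion module.

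The paper avoids this by not factoring through $H_n(\U^c)$ at all. It applies the distinguished triangle $l_!\K\to Rl_*\K\to i_!\psi^S_f\Q$ over the tube $T_0=f^{-1}(D_0)$ rather than over $\CN$, obtaining a map
\[
H^{n+1}_c(F_0,\psi^S_f\Q)\longrightarrow H^{n+2}_c(T_0,l_!\K)\cong H_n(F)
\]
directly as a connecting morphism. Injectivity then amounts to the vanishing of $H^{n+1}_c(T_0,Rl_*\K)$, which follows by comparing the two long exact sequences (over $T_0$ and over $\CN$) via the open inclusion $T_0\hookrightarrow \CN$: one shows the induced map $H^j_c(T_0,Rl_*\K)\to H^j_c(\CN,Rl_*\K)$ is zero (source torsion, target free by (\ref{3.2})), and then uses (\ref{3.2}) and (\ref{4.1}) to force $H^{2n+2-i}_c(T_0,Rl_*\K)=0$ for $i\neq n$. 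This is the missing mechanism in your proposal.
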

In particular, this theorem shows that $H^{n+1}_{c}(F_{0}, \psi_{f}^{S}\Q_{\CN})$ gives a better (upper) estimate for $H_{n}(\U^{c})$ than $H_{n}(F)$ with the monodromy $\mathcal{M} _{0}(f)_{n}$. 
  \medskip 
  
In section 5, we obtain the following divisibility result via the Sabbah specialization complex, which is similar to the general divisibility result proved by Maxim (\cite{LM2}, Theorem 4.2). 
\begin{thm} \label{t1.3}  Assume that the polynomial $f: \CN\rightarrow \C$ is transversal at infinity. Fix a Whitney b-regular stratification for $F_{0}$. Then for any integer $i$, with $1 \leq i \leq n$, the prime factors of the global Alexander polynomial $\delta_{i}(t)$ are among the prime factors of $\Delta^{\ST}_{q}(t)$,  in the range $n-i\leq s=\dim\ST \leq n$ and $2n-2s-i\leq q \leq n-s$, where $\Delta^{\ST}_{q}(t)$ is the $q$-th local Alexander polynomial associated to the link pair of an arbitrary point in the stratum $\ST\subseteq F_{0}$.
\end{thm}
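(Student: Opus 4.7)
My plan is to combine Theorem \ref{t1.1} with a two-step spectral sequence argument built from the Whitney stratification of $F_0$. Theorem \ref{t1.1} shows that $H_i(\U^c)$ is $\Gamma$-isomorphic to (for $i<n$), or a $\Gamma$-quotient of (for $i=n$), $H^{2n+1-i}_c(F_0, \psi^S_f \Q_{\CN})$. Hence it suffices to bound the prime factors of the characteristic polynomial of the torsion $\Gamma$-module $H^{2n+1-i}_c(F_0, \F)$, where $\F := \psi^S_f \Q_{\CN}$.

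Set $Z_s := \bigcup_{\dim \ST' \le s} \overline{\ST'}$ and $U_s := Z_s \setminus Z_{s-1}$, the union of the $s$-dimensional strata. The $\Gamma$-equivariant long exact sequences in compactly supported hypercohomology for the open/closed decompositions $Z_s = U_s \sqcup Z_{s-1}$ show, by induction on $s$, that the prime factors of $H^j_c(F_0, \F)$ lie in the union over $0 \le s \le n$ of the prime factors of $H^j_c(U_s, \F|_{U_s})$. On each $U_s$, a smooth complex manifold of dimension $s$, the complex $\F|_{U_s}$ has locally constant cohomology sheaves by constructibility with respect to the chosen stratification, so I apply the $\Gamma$-equivariant hypercohomology spectral sequence
\[
E_2^{p,r} = H^p_c\bigl(U_s,\, \mathcal{H}^r(\F)|_{U_s}\bigr) \Longrightarrow H^{p+r}_c(U_s, \F|_{U_s}).
\]

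To identify the coefficient local systems $\K^r_s := \mathcal{H}^r(\F)|_{U_s}$: the isomorphism $\mbox{for}\circ \psi^S_f \Q \cong \psi_f \Q[-1]$ identifies the stalk of $\K^r_s$ at $x$ in an $s$-dimensional stratum $\ST$ with $H^{r-1}(F_x, \Q)$, where $F_x$ is the local Milnor fibre of $f$ at $x$. By Whitney b-regularity, $F_x$ is homotopy equivalent to the Milnor fibre of the transverse slice to $\ST$, a complex $(n-s)$-manifold, so $(\K^r_s)_x = 0$ unless $1 \le r \le n-s+1$. The monodromy of $f$ on this stalk is the Milnor monodromy on $H^{r-1}(F_x)$, so as a $\Gamma$-module $(\K^r_s)_x$ is annihilated by $\Delta^{\ST}_{r-1}(t)$. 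Since the annihilator of a sheaf of $\Gamma$-modules is the intersection of the stalk annihilators, $\K^r_s$ is annihilated by $\prod_{\ST \subseteq U_s}\Delta^{\ST}_{r-1}(t)$, and hence so is every $H^p_c(U_s, \K^r_s)$; the prime factors of the latter therefore lie among those of the $\Delta^{\ST}_{r-1}(t)$ for $s$-dimensional strata $\ST \subseteq U_s$.

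Setting $q := r - 1$ and assembling constraints, contributions to $H^{2n+1-i}_c(F_0, \F)$ come only from $(p,r,s)$ with $p + r = 2n + 1 - i$, $0 \le p \le 2s$ (using $\dim_\mathbb{R} U_s = 2s$), and $0 \le q \le n - s$ (from the stalk range above). The first two inequalities force $q \ge 2n - 2s - i$; combined with $q \le n - s$ this requires $s \ge n - i$, and together with the trivial bound $s \le n = \dim F_0$ we recover exactly the stated range $n - i \le s \le n$, $2n - 2s - i \le q \le n - s$. The main obstacle I anticipate is the $\Gamma$-equivariant identification of the stalks of $\mathcal{H}^r(\psi^S_f \Q)$ with $H^{r-1}(F_x, \Q)$ carrying the Milnor monodromy, since Sabbah's torsion $\Gamma$-module structure is a priori finer than what the forgetful functor records; verifying that $(\K^r_s)_x$ is annihilated precisely by $\Delta^{\ST}_{r-1}(t)$ is the key step linking $\F$ to the link-pair Alexander modules.
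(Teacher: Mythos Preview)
Your argument is correct and for the top case $i=n$ it coincides with the paper's: both filter $F_0$ by the closed unions $Z_s$ of strata of dimension $\le s$, pass to compactly supported hypercohomology over each pure stratum, and then invoke the hypercohomology spectral sequence on a stratum together with the stalk identification $\mathcal{H}^{q+1}(\psi^S_f\Q)_x\cong \overline{H_q(F_x)}$ (this is exactly the content of the paper's Lemma~\ref{l5.1}, i.e.\ Maxim's Lemma~4.3, combined with Lemma~\ref{l2.9} and equation~(\ref{5.1})). Your anticipated obstacle---that the $\Gamma$-structure on the stalk is the Milnor monodromy up to the involution $t\mapsto t^{-1}$---is resolved in the paper exactly as you suggest, using that the local Alexander polynomials have only roots of unity as zeros, so $\overline{\Delta^{\ST}_q(t)}=\Delta^{\ST}_q(t)$.

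Where your route diverges is for $i<n$. The paper does \emph{not} run the spectral sequence argument in each degree; instead it proves only the case $i=n$ directly and then uses the Lefschetz hyperplane section theorem: a generic codimension-$k$ linear section $L$ of $\CP$ yields a transversal-at-infinity hypersurface $W=V\cap L$ in $L\cong\mathbb{CP}^{n-k+1}$ with $\delta^V_{n-k}(t)=\delta^W_{n-k}(t)$, and applying the already-established top case to $W$ gives the bound for $\delta_{n-k}$ after reindexing $r=\dim(\ST\cap L)=s-k$. Your approach avoids Lefschetz entirely by observing that the dimension count $p+r=2n+1-i$, $0\le p\le 2s$, $0\le q=r-1\le n-s$ already forces the range $n-i\le s\le n$, $2n-2s-i\le q\le n-s$ for every $i$. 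This is more economical and self-contained; the paper's route, on the other hand, makes explicit the geometric fact that the link pairs of strata of $W$ in $L$ agree with those of the corresponding strata of $V$, which is conceptually useful elsewhere.
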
 
  
Let us compare this divisibility result with the one proved by Maxim (\cite{LM2}, Theorem 4.2). The difference between these two divisibility results is on the choice of the strata. Both ways has their own advantages. If $V$ is not irreducible, then Maxim's result is much better, since we can get a divisibility result for each irreducible component of $V$ and choose the common factors on all the irreducible components. But if $V$ is irreducible, the divisibility result via the Sabbah specialization complex is better, since we only need to choose strata in $F_{0}$ rather than $V$. 

In section 5.2, we give several applications of realizing Alexander modules by the Sabbah specialization complex; in section 5.3, we prove a conjecture of Maxim (see \cite{LM1}, Conjecture 4.1, and let us call it Maxim's Conjecture): when the nearby cycle $\psi_{f}\Q_{\CN}$ splits in the category of (shifted) perverse sheaves, the prime factors of the global Alexander polynomial are only among the top local Alexander polynomials of link pairs around the singular strata.
\begin{thm} \label{t1.4}  Assume that the polynomial $f: \CN\rightarrow \C$ is transversal at infinity. Fix a Whitney b-regular stratification for $F_{0}$. If $\psi_{f}\Q_{\CN}$ splits, then for a fixed integer $i$, with $1 \leq i \leq n$, a prime element $\gamma\in \Gamma$ divides $ \delta_{i}(t) $ only if $\gamma$ divides one of the polynomials $\Delta^{\ST}_{n-s}(t)$, where $\Delta^{\ST}_{n-s}(t)$ is the top local Alexander polynomial associated to an arbitrary point in the singular stratum $\ST$ in $F_{0}$ of complex dimension $s$, with $s\geq n-i$.
\end{thm}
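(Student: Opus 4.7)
The strategy is to combine Theorem \ref{t1.1} with the splitting hypothesis in order to sharpen the divisibility result of Theorem \ref{t1.3}. By assumption, the perverse sheaf $\psi_{f}\Q_{\CN}[n]$ decomposes as
$$\psi_{f}\Q_{\CN}[n] \;\cong\; \bigoplus_{\alpha} IC\bigl(\overline{\ST_\alpha},\K_\alpha\bigr),$$
where $\ST_\alpha$ runs over strata of $F_0$ of dimension $s_\alpha$ and each $\K_\alpha$ is a local system on $\ST_\alpha$. The intrinsic nearby-cycle monodromy, which underlies the $\Gamma$-structure on $\psi^{S}_{f}\Q_{\CN}$, permutes isomorphic simple constituents and restricts to an endomorphism on each isotypic component, so the first step is to promote the above splitting to a decomposition of $\psi^{S}_{f}\Q_{\CN}$ as a direct sum of $\Gamma$-module sheaf complexes (grouping IC-summands into isotypic blocks if necessary).

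Next, apply Theorem \ref{t1.1} to view $H_i(\U^c)$ as a $\Gamma$-subquotient of $H^{2n+1-i}_{c}(F_0,\psi^{S}_{f}\Q_{\CN})$. Using the relation $\mathrm{for}(\psi^{S}_{f}\Q_{\CN}) \cong \psi_{f}\Q_{\CN}[-1]$ together with the splitting, this hypercohomology group decomposes (as a $\Q$-vector space, and block by block as a $\Gamma$-module) into a direct sum of terms of the form
$$\mathbb{H}^{n-i}_c\bigl(\overline{\ST_\alpha},\, IC(\overline{\ST_\alpha},\K_\alpha)\bigr).$$
Standard vanishing for intersection cohomology with compact supports gives $\mathbb{H}^{k}_c\bigl(\overline{\ST_\alpha}, IC(\overline{\ST_\alpha},\K_\alpha)\bigr) = 0$ unless $-s_\alpha \le k \le s_\alpha$; setting $k = n - i$ forces $s_\alpha \ge n - i$, precisely the range appearing in the theorem.

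The remaining ingredient is the identification of $\K_\alpha$ with the top local Alexander data. At a generic point $x\in \ST_\alpha$, Whitney regularity together with the transversality of $f$ to the stratification implies that the local Milnor fibre of $f$ at $x$ is, up to homotopy, the transverse Milnor fibre of complex dimension $n-s_\alpha$. Under the given splitting, the summand $IC(\overline{\ST_\alpha},\K_\alpha)$ contributes the stalk $(\K_\alpha)_x[s_\alpha]$ at $x$, which must match the top-degree cohomology group $H^{n-s_\alpha}(F_x)$ of the nearby-cycle stalk together with its intrinsic $\Gamma$-action. Consequently the characteristic polynomial of the monodromy on $\K_\alpha$ divides the top local Alexander polynomial $\Delta^{\ST_\alpha}_{n-s_\alpha}(t)$; since intersection cohomology of a local system introduces no prime factors beyond those of the local system's monodromy, any prime factor of $\delta_i(t)$ divides some $\Delta^{\ST_\alpha}_{n-s_\alpha}(t)$ with $s_\alpha \ge n - i$.

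The hard part will be this last identification: carefully arguing that, once the perverse splitting is available, the local system $\K_\alpha$ arising on $\ST_\alpha$ is exactly (up to divisibility of Alexander polynomials) the monodromy on the top cohomology of the transverse Milnor fibre, without contamination from the lower-degree Milnor cohomology that would otherwise appear through IC-summands supported on larger-dimensional strata. The splitting hypothesis is essential here, as it precludes nontrivial extensions between the perverse constituents and thus forces each stratum to contribute only through its top local Alexander polynomial.
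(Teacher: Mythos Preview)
Your overall architecture matches the paper's: decompose $\psi_{f}\Q$ along strata, use Theorem~\ref{t1.1} to bound $\delta_i(t)$ by the hypercohomology of the summands, and argue that each summand only contributes factors of the corresponding top local polynomial. Two substantive pieces are missing, and the paper's proof is largely devoted to exactly these.

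\textbf{The open stratum.} Your decomposition of $\psi_{f}\Q[n]$ runs over \emph{all} strata of $F_0$, including the dense smooth part $\ST_0=F_0\setminus\mathrm{Sing}(F_0)$ of dimension $n$. There the local system is $\K_0=H^0(F_x)\cong\Q$ with trivial monodromy, so by your own ``no new primes'' principle the corresponding IC-summand contributes only powers of $t-1$. Your argument thus proves: $\gamma\mid\delta_i(t)$ implies $\gamma=t-1$ or $\gamma\mid\Delta^{\ST}_{n-s}(t)$ for some \emph{singular} stratum. The theorem, however, excludes $t-1$. The paper closes this gap via Proposition~\ref{p5.10}: the splitting hypothesis forces $F_0$ to be a rational homology manifold (this uses Lemma~\ref{l5.9} and an induction over strata on the links), hence $\phi_{f,1}\Q=0$, $\psi_f\Q\cong\phi_f\Q\oplus\Q_{F_0}$, and $\delta_i(1)\neq 0$ for $0<i\le n$. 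One then works with the splitting of $\phi_f\Q$, which is supported on $\mathrm{Sing}(F_0)$, so the open stratum never enters. This reduction also cleanly handles the case $i=n$, where Theorem~\ref{t1.1} only gives a quotient, by using Remark~\ref{r5.6}. You have not addressed any of this.

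\textbf{The claim that $IC$ introduces no new prime factors.} You state that ``intersection cohomology of a local system introduces no prime factors beyond those of the local system's monodromy'' and flag it as the hard part, but you do not prove it. It is not automatic: the stalks of $IC(\overline{\ST},\K_{n-s})$ over boundary strata of $\overline{\ST}$ are link cohomology groups, and one must check that the $\Gamma$-action on these is still governed by $\Delta^{\ST}_{n-s}(t)$. The paper proves this as an explicit \emph{Claim} by downward induction on the dimension of the stratum containing a point $x\in\overline{\ST}$, using the Borel--Goresky--MacPherson stalk formula for $IC$ (identifying $\mathcal{H}^{-j}(IC)_x$ with intersection homology of the link) together with Lemma~\ref{l5.1}. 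The same inductive mechanism then passes from stalks to $H^{\ast}_c(\overline{\ST},IC)$. Your proposal needs this argument, or an equivalent one, spelled out.
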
 
In this special case, we also give a partial answer to the multiplicities of the roots of the global Alexander polynomials (see Corollary \ref{c5.13}).

\medskip

Section 6 contains the most important application of realizing Alexander module by the nearby cycles: we recover the MHS on the torsion Alexander modules as defined by Dimca and Libgober in \cite{DL}. Since the nearby cycle functor is a functor in the category of mixed Hodge modules (\cite{S}), $H^{2n-i}_{c}(X_{0},\psi_{f}\Q_{\CN})$ naturally carries a MHS. The difficulty of giving $H_{i}(\U^{c})$ ($i\leq n$) a MHS via the nearby cycles arises for $i=n$: $ H_{n}({\U}^{c})$ is just a quotient $\Q$-vector space of $H^{n}_{c}(X_{0},\psi_{f}\Q_{\CN})$. We will solve this problem by constructing a transversal pair in section 6.1:
\begin{thm} \label{t1.5} Assume that the polynomial $f: \CN\rightarrow \C$ is transversal at infinity. Then for $0 \leq i\leq n $, $H_{i}(\U^{c})$ carries the MHS inherited from the nearby cycles.
\end{thm}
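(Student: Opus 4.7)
The proof naturally splits by the dichotomy in Theorem \ref{t1.1}. For $0 \le i \le n-1$, the theorem provides an isomorphism of $\Q$-vector spaces $H_i(\U^c) \cong H^{2n-i}_c(F_0, \psi_f \Q_{\CN})$. By Saito's theory, the Deligne nearby cycles functor $\psi_f$ lifts (up to the standard shift $[-1]$) to a functor $\psi_f^H \colon \MHM(\CN) \to \MHM(F_0)$, so $\psi_f \Q_{\CN}$ underlies a mixed Hodge module complex on $F_0$. Its hypercohomology with compact supports $H^{2n-i}_c(F_0, \psi_f \Q_{\CN})$ therefore carries a canonical MHS, which I would simply transport along the isomorphism of Theorem \ref{t1.1}. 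This handles the entire range $i < n$ essentially for free.

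The genuinely new content is the case $i = n$, where Theorem \ref{t1.1} only furnishes a surjection $H^n_c(F_0, \psi_f \Q_{\CN}) \twoheadrightarrow H_n(\U^c)$. To inherit a MHS on the quotient, one must realize the kernel of this surjection as a sub-MHS, equivalently realize $H_n(\U^c)$ as the image (or cokernel) of a morphism that is the underlying $\Q$-linear map of a morphism in the derived category of mixed Hodge modules. The plan, following the preview in the introduction of Section 6, is to construct a \emph{transversal pair}: choose a generic hyperplane $H' \subset \CP$ transversal in the stratified sense to the projective closure $V \cup H$ of $F_0$, and use the resulting geometric setup (parallel to the generic fibre picture exploited in Theorem \ref{t1.2}) to exhibit the surjection as being induced by a morphism between hypercohomologies of mixed Hodge modules. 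Once the kernel is a sub-MHS, the quotient inherits a canonical MHS.

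In carrying this out, I would first track back through the proof of Theorem \ref{t1.1} to isolate where the surjection fails to be an isomorphism: the obstruction should be pinpointed to a single connecting morphism in a long exact sequence (or a spectral sequence edge map) whose source and target both live in $\MHM$. The transversal pair should then be constructed so that precisely this connecting morphism is identified with a geometric map (restriction, specialization, or Gysin-type map) whose Saito-Hodge lift is already available.

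The principal obstacle I foresee is twofold: (i) verifying that the natural surjection of Theorem \ref{t1.1} is genuinely the realization of a morphism in $\MHM$ rather than only a morphism at the level of $\Q$-complexes -- this requires tracking the Hodge-theoretic lift through the forgetful functor and through the Sabbah-to-Deligne comparison $\mathrm{for}\circ\psi_f^S \cong \psi_f[-1]$ -- and (ii) showing that the resulting MHS on $H_n(\U^c)$ is intrinsic, i.e.\ independent of the choice of transversal hyperplane $H'$. Both checks rely on the functoriality of $\psi_f^H$ and its compatibility with restriction to transversal subvarieties; once these compatibilities are set up, the conclusion for $i = n$ should follow in a way uniform with the easy case $i < n$.
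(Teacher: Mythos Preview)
Your handling of the range $i<n$ is fine and matches the paper. The gap is at $i=n$, and it stems from a misreading of what the ``transversal pair'' is.

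In the paper, a transversal pair is \emph{not} a generic hyperplane $H'\subset\CP$ cutting the existing configuration. It is a triple $(V',H',L)$ living in $\mathbb{CP}^{n+2}$, one dimension \emph{up}: $V'$ is a hypersurface and $H'$ a hyperplane in $\mathbb{CP}^{n+2}$, both transversal to each other, and $L\cong\CP$ is a hyperplane transversal to $V'\cup H'$ with $V'\cap L=V$, $H'\cap L=H$. The concrete choice (Lemma \ref{l6.2}) is to take $V'$ the projective cone on $V$ and $H'=\{x_0-x_{n+2}=0\}$. The point of going up is that for the new polynomial $f'$ on $\C^{n+2}$, Theorem \ref{t1.1} yields genuine isomorphisms $H_i((\U')^c)\cong H^{2(n+1)-i}_c(F_0',\psi_{f'}\Q)$ for all $i\le n$ (since $n<n+1$), while Lefschetz on the section by $L$ gives $H_i(\U^c)\cong H_i((\U')^c)$ for $i\le n$. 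So $H_n(\U^c)$ is now an \emph{isomorphic image} of a hypercohomology group of a mixed Hodge module, not merely a quotient, and the MHS is immediate. The independence of the choice of pair is then checked separately (Lemma \ref{l6.3}).

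Your alternative plan---track the proof of Theorem \ref{t1.1} and show that the surjection $H^n_c(F_0,\psi_f\Q)\twoheadrightarrow H_n(\U^c)$ is the rational realization of a morphism in $D^b\MHM$---runs into a concrete obstruction. The long exact sequence producing that surjection comes from the triangle $l_!\K\to Rl_*\K\to i_!\psi^S_f\Q$ in $D^b_c(\CN,\Gamma)$, and the terms responsible for the failure of injectivity are the free $\Gamma$-modules $H^{n+1}_c(\CN,l_!\K)\cong H^{n+1}_c(\CN,Rl_*\K)\cong\Gamma^\mu$ (see (\ref{3.3})). These are infinite-dimensional over $\Q$ and do not underlie objects of $\MHM$; the triangle simply does not lift. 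You correctly flag this as obstacle (i), but nothing in your outline addresses it, and the ``transversal hyperplane in $\CP$'' you propose would not help. The paper's device---pass to one dimension higher so the problematic degree becomes subcritical---is precisely what circumvents this. (Only \emph{after} that construction does Remark \ref{r6.4} show, as a by-product, that the original surjection is indeed a MHS morphism.)
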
 
The construction of a transversal pair can be used to show that, for polynomial transversal at infinity, most facts about torsion Alexander modules can be reduced to the homogeneous polynomial case.  Indeed, let $\widetilde{f}$ be the homogenization of $f$ with respect to the hyperplane at infinity $H$ (i.e., $V=\lbrace \widetilde{f}=0\rbrace$), and let $F_{\widetilde{f}}$ be the corresponding Milnor fibre given by $\widetilde{f}=1$. In section 6.2, we use the transversal pair construction to show that\begin{center}
    $H_{i}(\U^{c})\cong  H_{i}(F_{\widetilde{f}})$ for $i\leq n$,
\end{center}   which gives Proposition 4.9 in \cite{LM2} and offers a new way to construct MHS. In fact, there exists a natural map from $\U^{c}$ to $F_{\widetilde{f}}$, which induces a homotopy $(n+1)$-equivalence, thus generalizing similar results obtained by Libgober in the case of isolated singularities (\cite{L3}, Corollary 4.9). 

In section 6.3, we show that the above mentioned MHS coincide (including the MHS defined by Dimca and Libgober), and prove the following proposition, which gives Corollary 1.6 in \cite{DL}.
\begin{prop} \label{p1.6}
  Assume that the polynomial $f: \CN\rightarrow \C$ is transversal at infinity. The MHS defined on $H_{i}(\U^{c})$  ($i\leq n$) in Theorem \ref{t1.5} is canonical: compatible with the action of $\Gamma$, i.e., $t: H_{i}(\U^{c})\rightarrow H_{i}(\U^{c})$ is a MHS morphism.  Moreover, there exist MHS isomorphisms \begin{center}
$H_{i}(F_{h})\cong H_{i}(F_{c}) \cong H^{2n-i}_{c}(F_{0}, \psi_{f}\Q_{\CN}) \cong H_{i}(\U^{c})$  for $i<n$,
\end{center} and three surjective MHS morphisms: \begin{center}
$H_{n}(F_{c})\rightarrow H_{n}(\U^{c})$; $H_{n}(F_{h})\rightarrow H_{n}(\U^{c})$; $H^{n}_{c}(F_{0}, \psi_{f}\Q_{\CN})\rightarrow H_{n}(\U^{c})$,
\end{center} where $h=f_{d}$, the top degree part of $f$ (assume that $f$ has degree $d$), $F_{h}$ is the Milnor fibre given by $h=1$, and $F_{c}=f^{-1}(c)$ is a generic fibre of $f$ with $c\in \C$. This also shows that the MHS on the generic fibre $H_{i}(F_{c})$ is independent of choice of $c$ for $i<n$.
\end{prop}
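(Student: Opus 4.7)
The approach I would take is to derive Proposition \ref{p1.6} from Theorem \ref{t1.5} by upgrading the underlying $\Q$-vector space and $\Gamma$-module identifications of Theorems \ref{t1.1} and \ref{t1.2} to mixed Hodge structure statements. The main tool is Saito's theory, which asserts that the nearby cycle functor underlies a functor of mixed Hodge modules and that the canonical monodromy operator is a morphism therein.

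First I would establish $\Gamma$-compatibility. The monodromy on $\psi^{S}_{f}\Q_{\CN}$, equivalently multiplication by $t$ on the forgetful image $\psi_{f}\Q_{\CN}$, is a MHM morphism, so its induced action on $H^{2n-i}_{c}(F_{0},\psi_{f}\Q_{\CN})$ is a MHS morphism. Transporting along the $\Gamma$-equivariant isomorphism (respectively surjection for $i=n$) provided by Theorem \ref{t1.1} and promoted to MHS by the transversal pair construction of Theorem \ref{t1.5} endows $H_{i}(\U^{c})$ with a MHS on which $t$ acts as a MHS morphism; this is the canonicity claim.

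Next I would assemble the chain of MHS identifications. The isomorphism $H^{2n-i}_{c}(F_{0},\psi_{f}\Q_{\CN})\cong H_{i}(\U^{c})$ for $i<n$ is by construction of the MHS in Theorem \ref{t1.5}. To promote Theorem \ref{t1.2} to the MHS level, I would verify that every building block of the comparison map is a MHM morphism: the nearby cycle distinguished triangle, the base change identification with the cohomology of a generic fibre, and the adjunctions for the open embedding $\U\hookrightarrow\CN$. For the Milnor fibre $F_{h}$, I would combine the natural algebraic map $\U^{c}\to F_{\widetilde{f}}$ supplied by the transversal pair construction of section 6.2 (a homotopy $(n+1)$-equivalence) with the identification $F_{\widetilde{f}}\simeq F_{h}$ that follows from transversality at infinity, via the standard degeneration of a polynomial to its leading form when the hyperplane at infinity is generic. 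Both maps are algebraic, hence induce MHS morphisms on cohomology, and by the homotopy equivalence they are MHS isomorphisms for $i<n$. For $i=n$ the same diagrams yield the three surjective MHS morphisms of the statement, and independence of $c$ for $i<n$ follows from the intrinsic nature of the middle term $H^{2n-i}_{c}(F_{0},\psi_{f}\Q_{\CN})$.

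The principal obstacle I expect is checking that the MHS constructed here coincides with the Dimca-Libgober MHS of \cite{DL}. Their structure is defined via pair cohomology on a log resolution of the projective completion $V\cup H$, which is rather different from the nearby-cycle construction presented here. I would resolve this by routing both MHS through the Milnor fibre $F_{\widetilde{f}}$, on which the Dimca-Libgober structure is known to coincide with Saito's standard MHS on the Milnor fibre cohomology of a homogeneous polynomial. The MHS isomorphism $H_{i}(\U^{c})\cong H_{i}(F_{\widetilde{f}})$ obtained above then forces agreement of the two MHS on $H_{i}(\U^{c})$, since an MHS morphism that is the identity on the underlying $\Q$-vector space is automatically an isomorphism of MHS.
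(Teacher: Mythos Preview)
Your overall strategy---route everything through $F_{\widetilde{f}}$ and use Saito's theory---matches the paper's, but the execution has a genuine gap at the point where you try to promote Theorem~\ref{t1.2} to a MHS statement directly.  The proof of Theorem~\ref{t1.2} passes through the analytic tube $T_{0}=f^{-1}(D_{0})$ and the identification $H^{2n+2-i}_{c}(T_{0}^{\ast},\K)\cong H_{i}(F)$, which comes from the $C^{\infty}$ fibration over the punctured disc rather than from an algebraic map.  These are not ``building blocks'' that obviously lift to MHM, so your plan to ``verify that every building block is a MHM morphism'' does not go through as stated.  Similarly, your map $\U^{c}\to F_{\widetilde{f}}$ is not an algebraic map between algebraic varieties (the infinite cyclic cover $\U^{c}$ is not algebraic), so it does not automatically induce MHS morphisms; and the claim $F_{\widetilde{f}}\simeq F_{h}$ is only a homotopy $n$-equivalence, not an equivalence.

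The paper avoids these problems by exploiting the \emph{specific} transversal pair of Lemma~\ref{l6.2}, for which $f'=\widetilde{f}$ is homogeneous.  This has two consequences you do not use: first, the generic fibre $F'$ of $f'$ \emph{equals} the Milnor fibre $F_{\widetilde{f}}$, and for homogeneous polynomials the identification $H_{i}(F_{\widetilde{f}})\cong H^{2n+2-i}_{c}(F_{0}',\psi_{f'}\Q)$ is already known to be a MHS isomorphism (see~\cite{BS}); second, all the comparison maps become honest algebraic hyperplane sections---$F_{c}\hookrightarrow F'_{c}=F_{\widetilde{f}}$, $F_{h}\hookrightarrow F_{\widetilde{f}}$, and $F_{0}\hookrightarrow F_{0}'$ (the last handled in Remark~\ref{r6.4})---hence automatically MHS morphisms, and Lefschetz gives the required isomorphism/surjection range.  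The Dimca--Libgober comparison is then read off diagram~(\ref{6.3}), using that their MHS is defined as the sub-MHS $i_{c}^{\ast}H^{i}(\U^{c})\subset H^{i}(F)$ (not via a log resolution as you wrote), which that diagram identifies with $u'^{\ast}H^{i}(F')$.
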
 

\textbf{Note.} The main results proved in this paper are actually true in a more general setting 
(see \cite{DL}).

\textbf{Acknowledgments.} I would like to express my deep gratitude to my adviser, Laurentiu Maxim, for encouragement and advice. I am grateful to Alexandru Dimca, Anatoly Libgober, J\"{o}rg Sch\"{u}rmann, Yun Su and Kaiho Wong for useful discussions, and the Mathematics Department at the University of Wisconsin-Madison for hospitality during the preparation of this work. The author is supported by China Scholarship Council (file No. 201206340046), and this work is part of the author's thesis.

\section{Preliminaries}
\subsection{Alexander modules}

Let $f=f(x_{1},\cdots,x_{n+1}):\CN \rightarrow \C  $ be a reduced degree $d$ polynomial map, and set $F_{0}=f^{-1}(0)$. 
We say that $f$ is {\it transversal at infinity} if the projective closure $V$ of $F_{0}$ in $\CP$ is transversal in the stratified sense to the hyperplane at infinity $H=\CP\setminus \CN=\lbrace x_{0}=0 \rbrace$. If $f$ is transversal at infinity, the affine hypersurface $F_{0}$ is homotopy equivalent to a bouquet of $n$-spheres, i.e.,  
$$F_{0}\sim \vee_{\mu} S^{n},$$ where $\mu$ denotes the number of spheres in the above join (cf. \cite[page 476]{DP}). It is shown in loc.cit. that $\mu$ can be determined topologically as the degree of the gradient map associated to the homogenization $\tilde{f}$ of $f$.

  Let $\U$ be the affine hypersurface complement: $\U=\CN\setminus F_{0} $, and assume that $V$ has $r$ irreducible components. Then $H_{1}(\U,\Z)={\Z}^{r}$ (\cite{D1},(4.1.3),(4.1.4)), generated by meridian loops $ \gamma_{i} $ about each irreducible component $ V_{i}$ of $V$, $1\leq i\leq r $. 
  
   We have a surjective map: $\pi_{1}(\U)\rightarrow \pi_{1}(\C^{\ast})=\Z  $ induced by $f$, and consider the corresponding infinite cyclic cover $\U^{c}$ of $\U$. Then, under the deck group action, every homology group $H_{i}(\U^{c},\Q)$ becomes a $\Gamma:=\Q[t,t^{-1}]$-module.
\begin{definition}  The $\Gamma$-module $ H_{i}({\U}^{c},\Q)$  is called the \textit{$i$-th Alexander module of the hypersurface complement $\U$}. 
\end{definition}
When $H_{i}(\U^{c})$ is torsion as $\Gamma$-module, we denote by $\delta_{i}(t)$ the corresponding Alexander polynomial. If $H_{i}(\U^{c})=0$, the corresponding Alexander polynomial is $1$ by convention. Since $\U$ has the homotopy type of a finite $(n+1)$-dimensional CW complex, $H_{i}(\U^{c})=0$ for $i>n+1$ and $H_{n+1}(\U^{c})$ is a free $\Gamma$-module. Hence $H_{i}(\U^{c})$ is interesting only for $0\leq i\leq n$.   
   
   \begin{thm}(\cite{LM2}, Theorem (3.6), (4.1))  Assume that the polynomial $f: \CN\rightarrow \C$ is transversal at infinity.  Then $H_{i}({\U}^{c}) $ is a finitely generated semi-simple torsion $\Gamma$-module for $ 0 \leq i\leq n$, and the roots of the corresponding Alexander polynomial $\delta_{i}(t)$ are roots of unity of order $d$.
 \end{thm}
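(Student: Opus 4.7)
The statement breaks into three assertions — finite generation, semi-simple torsion, and the root constraint on $\delta_{i}(t)$. My plan is to reduce all three to the corresponding properties of the Milnor fibre $F_{\widetilde{f}}$ of the homogenization of $f$, exploiting the fact that its monodromy has finite order.

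For finite generation, the input is that $\U = \CN\setminus F_{0}$ is a smooth affine variety of complex dimension $n+1$, and hence (by Andreotti--Frankel) has the homotopy type of a finite CW complex of real dimension at most $n+1$. The cellular chain complex of the infinite cyclic cover $\U^{c}$ is therefore a bounded complex of finitely generated free $\Gamma$-modules, so each $H_{i}(\U^{c})$ is automatically finitely generated over the Noetherian PID $\Gamma = \Q[t, t^{-1}]$.

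For the torsion, semi-simplicity, and root assertions, the strategy is to compare $\U^{c}$ to the Milnor fibre $F_{\widetilde{f}} = \widetilde{f}^{-1}(1) \subset \C^{n+2}$ of the degree-$d$ homogenization $\widetilde{f}$ of $f$ with respect to $H$. The open inclusion $\U \hookrightarrow \C^{n+2}\setminus\widetilde{f}^{-1}(0)$, $x \mapsto (1,x)$, lifts to a map $\U^{c} \to F_{\widetilde{f}}$, and under the transversality-at-infinity hypothesis this lifted map should be a homotopy $(n+1)$-equivalence. (This is essentially Proposition 4.9 of \cite{LM2}; it is also re-derived in Section 6.2 of the present paper via the transversal pair construction.) Granting this, one obtains $\Gamma$-module isomorphisms
\begin{equation*}
H_{i}(\U^{c}) \cong H_{i}(F_{\widetilde{f}}) \quad \text{for } i \leq n,
\end{equation*}
where the $\Gamma$-action on the right is induced by the Milnor monodromy of $\widetilde{f}$. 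Since $\widetilde{f}$ is homogeneous of degree $d$, this monodromy is induced by scalar multiplication by a primitive $d$-th root of unity and therefore has order dividing $d$. Consequently $t^{d}-1$ annihilates $H_{i}(F_{\widetilde{f}})$, which is then a finite-dimensional module over $\Gamma/(t^{d}-1) \cong \prod_{\zeta^{d}=1}\Q(\zeta)$. This latter ring is a finite product of fields, hence semi-simple; so $H_{i}(\U^{c})$ is a semi-simple torsion $\Gamma$-module whose associated $\delta_{i}(t)$ divides a power of $t^{d}-1$, giving the root condition.

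The main obstacle is the $(n+1)$-equivalence $\U^{c} \to F_{\widetilde{f}}$. This is the only non-formal input, and it is exactly where transversality at infinity enters in an essential way: intuitively, transversality guarantees that enlarging $F_{0}$ by the hyperplane $H$ at infinity does not alter the topology of the cyclic cover in degrees $\leq n$, but making this rigorous requires a careful local analysis along $V\cap H$ (carried out by Maxim in \cite{LM2} using intersection homology, and rephrased in this paper via the transversal pair construction of Section 6.1).
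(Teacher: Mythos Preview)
Your proposal is correct and coincides with the paper's own re-derivation of this result. The theorem is quoted from \cite{LM2} and is not proved where it appears in Section~2; however, the paper recovers it in Corollary~\ref{c6.5} by precisely your route: the transversal-pair construction of Lemma~\ref{l6.2} plus the Lefschetz hyperplane section theorem yield the $\Gamma$-module isomorphisms $H_i(\U^c)\cong H_i(F_{\widetilde f})$ for $i\le n$ (your map $x\mapsto(1,x)$ is, after the coordinate change in the proof of Corollary~\ref{c6.5}, exactly the hyperplane section inclusion), and the finite-order monodromy of the homogeneous $\widetilde f$ then gives semi-simplicity and the annihilation by $t^d-1$. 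Your finite-generation argument via the finite CW model of the affine variety $\U$ is standard and correct.

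It is worth noting that Maxim's original proofs in \cite{LM2} follow a different path: torsion (Theorem~3.6 there) comes from an Artin-type vanishing for $IC^\bullet_{\overline m}(\CP,\K)$, and the root constraint (Theorem~4.1) from a local analysis of the peripheral complex along the stratification of $V\cup H$. The Milnor-fibre reduction you use is more direct once the homotopy $(n{+}1)$-equivalence is available, but---as you rightly emphasise---that equivalence is the entire substance of the argument and is exactly where transversality at infinity does its work. One minor notational slip: over $\Q$ one has $\Gamma/(t^d-1)\cong\prod_{e\mid d}\Q(\zeta_e)$, the product being indexed by the cyclotomic factors (divisors of $d$), not by the individual $d$-th roots of unity; this does not affect your conclusion.
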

      
\begin{remark} 
  Maxim showed (\cite{LM2}) that $H_{0}(\U^{c})=\Gamma/(t-1)$, and $H_{n+1}({\U}^{c}) $ is a free $\Gamma$-module with rank $\vert\chi(\U)\vert$. In fact, $\chi(\U)=(-1)^{n+1}\mu$, with $H_{n+1}({\U}^{c})=\Gamma^{\mu} $.
\end{remark}

 \subsection{Local system}
  Define a local system $\K $ on $\U$ with stalk $\Gamma$ by the composed map: \begin{center}
$ \pi_{1}(\U) \overset {f_{\ast}}{\to} \pi_{1}(\C^{\ast})   \rightarrow   Aut(\Gamma) $,     
\end{center} with the second map being given by  $1_{\Z}\mapsto t$. 
Here $t$ is the automorphism of $\Gamma$ given by multiplication by $t$. Then we have the following $\Gamma$-module isomorphisms (\cite{LM2}, Corollary 3.4):  \begin{center}
 $H^{2n+2-i}_{c}(\U,\K)\cong H_{i}(\U,\K)\cong H_{i}(\U^{c}) $ for all $i$.
\end{center} 

\subsection{The peripheral complex}
 
By choosing a Whitney stratification of $V$, and using the hyperplane at infinity $H$, we obtain a stratification of the pair $(\CP, V \cup H)$.  Then, for any perversity function $\overline{p}$, the intersection homology complex $IC_{\overline{p}}^{\bullet}(\CP,\K)$ is defined by using Deligne's axiomatic construction (see \cite{B}, \cite{GM1} and \cite{GM2}). In this paper, we mainly use the indexing conventions from \cite{GM2}. In particular, we have the following normalization property: $IC_{\overline{p}}^{\bullet}(\CP,\K)|_{\U}\cong \K[2n+2]$.

\begin{thm} \label{t2.4} (\cite{LM2}, Lemma 3.1) Assume that the polynomial $f: \CN\rightarrow \C$ is transversal at infinity. Let $j$ be the inclusion of $\U$ into $\CP$. Then we have the following quasi-isomorphisms in $D^{b}_{c}(\CP,\Gamma)$: \begin{equation} \label{2.1}
IC_{\overline{m}}^{\bullet}(\CP,\K)\cong j_{!}\K[2n+2];
\end{equation} 
 \begin{equation}
 IC_{\overline{l}}^{\bullet}(\CP,\K)\cong Rj_{\ast}\K[2n+2],
 \end{equation}
 where the middle and logarithmic perversities are defined as: $\overline{m}(s)=[(s-1)/2]$ and $\overline{l}(s)=[(s+1)/2]$. (Note that $\overline{m}(s)+\overline{l}(s)=s-1$, i.e., $\overline{m}$ and $\overline{l}$ are superdual perversities in the sense of \cite{CS}.)
  \end{thm}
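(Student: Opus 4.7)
The plan is to verify Deligne's axiomatic characterization of the intersection cohomology complex $IC_{\overline{p}}^{\bullet}(\CP,\K)$ for the perversities $\overline{p}=\overline{m}$ and $\overline{p}=\overline{l}$. I would begin by fixing a Whitney $b$-regular stratification of $V$; transversality at infinity allows this to be extended to a Whitney stratification of the pair $(\CP, V\cup H)$ in which $H$ meets every stratum of $V$ transversally, so the strata of $V\cap H$ and the finite strata of $V$ are compatibly stratified. Both complexes $j_!\K[2n+2]$ and $Rj_*\K[2n+2]$ restrict on $\U$ to $\K[2n+2]$, which verifies the normalization axiom immediately.

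For the first identification, I would verify that $j_!\K[2n+2]$ satisfies the support and attaching (costalk) conditions for the middle perversity $\overline{m}$. The support condition is automatic, since the stalks of $j_!\K[2n+2]$ vanish on $V\cup H$. The attaching condition, which via Deligne's iterative formula amounts to the vanishing of $\tau_{\leq \overline{m}(c)-(2n+2)}Rj_*\K[2n+2]$ along each stratum of complex codimension $c$, reduces to a vanishing statement for the local cohomology of a link in $\U$ with $\K$-coefficients. At a smooth point of an irreducible component $V_i$ of $V$, the meridian acts on the stalk $\Gamma$ by multiplication by $t$ (since $f$ is reduced and has linking number $+1$ with each $V_i$), and at a smooth point of $H$ it acts by multiplication by $t^{-d}$ with $d=\deg f$. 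Since both $t-1$ and $t^d-1$ act injectively on $\Gamma=\Q[t,t^{-1}]$, the local monodromy invariants vanish, giving the required stalk vanishing in low degrees. At lower-dimensional strata, transversality at infinity yields a local product structure on normal slices, and combined with Artin-type vanishing for cohomology of affine open sets with constructible $\Gamma$-coefficients it gives the required vanishing in the full range, so $j_!\K[2n+2]\cong IC_{\overline{m}}^{\bullet}(\CP,\K)$.

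The second identification $Rj_*\K[2n+2]\cong IC_{\overline{l}}^{\bullet}(\CP,\K)$ would follow from the first by Verdier duality. Since $\CP$ is a smooth complex projective manifold with dualizing complex $\Q_{\CP}[2n+2]$, the Verdier dual of $j_!\K[2n+2]$ is $Rj_*\K^{\vee}[2n+2]$, where $\K^{\vee}$ is the local system dual to $\K$. On the intersection-cohomology side, the superduality relation $\overline{m}(s)+\overline{l}(s)=s-1$ in the Cappell-Shaneson sense exchanges $IC_{\overline{m}}^{\bullet}(\CP,\K)$ with $IC_{\overline{l}}^{\bullet}(\CP,\K^{\vee})$ under Verdier duality. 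Dualizing the first quasi-isomorphism and identifying $\K$ with $\K^{\vee}$ via the involution $t\mapsto t^{-1}$ of $\Gamma$ (which corresponds to traversing the infinite cyclic cover in the opposite direction) yields the second quasi-isomorphism. The main obstacle is verifying the attaching condition along the strata in $V\cap H$, where both the normal direction to $V$ and the normal direction to $H$ are present and could in principle interact; transversality at infinity is precisely the hypothesis that decouples these two directions via a product slice, reducing the verification to one-dimensional calculations on each factor.
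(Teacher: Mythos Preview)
The paper does not give its own proof of this theorem: it is quoted verbatim from \cite{LM2}, Lemma~3.1, and no argument is supplied. Your proposal therefore cannot be compared to a proof in the present paper, but it is the natural approach and is essentially how the result is established in \cite{LM2}. Verifying Deligne's axioms for $j_!\K[2n+2]$, using that the stalk condition is vacuous and that the attaching condition reduces to the injectivity of $t-1$ (resp.\ $t^{-d}-1$) on $\Gamma$ at generic points of $V$ (resp.\ $H$), and then invoking Cappell--Shaneson superduality to pass from $\overline{m}$ to $\overline{l}$, is exactly the standard route.

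Two small points are worth flagging. First, in the Goresky--MacPherson convention used here the perversity is a function of \emph{real} codimension, so along a stratum of complex codimension $c$ the relevant truncation degree is $\overline{m}(2c)-(2n+2)=c-1-(2n+2)$, not $\overline{m}(c)-(2n+2)$; this is only a notational slip. Second, your treatment of the lower-dimensional strata is compressed: invoking ``Artin-type vanishing'' is the right instinct, but the actual argument in \cite{LM2} computes the stalk of $Rj_*\K$ at a point of a deeper stratum as the cohomology of the local link complement with $\K$-coefficients, and uses that this complement fibers over $S^1$ with Milnor fiber having the homotopy type of a CW complex of real dimension at most $c-1$ (for strata in $V\setminus H$), together with the product structure coming from transversality for strata in $V\cap H$. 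Neither point is a genuine gap, but the second deserves a sentence more of detail if you write this out.
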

  With the above assumptions and notations, the Cappell-Shaneson superduality isomorphism
holds, i.e., one has (\cite{CS}, Theorem 3.3):\begin{center}
$IC_{\overline{m}}^{\bullet}(\CP,\K)^{op}\cong \mathcal{D} (IC_{\overline{l}}^{\bullet}(\CP,\K))[2n+2] $,
\end{center} where if $A^{\bullet}$ is a complex of sheaves, $\mathcal{D} A^{\bullet}$ denotes its Verdier dual. Here $A^{op}$ is the $\Gamma$-module obtained from $\Gamma$-module $A$ by composing all module structures with the involution $t\rightarrow t^{-1}$.

\begin{definition}
The {\it peripheral complex} $\R \in D_{c}^{b}(\CP, \Gamma) $ is defined by the distinguished triangle (see \cite{CS}) \begin{equation}
 IC_{\overline{m}}^{\bullet}(\CP,\K)\rightarrow IC_{\overline{l}}^{\bullet}(\CP,\K)\rightarrow \R[2n+2] \overset {[1]}{\to},
  \end{equation} or, by using Theorem \ref{t2.4}, by \begin{equation}
  j_{!}\K \rightarrow Rj_{\ast}\K \rightarrow \R  \overset {[1]}{\to}.
  \end{equation}
  \end{definition}
Then, up to a shift,  $\R$ is a self dual (i.e., $ \R \cong \mathcal{D}\mathcal{R}^{\bullet op}[-2n-1] $), torsion (i.e., the stalks of its cohomology sheaves are torsion modules) perverse sheaf on $\CP$ (see \cite{LM2}, section 3.2). In fact, $\R$ has compact support on $V\cup H$, and $\R\vert _{V\cup H} \cong  (Rj_{\ast} \K)\vert_{V\cup H}$. 
\begin{remark} $\R$ as defined here equals $\R[-2n-2]$  as defined by Cappell and Shaneson, see \cite{CS} or \cite{LM2}. 
\end{remark}

 \subsection{The Sabbah specialization complex} The Sabbah specialization complex  (see \cite{Sa}, and its reformulation in \cite{Bu2}) can be regarded as a generalization of Deligne's nearby cycle complex. For a quick introduction to the theory of nearby cycles, the reader is advised to consult \cite{D2} and \cite{Ma}. 
 
In our notations, consider the following commutative diagram of spaces and maps:\begin{equation} \label{2.5}
\xymatrix{
F_{0} \ar[r]^{i} \ar[d]^{f}   & \CN  \ar[d]^{f}   & \ar[l]_{l} \U \ar[d]^{f}  & \ar[l]_{\pi} \U^{c} \ar[d]^{\widehat{f}} \\
 \lbrace 0  \rbrace   \ar[r]          & \C    & \ar[l] \C^{\ast}                                            
            & \ar[l]_{\widehat{\pi}} \widehat{\C^{\ast}} 
}
\end{equation}
where $\widehat{\pi}$ is the universal covering of the punctured disk $\C^{\ast}$, and the right hand square of the diagram is cartesian.

\begin{definition} \label{d2.7}  The Sabbah specialization complex functor  of $f$ is defined by \begin{center}
$\psi^{S}_{f} =i^{\ast}Rl_{\ast} R\pi_{!} (l \circ \pi)^{\ast} : D^{b}_{c}(\CN,\Q) \rightarrow D^{b}_{c}(F_{0},\Gamma)$, 
\end{center} and we call $\psi^{S}_{f} \Q_{\CN}$ the Sabbah specialization complex.
\end{definition}
\begin{remark}  
 The definition of Sabbah's specialization complex  is slightly different from that of the nearby cycle complex of Deligne, where $R\pi_{!}$ is replaced by $R\pi_{\ast}$. 
\end{remark}

Consider the natural forgetful functor \begin{center}
$\mbox{ for}:  D^{b}_{c}(F_{0},\Gamma)\rightarrow D^{b}_{c}(F_{0},\Q)$,
\end{center}  which maps a torsion $\Gamma$-module sheaf complex to its underlying $\Q$-complex. Let $\psi_{f}\Q_{\CN}$ be the Deligne nearby cycle complex associated to $f$. It is known that one has a non-canonical isomorphism (\cite{Br}, page 13): \begin{center}
$\mbox{ for}\circ \psi^{S}_{f} (\Q_{\CN}) \cong  \psi_{f}\Q_{\CN} [-1]$.
\end{center} 
For short, in the following we write $\Q$ for the constant sheaf $\Q_{\CN}$.

\begin{lem}(\cite{Bu2}, Lemma 3.4 (b)) Let $i$ and $l$ be the inclusions of $F_{0}$ and $\U$ into $\CN$ as in the diagram (\ref{2.5}). Then, for $\K$ as in section 2.2, \begin{center}
$\psi^{S}_{f}\Q\cong i^{\ast} Rl_{\ast}\K$. 
\end{center} Moreover, we have the following distinguished triangle in $D^{b}_{c}(\CN,\Gamma)$: \begin{equation} \label{2.6}
l_{!}\K \rightarrow Rl_{\ast}\K \rightarrow   i_{!}\psi^{S}_{f}\Q \overset {[1]}{\to}.
\end{equation}
\end{lem}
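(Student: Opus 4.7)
The plan is to handle the two assertions separately, with the first (the identification of $\psi^{S}_{f}\Q$) doing most of the work, and the second (the distinguished triangle) following from the standard attaching triangle for the closed-open decomposition $F_{0}\hookrightarrow\CN\hookleftarrow\U$ applied to the complex $Rl_{\ast}\K$.

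First I would show that $R\pi_{!}(l\circ\pi)^{\ast}\Q\cong\K$ on $\U$. Since $(l\circ\pi)^{\ast}\Q=\Q_{\U^{c}}$, this reduces to identifying $R\pi_{!}\Q_{\U^{c}}$ with the local system $\K$ of section 2.2. The map $\pi:\U^{c}\to\U$ is a Galois covering with discrete deck group $\Z$, and each fibre is a discrete copy of $\Z$; hence $R\pi_{!}\Q_{\U^{c}}$ is concentrated in degree zero with stalk $\bigoplus_{\Z}\Q=\Q[t,t^{-1}]=\Gamma$. I then need to verify that the monodromy of this local system around a loop $\gamma\in\pi_{1}(\U)$ is multiplication by $t^{f_{\ast}(\gamma)}$: this is immediate because the deck transformation corresponding to the generator $1_{\Z}$ permutes the (compactly supported) basis of the stalk by translation, which agrees precisely with multiplication by $t$ under the identification with $\Gamma$. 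This matches the definition of $\K$, so $R\pi_{!}\Q_{\U^{c}}\cong\K$. Substituting into the definition of $\psi^{S}_{f}$ then gives
\begin{equation*}
\psi^{S}_{f}\Q=i^{\ast}Rl_{\ast}R\pi_{!}(l\circ\pi)^{\ast}\Q\cong i^{\ast}Rl_{\ast}\K,
\end{equation*}
which is the first claim.

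For the distinguished triangle, I would start with the standard attaching triangle for the open inclusion $l:\U\hookrightarrow\CN$ with closed complement $i:F_{0}\hookrightarrow\CN$: for any $\F\in D^{b}_{c}(\CN,\Gamma)$ one has
\begin{equation*}
l_{!}l^{\ast}\F\rightarrow\F\rightarrow i_{\ast}i^{\ast}\F\overset{[1]}{\to}.
\end{equation*}
Apply this with $\F=Rl_{\ast}\K$. Since $l$ is an open inclusion, $l^{\ast}Rl_{\ast}\K=\K$, so the first term becomes $l_{!}\K$. Since $i$ is a closed inclusion, $i_{\ast}=i_{!}$, and by the first part of the lemma $i^{\ast}Rl_{\ast}\K\cong\psi^{S}_{f}\Q$, so the third term becomes $i_{!}\psi^{S}_{f}\Q$. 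This produces exactly the required triangle.

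The main subtle point, and the place where I would be most careful, is the identification of the $\Gamma$-module structure on $R\pi_{!}\Q_{\U^{c}}$ with that of $\K$: one must check that the natural action coming from the deck group (used in forming the pushforward) is compatible with the action by multiplication by $t$ that defines $\K$ via the composition $\pi_{1}(\U)\to\pi_{1}(\C^{\ast})=\Z\to\mathrm{Aut}(\Gamma)$. Once that compatibility is confirmed, the rest is formal manipulation using the base-change-free identities $l^{\ast}Rl_{\ast}=\mathrm{id}$ and $i_{\ast}=i_{!}$.
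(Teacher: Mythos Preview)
The paper does not supply its own proof of this lemma; it is simply quoted from Budur's paper \cite{Bu2} (Lemma~3.4(b)) without argument. Your proof is correct and is the standard one: the identification $R\pi_{!}\Q_{\U^{c}}\cong\K$ via the deck-group action on the compactly supported sections over the discrete fibre $\Z$ is exactly how this local system arises, and once that is in hand the distinguished triangle is immediate from the attaching triangle for the open--closed decomposition applied to $Rl_{\ast}\K$.
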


Then we can identify the Sabbah specialization complex with the restriction of the peripheral complex to $F_{0}$:
\begin{lem} \label{l2.9}  We have a quasi-isomorphism in $D^{b}_{c}(F_{0},\Gamma)$: $\R\vert_{F_{0}}\cong \psi^{S}_{f}{\Q}$.  
\end{lem}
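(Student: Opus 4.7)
The plan is to obtain the isomorphism by restricting the defining triangle of the peripheral complex from $\CP$ down to $\CN$ and comparing with the triangle (\ref{2.6}) coming from the Sabbah specialization complex, and then restricting further to $F_{0}$.

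First, let $k: \CN \hookrightarrow \CP$ denote the open inclusion of the affine chart, so that $j = k \circ l$ factors through $\CN$. Applying $k^{\ast}$ to the defining triangle
\begin{equation*}
j_{!}\K \rightarrow Rj_{\ast}\K \rightarrow \R \overset{[1]}{\to}
\end{equation*}
and using that $k^{\ast}j_{!}\K \cong l_{!}\K$ and $k^{\ast}Rj_{\ast}\K \cong Rl_{\ast}\K$ (the latter because $k$ is an open immersion, so $k^{\ast}Rk_{\ast} \cong \mathrm{id}$, combined with $Rj_{\ast} \cong Rk_{\ast}Rl_{\ast}$), one obtains on $\CN$ the distinguished triangle
\begin{equation*}
l_{!}\K \rightarrow Rl_{\ast}\K \rightarrow \R|_{\CN} \overset{[1]}{\to}.
\end{equation*}

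Next, compare this with the triangle (\ref{2.6}). Both triangles start with the same canonical adjunction morphism $l_{!}\K \rightarrow Rl_{\ast}\K$, so by the axioms of the triangulated category the two cones are (non-canonically) isomorphic, giving
\begin{equation*}
\R|_{\CN} \cong i_{!}\psi^{S}_{f}\Q
\end{equation*}
in $D^{b}_{c}(\CN,\Gamma)$. As an internal sanity check, this is consistent with the fact noted after the definition of $\R$ that the peripheral complex has support in $V \cup H$: restricting to $\CN$ cuts off the piece over $H$, leaving a complex supported on $V \cap \CN = F_{0}$.

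Finally, apply $i^{\ast}$ to both sides and use that $i$ is a closed immersion, so $i^{\ast}i_{!} = i^{\ast}i_{\ast} \cong \mathrm{id}$. This yields
\begin{equation*}
\R|_{F_{0}} = i^{\ast}\R|_{\CN} \cong i^{\ast}i_{!}\psi^{S}_{f}\Q \cong \psi^{S}_{f}\Q,
\end{equation*}
which is the desired quasi-isomorphism in $D^{b}_{c}(F_{0},\Gamma)$. The only mildly subtle point is ensuring that the morphism $l_{!}\K \to Rl_{\ast}\K$ appearing after restricting the peripheral triangle coincides with the canonical adjunction morphism used in (\ref{2.6}); this is where one must invoke the naturality of the adjunction unit/counit (so that $k^{\ast}$ sends the adjunction $j_{!} \to Rj_{\ast}$ to the adjunction $l_{!} \to Rl_{\ast}$), which is standard and presents the main, though minor, obstacle.
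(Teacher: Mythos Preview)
Your proof is correct. The paper's argument is slightly more streamlined: rather than comparing cones of two distinguished triangles, it uses directly the fact (recorded right before Definition~\ref{d2.7} in the text) that $\R|_{V\cup H}\cong (Rj_{\ast}\K)|_{V\cup H}$, since $j_{!}\K$ vanishes off $\U$. From there one simply computes
\[
\R|_{F_{0}}\cong (Rj_{\ast}\K)|_{F_{0}}\cong i^{\ast}k^{\ast}Rk_{\ast}Rl_{\ast}\K\cong i^{\ast}Rl_{\ast}\K\cong \psi^{S}_{f}\Q,
\]
invoking $k^{\ast}Rk_{\ast}\cong\mathrm{id}$ and the previous lemma. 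Your route via cone comparison reaches the same conclusion and uses the same ingredients ($j=k\circ l$, $k^{\ast}Rk_{\ast}\cong\mathrm{id}$), but incurs the extra bookkeeping of matching the adjunction morphisms, which you correctly flag. The paper's version sidesteps that issue entirely by never needing to identify the first arrow in two triangles.
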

\begin{proof}
Let $j$ and $k$ be the inclusion of $\U$ and respectively $\CN$ into $\CP$.  
Then $j=k\circ l$, hence $Rj_{\ast}= Rk_{\ast}\circ Rl_{\ast}$. Note that $k^{\ast}Rk_{\ast}=id$, so
\begin{center}
$\R\vert _{F_{0}} \cong  (Rj_{\ast} \K)\vert_{F_{0}}  \overset{(1)}{\cong} i^{\ast} k^{\ast} Rj_{\ast}\K\cong i^{\ast} k^{\ast} Rk_{\ast}Rl_{\ast}\K 
\cong  i^{\ast} Rl_{\ast} \K  \overset{(2)}{\cong} \psi^{S}_{f}{\Q}$
\end{center}
where (1) follows from the composition of pull back functors and (2) comes from previous lemma.
\end{proof}

\section{Realizing Alexander Modules by the Sabbah Specialization Complex}
In this section, we prove Theorem \ref{t1.1}, in which we realize the Alexander modules by Sabbah specialization complex. This result plays an important role in our follow-up paper \cite{LiM}, in which we obtain several polynomial identities, e.g.,  relating the top Alexander polynomial of the complement $\U$ with the intersection pairing on the boundary manifold of $\U$. 

The following lemma plays a key role in the proof of Theorem \ref{t1.1}, and follows from a base change isomorphism induced by the transversality of $f$ at infinity.
\begin{lem}\label{l3.1} Assume that the polynomial $f: \CN\rightarrow \C$ is transversal at infinity. Let  $l$ be the inclusion of $\U$ into $\CN$.  Then $H_{c}^{i}(\CN, Rl_{\ast}\K)=0$ for $i \neq n+1$, and $H_{c}^{n+1}(\CN, Rl_{\ast}\K)$ is a free $\Gamma$-module. 
\end{lem}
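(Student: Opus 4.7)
The plan is to compute $H^i_c(\CN, Rl_*\K)$ by applying $R\Gamma_c(\CN,-)$ to the distinguished triangle (\ref{2.6}) and combining this with Artin vanishing on the affine varieties $\CN$ and $F_0$, then passing to the compactification $\CP$ to exploit the transversality at infinity via the peripheral complex $\R$.

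The long exact sequence for the triangle $l_!\K\to Rl_*\K\to i_!\psi^S_f\Q\overset{[1]}{\to}$ reads
\[
\cdots\to H^i_c(\U,\K)\to H^i_c(\CN, Rl_*\K)\to H^i_c(F_0,\psi^S_f\Q)\to H^{i+1}_c(\U,\K)\to\cdots.
\]
Under the forgetful functor, $\psi^S_f\Q$ is $\psi_f\Q[-1]$, so $\psi^S_f\Q[n+1]$ corresponds to the perverse sheaf $\psi_f\Q[n]$ on $F_0$ (since $\psi_f[-1]$ preserves perversity and $\Q_{\CN}[n+1]$ is perverse on smooth $\CN$). Artin vanishing for perverse sheaves on the affine variety $F_0$ of complex dimension $n$ then gives $H^i_c(F_0,\psi^S_f\Q)=0$ for $i<n+1$. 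Combined with $H^i_c(\U,\K)\cong H_{2n+2-i}(\U^c)=0$ for $i<n+1$ (as $\U^c$ has the homotopy type of a finite $(n+1)$-dimensional CW complex), this yields $H^i_c(\CN, Rl_*\K)=0$ for $i<n+1$.

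For the upper vanishing and the freeness, I pass to $\CP$. Let $k:\CN\to\CP$ be the open inclusion with closed complement $k':H\to\CP$, so $H^i_c(\CN, Rl_*\K)=H^i(\CP, k_!Rl_*\K)$. The recollement triangle applied to $Rk_*Rl_*\K=Rj_*\K$, together with $j_!\K|_H=0$ and the peripheral complex triangle of Definition 2.5 (which gives $(Rj_*\K)|_H\cong\R|_H$ via Lemma \ref{l2.9}), produces a triangle
\[
k_!Rl_*\K\to Rj_*\K\to k'_*(\R|_H)\overset{[1]}{\to}
\]
on $\CP$. The transversality of $V$ with $H$ forces $V\cap H$ to be smooth of codimension one in $H\cong\CP^n$ and gives $\CP$ a local product structure near $H$ compatible with $V$. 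A base change computation based on this product structure identifies $\R|_H$ as a torsion $\Gamma$-module complex whose stalks involve $\Gamma/(t^d-1)$ away from $V\cap H$, with K\"unneth behavior at $V\cap H$; in particular $H^*(H,\R|_H)$ is a torsion $\Gamma$-module. Taking $H^*(\CP,-)$ and combining with the known $H^i(\U,\K)=H^i(\CP, Rj_*\K)$ (computable via universal coefficients from $H_*(\U^c)$ over the PID $\Gamma$) forces $H^i_c(\CN, Rl_*\K)=0$ for $i>n+1$, and freeness at $i=n+1$ because the free part $\Gamma^\mu=H_{n+1}(\U^c)\cong H^{n+1}_c(\U,\K)$ injects, while the torsion contributions from $\R|_H$ and $H^{n+1}_c(F_0,\psi^S_f\Q)$ are absorbed by torsion terms in the sequence.

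The main obstacle I expect is the precise implementation of the base change induced by transversality: this requires a careful local computation at the transverse intersection $V\cap H$ to pin down $\R|_H$ explicitly and then tracking the connecting maps in the long exact sequence on $\CP$ to separate the free and torsion parts. The self-duality $\mathcal{D}\R\cong\R^{op}[2n+1]$ from Section 2.3, together with $k'^!=k'^*[-2]$ for the smooth closed embedding $k'$, should give additional symmetry to $\R|_H$ that constrains these connecting maps and delivers both vanishings simultaneously.
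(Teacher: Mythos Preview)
Your argument for the lower vanishing ($i<n+1$) is correct and actually differs from the paper's: you combine the triangle (\ref{2.6}) with Artin vanishing for the perverse sheaf $\psi_f\Q[n]$ on the affine $F_0$ and the CW bound $H_{>n+1}(\U^c)=0$, whereas the paper obtains this range (and the freeness) via Verdier duality and Artin vanishing for $l_!\K^{op}[n+1]$ on $\CN$.

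The genuine gap is in the upper range $i>n+1$ and the freeness at $i=n+1$. Your route through $\R|_H$ and the connecting maps on $\CP$ is not carried out, and it is unclear it can be made clean: to control those connecting maps you would still need the local product structure along $H$ in precise form, and the claim that ``torsion contributions \ldots\ are absorbed'' is exactly what has to be proved. The paper sidesteps all of this with a single base change of a different shape. Using the square of open inclusions
\[
\xymatrix{ \U \ar[r]^-{l} \ar[d]_-{k'} & \CN \ar[d]^-{k} \\ \CP\setminus V \ar[r]^-{l'} & \CP }
\]
transversality of $V$ and $H$ gives (Sch\"urmann, Lemma~6.0.5) $k_!\circ Rl_* \cong Rl'_*\circ k'_!$, hence $H^i_c(\CN,Rl_*\K)\cong H^i(\CP\setminus V,k'_!\K)$. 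Now $\CP\setminus V$ is affine of dimension $n+1$ and, restricting (\ref{2.1}), $k'_!\K[n+1]$ is perverse; Artin vanishing then kills $i\geq n+2$ in one line. The paper finishes by feeding this upper vanishing into the split universal-coefficient sequence relating $H^*_c(\CN,Rl_*\K)$ to $H^*(\CN,l_!\K^{op})$, and a second Artin vanishing on $\CN$ yields both the lower vanishing and the freeness at $n+1$. Your peripheral-complex-on-$H$ approach, if it could be pushed through, would amount to reproving this base change isomorphism stratum by stratum; invoking it directly on the complementary square is the missing idea.
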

\begin{proof} Consider the following commutative diagram of inclusions:
\begin{center}
$\xymatrix{
\U   \ar[r]^-l \ar[d]^{k^{\prime}}            &   \CP\setminus H=\CN \ar[d]^{k} \\
\CP\setminus V         \ar[r]^{l^{\prime}}       & \CP 
}$
\end{center}
There is a base change isomorphism associated with this diagram due to the fact that $V$ intersects $H$ transversally (\cite{Sc}, Lemma 6.0.5):
\begin{center}
$k_{!}\circ Rl_{\ast}=Rl^{\prime}_{\ast}\circ k^{\prime}_{!}$
\end{center}
By applying this base change isomorphism to $\K$ on $\U$, we have 
\begin{center}
$ H^{i}_{c}(\CN,Rl_{\ast}\K)\cong H^{i}(\CP,k_{!}Rl_{\ast}\K)\cong H^{i}(\CP,Rl^{\prime}_{\ast} k^{\prime}_{!}\K)\cong  H^{i}(\CP\setminus V, k^{\prime}_{!}\K) $.
\end{center} 

The quasi-isomorphism (\ref{2.1}), restricted over $\CP\setminus V$, gives: \begin{center}
$IC_{\overline{m}}(\CP\setminus V,\K)\cong k^{\prime}_{!}\K[2n+2]$.
\end{center}  Then $k^{\prime}_{!}\K[n+1] $ is a $\overline{m}$-perverse sheaf. Since $\CP\setminus V$ is an $(n+1)$-dimensional affine variety, we have the following vanishing results (\cite{Sc}, Corollary 6.0.4): \begin{center}
$H_{c}^{i}(\CN, Rl_{\ast}\K) \cong H^{i}(\CP\setminus V, k^{\prime}_{!}\K)=0 $ for $i\geq{n+2}$.
\end{center} So it remains to compute $H_{c}^{i}(\CN, Rl_{\ast}\K)$ for $i\leq n+1$.

There exist natural split exact sequences (\cite{B}, Lemma 3.4.3)
\begin{center}
$0 \rightarrow \text{Ext}(H^{i+1}_{c}(\CN, Rl_{\ast}\K),\Gamma)\rightarrow H^{-i}(\CN, \mathcal{D}(Rl_{\ast}\K))\rightarrow \text{Hom}(H^{i}_{c}(\CN, Rl_{\ast}\K),\Gamma)\rightarrow 0$
\end{center} where  $\mathcal{D}(Rl_{\ast}\K)\cong l_{!} \K^{op}[2n+2]$ by the Verdier duality.
Here $\K^{op}$ is the dual of local system $\K$, which is obtained by composing the representation of $\K$ with the involution $t\rightarrow t^{-1}$. Then
\begin{center}
 $H_{c}^{i}(\CN, Rl_{\ast}\K) \cong {\rm Torsion} (H^{2n+3-i}(\CN, l_{!}\K^{op}))\oplus {\rm Free} (H^{2n+2-i}(\CN, l_{!}\K^{op}))$.
\end{center} 

The quasi-isomorphism (\ref{2.1}), restricted over $\CN$, and composed with the involution, gives:  $IC_{\overline{m}}(\CN,\K^{op})\cong l_{!}\K^{op}[2n+2]$. Then $l_{!}\K^{op}[n+1] $ is also a $\overline{m}$-perverse sheaf. Since $\CN$ is an (n+1)-dimensional affine variety, we have the similar vanishing results: 
\begin{center}
$H^{j}(\CN, l_{!}\K^{op})=0  $ for $j\geq{n+2}$.
\end{center}
Putting all things together, we get that: $H_{c}^{i}(\CN, Rl_{\ast}\K)=0$ for $i<n+1$, while the $\Gamma$-module $H^{n+1}_{c}(\CN, Rl_{\ast}\K)$ is free.
\end{proof}

\medskip 
\textbf{Proof of Theorem \ref{t1.1}.} By applying the compactly supported hypercohomology  functor to the distinguished triangle $l_{!}\K \rightarrow Rl_{\ast}\K \rightarrow   i_{!}\psi^{S}_{f}\Q \overset {[1]}{\to}$ of (\ref{2.6}), we have the following long exact sequence: \begin{equation} \label{3.1}
 \cdots \rightarrow H^{2n+2-i}_{c}(\CN,l_{!}\K)\rightarrow H^{2n+2-i}_{c}(\CN,Rl_{\ast}\K) \rightarrow  H^{2n+2-i}_{c}(F_{0}, \psi^{S}_{f}\Q) \rightarrow \cdots
 \end{equation}

 Using Lemma \ref{l3.1}, we have that \begin{equation} \label{3.2}
  H_{c}^{2n+2-i}(\CN, Rl_{\ast}\K)=\left\{ \begin{array}{ll}
\Gamma^{\mu}, & i=n+1, \\
0, & \text{otherwise.}\\
\end{array}\right.
  \end{equation} where the isomorphism $H_{c}^{n+1}(\CN,Rl_{\ast}\K)=\Gamma^{\mu}$ is induced from the long exact sequence (\ref{3.1}). In fact, the Sabbah specialization complex is torsion; and \begin{center}
$H_{c}^{2n+2-i}(\CN,l_{!}\K)\cong H_{c}^{2n+2-i}(\U,\K)\cong H_{i}(\U,\K)\cong H_{i}(\U^{c})$,
\end{center} where $H_{i}(\U^{c})$ is torsion for $i< n+1$ and $H_{n+1}(\U^{c})=\Gamma^{\mu}$.

 Therefore we have the following $\Gamma$-module isomorphisms  for $i<n$, \begin{center}
$H_{i}({\U}^{c})\cong H^{2n+1-i}_{c}(F_{0}, \psi^{S}_{f}\Q)$
\end{center} and an exact sequence for $i=n$: \begin{equation} \label{3.3}
\xymatrix{
0  \ar[r] & \Gamma^{\mu}   \ar[r]   & \Gamma^{\mu} \ar[r]  & H^{n+1}_{c}(F_{0},\psi^{S}_{f}\Q)   \ar[r]  & H_{n}({\U}^{c})  \ar[r] & 0 
}
\end{equation}

\begin{remark} By using the Verdier duality on the equality (\ref{3.2}) and comping with the involution, we have: \begin{equation} \label{3.4}
H^{i}(\CN, l_{!}\K)=\left\{ \begin{array}{ll}
\Gamma^{\mu}, & i=n+1, \\
0, & \text{otherwise.}\\
\end{array}\right.
\end{equation}
 Similar results are also true for the corresponding extensions of $\K$ over $\CP\setminus V$.
\end{remark}

\section{Generic Fibre and the Sabbah Specialization Complex} 
It is well known that there exists a (minimal) finite bifurcation set $ B_{f}\subset\C $ such that $f$ is a locally trivial fibration over $\C\setminus{B_{f}} $. If $ c\in{\C\setminus {B_{f}}} $, then $F=f^{-1}(c)$ is called the generic fibre of $f $. Set $T_{b}=f^{-1}(D_{b})$, and $T_{b}^{\ast}= f^{-1}(D_{b}^{\ast})$, where $D_{b}$ is a sufficiently small open disk around $b\in B_{f}$, and $ D_{b}^{\ast}=D_{b}\setminus \lbrace b\rbrace $. 

If $0\notin B_{f}$, then $H_{i}(\U^{c})$ is a free $\Gamma$-module for all $i>0$ (\cite{DN}, 2.12), hence this case is not really interesting from the point of view of Alexander-type invariants. So, in this paper, we always assume $0\in B_{f}$, and set $B_{f}^{\ast}=B_{f}\setminus \lbrace 0\rbrace$.

\subsection{Proof of Theorem 1.2}

Consider the compactly supported cohomology long exact sequence induced by the inclusion of $T_{0}^{\ast}$ into $\U$: \begin{center}
$\cdots \rightarrow H^{2n+2-i}_{c}(T_{0}^{\ast},\K) \rightarrow H^{2n+2-i}_{c}(\U,\K) \rightarrow H^{2n+2-i}_{c}(\U\setminus T_{0}^{\ast},\K) \rightarrow \cdots$
\end{center}
Recall that \begin{center}
$H_{c}^{2n+2-i}(\U,\K)\cong H_{i}(\U,\K)\cong H_{i}(\U^{c})$.
\end{center} Since $f$ is a fibration over $D_{0}^{\ast}$  with fiber $F$, we have the similar isomorphisms:\begin{center}
$H_{c}^{2n+2-i}(T_{0}^{\ast},\K)\cong H_{i}(T_{0}^{\ast},\K)\cong H_{i}(F)$.
\end{center}
Then the above long exact sequence becomes the homology long exact sequence induced by the natural inclusion of $F$ into $\U^{c}$ (\cite{DN}). Theorem  2.10 in \cite{DN} shows that $H^{2n+2-i}_{c}(\U\setminus T_{0}^{\ast},\K)$ is a free $\Gamma$-module with rank $\sum_{b\in B_{f}^{\ast}} \dim H_{i}(T_{b},F)$  for all $i$.

If $f$ is transversal at infinity, Dimca and Libgober showed (\cite{DL}) that, for $b\in B_{f}^{\ast}$, $f^{-1}(b)$ has only isolated singularities, and $T_{b}$ is obtained from $F$ by adding $\mu_{b}$ (n+1)-cells, where $\mu_{b}$ is the sum of Milnor numbers of the singular points in $f^{-1}(b)$. Recall that for $f$ transversal at infinity,  $F_{0}\sim \vee_{\mu} S^{n}$.
It is easy to see that $\mu=\sum_{b\in B_{f}^{\ast}}\mu_{b}$, so  \begin{equation} \label{4.1}
H^{2n+2-i}_{c}(\U\setminus T_{0}^{\ast},\K)=\left\{ \begin{array}{ll}
\Gamma^{\mu}, & i=n+1, \\
0, & \text{otherwise.}\\
\end{array}\right.
\end{equation}

  Consider the compactly supported long exact sequence associated to the distinguish triangle $l_{!}\K \rightarrow Rl_{\ast}\K \rightarrow   i_{!}\psi^{S}_{f}\Q \overset {[1]}{\to}$. We have the following commutative diagram induced by the inclusion of $T_{0}$ into $\CN$: \begin{center}
$\xymatrix{
\ar[r] & H^{2n+1-i}_{c}(F_{0}, \psi^{S}_{f}\Q)  \ar[r]^{(1)}    \ar[d]^{id} & H^{2n+2-i}_{c}(T_{0}, l_{!}\K)  \ar[r]    \ar[d]  &  H^{2n+2-i}_{c}(T_{0}, Rl_{\ast}\K)  \ar[r]    \ar[d]^{(2)}    & \\
 \ar[r] &  H^{2n+1-i}_{c}(F_{0}, \psi^{S}_{f}\Q)   \ar[r]      &         \ar[r]  H^{2n+2-i}_{c}(\CN, l_{!}\K)  \ar[r]          &  H^{2n+2-i}_{c}(\CN, Rl_{\ast}\K)  \ar[r]            & 
}$
\end{center}
The first horizontal long exact sequence in the diagram shows that $H^{2n+2-i}_{c}(T_{0}, Rl_{\ast}\K)$ is a torsion $\Gamma$-module for all $i$. In fact, the Sabbah specialization complex is torsion, and $H_{c}^{2n+2-i}(T_{0},l_{!}\K) \cong H_{i}(F)$ is a torsion $\Gamma$-module. Then it follows from the equality (\ref{3.2}) that the map (2) in the diagram is $0$ for all $i$.

So we have a short exact sequence: \begin{equation} \label{4.2}
 0 \rightarrow   H^{2n+2-i}_{c}(\CN, Rl_{\ast}\K) \rightarrow  H^{2n+2-i}_{c}(\CN\setminus T_{0}, \K)\rightarrow H^{2n+2-i+1}_{c}(T_{0}, Rl_{\ast}\K)\rightarrow 0
 \end{equation}
  Note that $\CN\setminus T_{0}=\U\setminus T_{0}^{\ast}$. Then the equality (\ref{3.2}) and (\ref{4.1}) gives: \begin{center}
$H^{2n+2-i}_{c}(T_{0}, Rl_{\ast}\K)= 0$, for $i\neq n$.
\end{center} 
Thus the map (1) in the diagram is surjective for $i<n$. The torsion $\Gamma$-modules $H^{2n+1-i}_{c}(F_{0}, \psi^{S}_{f}\Q)$ and $H^{2n+2-i}_{c}(T_{0}, l_{!}\K)\cong H_{i}(F)$, are both isomorphic to $H_{i}(\U^{c})$ for $i<n$ (by Theorem \ref{t1.1} in this paper and Corollary 1.3 in \cite{DL}), so the map (1) is in fact an isomorphism for $i<n$.

  Therefore, for $i=n$, we have a short exact sequence:   
 \begin{center}
$0\rightarrow H^{n+1}_{c}(F_{0},\psi^{S}_{f}\Q) \rightarrow  H_{n}(F) \rightarrow H^{n+2}_{c}(T_{0}, Rl_{\ast}\K) \rightarrow 0 
$
\end{center} Then the map $ H^{n+1}_{c}(F_{0}, \psi^{S}_{f}\Q)\rightarrow H_{n}(F)$ is injective, and the proof of Theorem \ref{t1.2} is completed.

\begin{remark} \label{r4.1} Note that $\chi(F_{0},\psi_{f}\Q)=\chi_{c}(F_{0},\psi_{f}\Q)$ (\cite{D2}, Corollary 4.1.23). The difference between $\dim H^{n}_{c}(F_{0}, \psi_{f}\Q)$ and $\dim H_{n}(F)$ is measured  by $\vert \chi(F)-\chi(F_{0},\psi_{f}\Q) \vert$. In particular, if $F_{0}$ has only isolated singular points, then $\chi(F)=\chi(F_{0},\psi_{f}\Q)$ (\cite{D1}, (5.4.4)), which implies that $H_{n}(F)\cong H^{n}_{c}(F_{0},\psi_{f}\Q)$.
\end{remark}

\subsection{The case $\mu =0$} Recall that if $f$ is transversal at infinity, then $F_{0}\sim \vee_{\mu} S^{n}$, and $H_{n+1}(\U^{c})=\Gamma^{\mu}$.  The proof of Theorem \ref{t1.1} and Theorem \ref{t1.2} shows that the number $\mu$ is crucial for the $n$-th Alexander module $H_{n}(\U^{c})$.

Let us write $f=f_{d}+f_{d-1}+\cdots +f_{0}$, where $f_{i}$ is the homogeneous degree $i$ part of $f$. Denote $h=f_{d}$, and $F_{h}$ the corresponding Milnor fibre given by $h=1$.  Maxim showed that the Alexander modules can also be realized by the homology of $F_{h}$ (\cite{LM2}, Proposition 4.9). Hence we have the following result:

\begin{prop} \label{p4.2}  Assume that the polynomial $f: \CN\rightarrow \C$ is transversal at infinity. Then we have the following $\Gamma$-module isomorphisms
\begin{equation} \label{4.3}
H_{i}(F_{h})\cong H_{i}(F)\cong H^{2n+1-i}_{c}(F_{0}, \psi^{S}_{f}\Q)\cong H_{i}(\U^{c}) \text{ for  } i<n,
\end{equation}
 and $H_{n}(\U^{c})$ is a quotient $\Gamma$-module of either of $H_{n}(F_{h})$, $H_{n}(F)$ and $H^{n+1}_{c}(F_{0}, \psi^{S}_{f}\Q)$. The monodromy action on $H_{i}(F)$ is explained in \cite{DL}. Moreover, when $\mu=0$, the isomorphism (\ref{4.3}) holds for all $i$, which is just like the homogeneous polynomial case.
 \end{prop}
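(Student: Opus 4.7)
The plan is to stitch together three already-established descriptions of the Alexander modules. Theorem \ref{t1.1} supplies $H_i(\U^c) \cong H^{2n+1-i}_c(F_0, \psi^{S}_{f}\Q)$ for $i<n$ together with a surjection $H^{n+1}_c(F_0, \psi^{S}_{f}\Q) \twoheadrightarrow H_n(\U^c)$. Theorem \ref{t1.2} provides $H_i(F) \cong H^{2n+1-i}_c(F_0, \psi^{S}_{f}\Q)$ for $i<n$ together with an injection $H^{n+1}_c(F_0, \psi^{S}_{f}\Q) \hookrightarrow H_n(F)$. Maxim's Proposition 4.9 in \cite{LM2} gives $H_i(F_h) \cong H_i(\U^c)$ for $i\leq n$, with the $i=n$ statement a quotient. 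Chaining these three inputs produces the displayed string of $\Gamma$-module isomorphisms for $i<n$.

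Two of the three claimed surjections onto $H_n(\U^c)$ (those from $H^{n+1}_c(F_0, \psi^{S}_{f}\Q)$ and from $H_n(F_h)$) are then immediate. To obtain $H_n(F) \twoheadrightarrow H_n(\U^c)$, I would revisit the long exact sequence from Section 4.1 induced by the inclusion $T_0^{\ast}\hookrightarrow \U$. The identifications $H^{2n+2-i}_c(T_0^{\ast},\K)\cong H_i(F)$ and $H^{2n+2-i}_c(\U,\K)\cong H_i(\U^c)$ convert it into the homology long exact sequence of the pair $(\U^c, F)$, while equation (\ref{4.1}) concentrates the error term $H^{2n+2-i}_c(\U\setminus T_0^{\ast},\K)$ in a single degree, namely $i=n+1$, where it equals $\Gamma^{\mu}$. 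The piece around $i=n$ then reads $\Gamma^{\mu}\to H_n(F) \to H_n(\U^c) \to 0$, producing the required surjection.

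For the case $\mu=0$, the key observation is that every defect $\Gamma^{\mu}$ appearing above vanishes simultaneously. The sequence (\ref{3.3}) collapses to an isomorphism $H^{n+1}_c(F_0,\psi^{S}_{f}\Q)\cong H_n(\U^c)$; the long exact sequence of the pair $(\U^c, F)$ degenerates into $H_i(F)\cong H_i(\U^c)$ for every $i$ by (\ref{4.1}); and the sequence (\ref{3.1}) combined with Lemma \ref{l3.1} (whose only nonzero module becomes trivial when $\mu=0$) yields $H^{2n+1-i}_c(F_0,\psi^{S}_{f}\Q)\cong H_i(\U^c)$ in every degree. Finally, $H_i(F_h)=0$ for $i>n$ by Milnor's theorem while $H_i(\U^c)=0$ for $i\geq n+1$ when $\mu=0$, so the only nontrivial comparison is in degree $n$, where Maxim's surjection becomes an isomorphism by the same collapsing mechanism.

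The main obstacle I anticipate is the $\Gamma$-equivariant bookkeeping in the long exact sequence of the pair $(\U^c, F)$: one needs to check that the identifications between compactly supported cohomology with $\K$-coefficients and ordinary homology of the infinite cyclic cover are compatible with the deck transformation, so that the resulting map $H_n(F)\to H_n(\U^c)$ is genuinely a $\Gamma$-module morphism rather than just a $\Q$-linear one. Once this compatibility is confirmed, the rest of the argument is a matter of assembling vanishing statements already in hand.
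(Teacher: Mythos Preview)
Your proposal is correct and follows essentially the same route as the paper: assemble Theorem~\ref{t1.1}, Theorem~\ref{t1.2}, and Maxim's Proposition~4.9 for the $i<n$ chain and the three surjections at $i=n$, then for $\mu=0$ observe that every $\Gamma^{\mu}$ obstruction term collapses. The one place where the paper is more explicit is the $F_h$ step for $\mu=0$: rather than appealing to an unspecified ``same collapsing mechanism,'' the paper notes that (\ref{3.4}) gives $H^{i}(\CN,l_{!}\K)=0$ for all $i$ when $\mu=0$, and it is precisely these groups that appear as the obstruction in the proof of \cite[Proposition~4.9]{LM2}; this is what upgrades Maxim's surjection $H_n(F_h)\twoheadrightarrow H_n(\U^c)$ to an isomorphism, so you should cite (\ref{3.4}) there rather than leave it implicit.
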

 \begin{proof} It remains to prove that when $\mu=0$, the isomorphism (\ref{4.3}) holds for all $i$. Assume that $\mu=0$, then $H_{n+1}(\U^{c})=0$, and all the cohomology groups in the equality (\ref{3.2}) and (\ref{4.1}) vanish. So, the exact sequence (\ref{3.3}) becomes an isomorphism: $H^{n+1}_{c}(F_{0},\psi^{S}_{f}\Q) \cong H_{n}({\U}^{c}) $, and the short exact sequence (\ref{4.2}) gives: $H^{2n+2-i}_{c}(T_{0}, Rl_{\ast}\K)= 0$ for all $i$, hence \begin{center}
$ H^{2n+1-i}_{c}(F_{0}, \psi^{S}_{f}\Q) \cong   H^{2n+2-i}_{c}(T_{0}, l_{!}\K) \cong H_{i}(F) $ for all $i$.  \end{center}
The equality (\ref{3.4}) gives $H^{i}(\CN, l_{!}\K)=0$ for all $i$. By the proof of Proposition 4.9 in \cite{LM2}, it follows that $H_{i}(\U^{c})\cong H_{i}(F_{h})$ for all $i$.
 \end{proof}

Huh provided (\cite{Huh}) a positive lower bound of $\mu$ for isolated singularities case, unless $V$ is a cone (i.e., $f$ is a homogeneous polynomial up to a change of coordinates).  Theorem 1 in \cite{Huh} can be rephrased as follows:
\begin{thm} \label{t4.3} Assume that the polynomial $f: \CN\rightarrow \C$ is transversal at infinity and $F_{0}$ has only isolated singularities. Let $m$ be the multiplicity of $F_{0}$ at one of its singular points $x$. Then $$\mu \geq (m-1)^{n},$$ unless $V$ is a cone with the apex $x$. In particular, $\mu=0$ if and only if, $V$ is a cone.
\end{thm}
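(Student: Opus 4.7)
The plan is to identify $\mu$ with the invariant bounded by Huh and then invoke his inequality directly, reducing the argument to a translation between the affine and projective settings. Recall from Dimca--Papadima \cite{DP}, cited in Section 2.1, that when $f$ is transversal at infinity, $\mu$ equals the topological degree of the gradient map $\nabla \tilde f : \CP \dashrightarrow \CP$ associated to the homogenization $\tilde f$. The transversality hypothesis also ensures that $V \cap H$ is smooth, so the singular locus of $V$ coincides with that of $F_{0}$; in particular $V$ is a projective hypersurface of dimension $n$ with only isolated singularities, the setting of Theorem 1 in \cite{Huh}. That theorem yields $\deg \nabla \tilde f \geq (m-1)^{n}$ at any singular point $x$ of multiplicity $m$, unless $V$ is a cone with apex $x$; combined with the identification of $\mu$, this gives the main inequality.

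For the ``in particular'' clause, first suppose $V$ is a cone with apex $x$. Transversality at infinity forces the apex into the affine chart $\CN$: a non-linear cone has a singular point at its apex, and such a point cannot lie on $H$; a linear cone $V$ is a hyperplane distinct from $H$, from which we may choose an apex in $\CN$. After an affine translation sending $x$ to the origin (which fixes $H$ and preserves transversality), the cone condition means $\tilde f$ is independent of $x_{0}$, so $f = f_{d}$ is homogeneous of degree $d$; then $F_{0}$ is invariant under scaling from the origin, hence contractible, and $\mu = 0$. Conversely, suppose $\mu = 0$. If $F_{0}$ has a singular point $x$ of multiplicity $m \geq 2$, then $(m-1)^{n} \geq 1 > \mu$, so the main inequality must fail and $V$ must be a cone with apex $x$. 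If instead $F_{0}$ (hence $V$) is smooth, the classical count $\mu = (d-1)^{n+1}$ for a smooth affine hypersurface transversal at infinity forces $d = 1$, so $V$ is a hyperplane, a cone by definition.

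The substantive input is Huh's deep gradient-map bound; the only real ``obstacle'' is the clean translation between its projective formulation and the affine, transversal-at-infinity setup of this paper, together with the minor side checks that the apex must lie in the affine chart and that the smooth edge case is covered by the standard computation of $\mu$ for smooth hypersurfaces.
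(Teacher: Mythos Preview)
Your proposal is correct and follows exactly the paper's approach: the paper does not prove Theorem \ref{t4.3} at all but simply presents it as a restatement of Theorem 1 in \cite{Huh}, relying on the identification of $\mu$ with the degree of the gradient map of $\tilde f$ from \cite{DP}. Your write-up in fact supplies more detail than the paper (the translation to the projective setting, the location of the apex, and the smooth edge case), but the substance is identical.
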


\begin{remark}
This theorem is not true for non-isolated singularities case. In fact, consider the threefold in $\mathbb{CP}^{4}$ defined by : \begin{center}
 $V=\lbrace x_{3}^{d-1}x_{0}+ x_{3}^{d-2}x_{4}x_{1}  + x_{4}^{d-1}x_{2}=0 \rbrace$, with $d\geq 3$.
 \end{center} $\mu$ can be determined topologically as the degree of the gradient map of the defining equation of $V$, see \cite{DP}. It is known that $\mu=0$ and $V$ is not a cone (see \cite{Huh}).  
 \end{remark}

We can use Proposition \ref{p4.2} to give a different proof of the last claim in Theorem \ref{t4.3} for the plane curve case $(n=1)$:
\begin{prop} Assume that the degree $d$ polynomial $f: \C^{2}\rightarrow \C$ is transversal at infinity. If $\mu=0$, then $f$ is a homogeneous polynomial up to a change of coordinates.
\end{prop}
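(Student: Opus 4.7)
The plan is to use Proposition \ref{p4.2} in two directions, read off the number of irreducible components of $V$ via a Cartan--Leray computation, and finish with a short line-arrangement calculation. Since $\mu=0$ and $F_0\sim\vee_{\mu}S^{1}$ for $n=1$, the affine curve $F_0$ is contractible; in particular $\chi(F_0)=1$. I first apply Proposition \ref{p4.2} to $f$ to get a $\Gamma$-module isomorphism $H_1(\U^c)\cong H_1(F_h)$. The top-degree part $h=f_d$ is itself a reduced homogeneous polynomial of degree $d$: transversality of $f$ at infinity forces $h$ to have $d$ distinct linear factors, so the affine zero set $V(h)\cap\C^{2}$ is a union of $d$ concurrent lines through the origin, hence contractible, and in particular the $\mu_h$ associated with $h$ itself vanishes. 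A second application of Proposition \ref{p4.2}, now to $h$, yields $H_1(\U_h^c)\cong H_1(F_h)$ with $\U_h:=\C^{2}\setminus V(h)$, and chaining the two gives $H_1(\U^c)\cong H_1(\U_h^c)$ as $\Gamma$-modules.

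I would then extract the number $r$ of irreducible components of $V$ from the Cartan--Leray short exact sequence
\[
0\to \Q \to H_1(\U;\Q) \to H_1(\U^c)/(t-1) \to 0
\]
for the infinite cyclic cover $\U^c\to\U$. Since $H_1(\U;\Q)=\Q^{r}$, this gives $\dim H_1(\U^c)/(t-1)=r-1$. The same sequence applied to $\U_h$ produces cokernel dimension $d-1$, because $V(h)$ has exactly $d$ line components. The $\Gamma$-module isomorphism above then forces $r=d$. Since $V\subset\mathbb{CP}^{2}$ has degree $d$ and $d$ irreducible components, every component must be a line, and transversality at infinity guarantees that these $d$ lines meet $H$ in $d$ distinct points, i.e.\ have pairwise distinct slopes in $\C^{2}$.

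To finish, let $P$ denote the set of affine intersection points of these lines and $m_p\geq 2$ the number of lines through $p\in P$. The standard additivity of the Euler characteristic for a line arrangement gives $\chi(F_0)=d-\sum_{p\in P}(m_p-1)$, and distinct slopes produce the pair count $\sum_{p\in P}\binom{m_p}{2}=\binom{d}{2}$. Setting $a_p:=m_p-1\geq 1$, the conditions $\chi(F_0)=1$ and the pair count translate into $\sum a_p=d-1$ and $\sum a_p^{2}=(d-1)^{2}=(\sum a_p)^{2}$; since each $a_p\geq 1$, this equality forces $|P|=1$, i.e.\ all $d$ lines are concurrent at a single point. Translating that point to the origin exhibits $f$ as a scalar multiple of a product of $d$ linear forms vanishing at the origin, showing that $f$ is homogeneous of degree $d$ up to a change of coordinates.

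The step I expect to be most delicate is verifying that the chained $\Gamma$-module isomorphism $H_1(\U^c)\cong H_1(\U_h^c)$ arising from the two uses of Proposition \ref{p4.2} really identifies the respective $t$-actions, so that the $(t-1)$-cokernel dimensions on both sides genuinely coincide; the remaining topology and the final equality case are elementary.
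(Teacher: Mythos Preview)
Your argument is correct and follows the paper's three-step skeleton: identify $H_1(\U^c)\cong H_1(F_h)$ via Proposition~\ref{p4.2}, deduce that $V$ has $d$ components (hence is a line arrangement with pairwise distinct slopes), and finish combinatorially by forcing the lines to be concurrent. The genuine difference is in the middle step: the paper computes the Alexander polynomial of $F_h$ explicitly as $(t-1)^{d-1}\bigl((t^d-1)/(t-1)\bigr)^{d-2}$ and reads off $r=d$ from the $(t-1)$-multiplicity via Oka's lemma, whereas you apply Proposition~\ref{p4.2} a second time to $h$ itself (whose affine zero set is contractible, so $\mu_h=0$) to obtain $H_1(\U^c)\cong H_1(\U_h^c)$ as $\Gamma$-modules and then compare $(t-1)$-cokernels through the Milnor sequence on each side. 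Your route is more self-contained and avoids both the explicit polynomial and the external citation; your flagged concern about matching $t$-actions is already handled by Proposition~\ref{p4.2}, which states $\Gamma$-module isomorphisms. One small slip: the short exact sequence from the cyclic cover reads $0\to H_1(\U^c)/(t-1)\to H_1(\U;\Q)\to\Q\to 0$, with the coinvariants sitting as a subspace rather than a quotient, but your dimension count is unaffected. The closing concurrency arguments are equivalent: your equality $\sum a_p^2=(\sum a_p)^2$ and the paper's inequality $\sum m_i(m_i-1)<d\sum(m_i-1)$ when no $d$-fold point exists encode the same arithmetic.
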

\begin{proof} $V\cap H$ is defined by the homogeneous polynomial $h=f_{d}=0$. For $n=1$ and $f$ transversal at infinity, $V\cap H$ is $d$ distinct points, so, without loss of generality, we can assume $h=f_{d}=x_{1}^{d}+x_{2}^{d}$. When $\mu=0$, Proposition \ref{p4.2} gives $H_{1}(F_{h})\cong H_{1}(\U^{c})$ as $\Gamma$-module, so \begin{center}
 $\delta_{1}(t)=(t-1)^{d-1}(\dfrac{t^{d}-1}{t-1})^{d-2}$.
\end{center} The multiplicity of the factor $(t-1)$ in $\delta_{1}(t)$ is $d-1$, so $V$ has $d$ irreducible components (\cite{Oka}, Lemma 21). Since $f$ is a degree $d$ polynomial, this shows that $V$ is a hyperplane arrangement. 

Since $f$ is transversal at infinity, it follows that there can not exist two lines parallel to each other. Assume that $F_{0}$ has $k$ singular points with multiplicities $m_{1},\cdots,m_{k}$, where $2\leq m_{i} \leq d$. Then we have two equalities as follows: \begin{equation} \label{4.4}
\sum_{i=1}^{k} \binom{m_{i}}{2}=\binom{d}{2},
\end{equation}
\begin{equation} \label{4.5}
0=\chi (\U)=1-d+\sum _{i=1}^{k}(m_{i}-1).
\end{equation}

If there exists a $d$-fold singular point in $V$, then $f=(x_{1}-a)^{d}+(x_{2}-b)^{d}$, where $(a,b)$ is the coordinate of this singular point in $\mathbb{C}^{2}$. Therefore, up to a change of coordinates, $f$ is a homogeneous polynomial.

If there is no $d$-fold point, then $m_{i}< d$.  Using equation (\ref{4.4}), we get \begin{center}
$d(d-1)=\sum_{i=1}^{k} m_{i}(m_{i}-1)<d\sum _{i=1}^{k}(m_{i}-1)$,
\end{center}
which implies $d-1<\sum _{i=1}^{k}(m_{i}-1)$. This contradicts equation (\ref{4.5}).
\end{proof}

\section{Divisibility Results}
In this section, we obtain a divisibility result for Alexander polynomials via the Sabbah specialization complex. The proof of our result is similar to that of Theorem 4.2 in \cite{LM2}.

\subsection{Proof of Theorem 1.3}
Fix a Whitney b-regular stratification of $F_{0}$. Let $\ST$ denote a stratum in this stratification with complex dimension $s$.

We use the Lefschetz hyperplane section theorem and induction down on $i$. The beginning of the induction is the characterization of the top Alexander polynomial: the prime factors of $\delta_{n}(t)$ are among the prime factors of local polynomials $\Delta^{\ST}_{q}(t)$ corresponding to strata $\ST\subset F_{0}$, with $0\leq s=\dim\ST \leq n$ and $n-2s\leq q \leq n-s$.

By the compactly supported hypercohomology long exact sequence corresponding to the inclusion of strata of $F_{0}$, the polynomial associated to $H_{c}^{n+1}(F_{0},\psi^{S}_{f}\Q)$ divides the product of polynomials associated to $H_{c}^{n+1}(\ST,(\psi^{S}_{f}\Q)\vert_{\ST})$, where $\ST$ runs over the strata of $F_{0}$.

Next, we need the following lemma proved by Maxim.
\begin{lem}(\cite{LM2}, Lemma 4.3) \label{l5.1}
The prime factors of the polynomial associated to $H_{c}^{i+1}(\ST,\mathcal{R}^{\bullet op}\vert_{\ST})$ must divide one of the local Alexander polynomials  $\Delta^{\ST}_{q}(t)$ associated to the link pair of an arbitrary point in $\ST$, in the range $0\leq q \leq n-s$ and $0\leq i-q \leq 2s$.
\end{lem}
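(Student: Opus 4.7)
The plan is to analyze the cohomology sheaves $\mathcal{H}^q(\R^{op}|_\ST)$ stalkwise, and then run a hypercohomology spectral sequence to bound the prime factors of $H^{i+1}_c(\ST,\R^{op}|_\ST)$.

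First, since $\R$ is constructible with respect to the Whitney stratification, each $\mathcal{H}^q(\R^{op}|_\ST)$ is a $\Gamma$-module local system on $\ST$. The identification $\R|_{V\cup H}\cong (Rj_\ast \K)|_{V\cup H}$ from Section~2 allows me to compute the stalk at a point $x\in \ST$ as $H^q(B_x \cap \U,\K^{op})$ for a small contractible neighborhood $B_x$. By Whitney's local product structure, $B_x\cap \U$ is homotopy equivalent to the complement of $V$ in a transverse slice to $\ST$ at $x$, so the characteristic polynomial of this torsion $\Gamma$-module coincides with the $q$-th local Alexander polynomial $\Delta^{\ST}_q(t)$ up to the involution $t\mapsto t^{-1}$; since the roots of Alexander polynomials are roots of unity by Theorem~2.2, this involution preserves the set of prime factors. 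Moreover, the transverse slice minus $V$ is a smooth affine variety of complex dimension $n+1-s$, so Artin vanishing gives $\mathcal{H}^q(\R^{op}|_\ST)=0$ for $q$ outside the range $[0, n-s]$.

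Next, I would apply the hypercohomology spectral sequence
\begin{equation*}
E_2^{p,q}=H^p_c(\ST,\mathcal{H}^q(\R^{op}|_\ST))\Longrightarrow H^{p+q}_c(\ST,\R^{op}|_\ST),
\end{equation*}
together with the Artin vanishing $H^p_c(\ST,-)=0$ for $p$ exceeding the real dimension $2s$ of $\ST$. The polynomial associated to the abutment divides the product of the polynomials of the $E_\infty^{p,q}$ terms on the anti-diagonal $p+q$ equal to $i+1$, each of which divides the polynomial of the corresponding $E_2^{p,q}$. Using the standard fact that the prime factors of the cohomology of a $\Gamma$-module local system lie among the prime factors of its stalks, each $E_2^{p,q}$ contributes only prime factors of $\Delta^{\ST}_q(t)$. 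Combining the bounded ranges $0\leq p\leq 2s$ and $0\leq q\leq n-s$ then gives the asserted constraints.

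The main obstacle will be tracking the shift conventions precisely: the present paper's $\R$ differs from the Cappell--Shaneson peripheral complex by $[-2n-2]$ (Remark~2.6), and the definition of $\Delta^{\ST}_q(t)$ fixes a specific cohomological degree for the link pair. These conventions must be reconciled so that the spectral sequence index matches the target $H^{i+1}_c$ and produces exactly the range $0\leq i-q\leq 2s$ rather than a shifted one; once the bookkeeping is done, the rest of the argument is the mechanical combination described above.
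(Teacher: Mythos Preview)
The paper does not give its own proof of this lemma; it is quoted directly from \cite{LM2}, Lemma~4.3. Your approach---identifying the stalks of the cohomology sheaves of $\R^{op}|_{\ST}$ with the local Alexander modules of the link pair and then running the compactly supported hypercohomology spectral sequence---is exactly the argument used there.

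The one adjustment needed is the degree range for the nonvanishing cohomology sheaves. By equation~(\ref{5.1}) of the present paper (together with Lemma~\ref{l2.9}), one has $\mathcal{H}^{q+1}(\R^{op})_x \cong H_q(F_x)$ for $x\in\ST$, so the nonzero stalks on $\ST$ sit in sheaf degrees $q'\in[1,n-s+1]$ rather than $[0,n-s]$. With this shift the spectral sequence term $E_2^{p,q'}$ contributing to $H^{i+1}_c$ satisfies $p+q'=i+1$; writing $q=q'-1$ gives $p=i-q$, and the constraints $0\le p\le 2s$ and $0\le q\le n-s$ become precisely $0\le i-q\le 2s$ and $0\le q\le n-s$. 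So the bookkeeping you flagged as the main obstacle resolves cleanly and yields the stated range on the nose.
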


Lemma \ref{l2.9} shows that $\R\vert_{F_{0}}\cong \psi^{S}_{f}{\Q}$. Note that all the prime factors of $\Delta^{\ST} _{q}(t)$ are roots of unity by general (Hodge) theory, so $\overline{\Delta^{\ST} _{q}(t)}=\Delta^{\ST} _{q}(t)$, where $\overline{\ast}$ represents the involution $t \rightarrow t^{-1}$. Then, Lemma \ref{l5.1} (for $i=n$), composed with the involution, gives that the prime factors of $\delta_{n}(t)$ are among the prime factors of local polynomials $\Delta^{\ST}_{q}(t)$ corresponding to strata $\ST$, with $0\leq s=\dim\ST \leq n$ and $n-2s\leq q \leq n-s$.

We denote the Alexander polynomial of $V$ by $\delta^{V}_{i}(t)$ and call $\delta^{V}_{n}(t)$ the top Alexander polynomial of $V$.
 Let $1\leq k <n$ be fixed. Consider $L=\mathbb{CP}^{n-k+1}$  a generic codimension  $k$ linear subspace of $\CP$, so that $L$ is transversal to $V\cup H$. Then $W=L\cap V$ is a $(n-k)$-dimensional, degree $d$, reduced hypersurface in $L$, which is transversal to the hyperplane at infinity $H\cap L$ of $L$. Moreover, by the transversality assumption, $L\cap F_{0}$ has a Whitney stratification induced from $F_{0}$, with strata of the form $\ST\cap L$, for $\ST$ a stratum in $F_{0}$.

By applying the Lefchetz hyperplane section theorem (\cite{D1}, (1.6.5)) to $\U=\CP-(V\cup H)$ by intersection with $L$, we get that the inclusion $\U\cap L \hookrightarrow\U$ is a homotopy $(n-k+1)$-equivalence. Therefore the homotopy type of $\U$ is obtained from $\U\cap L$ by adding cells of dimension $>n-k+1$. Hence the same is true for the corresponding infinity cyclic covers. Therefore, $H_{i}((\U\cap L)^{c})\cong H_{i}(\U^{c})$  for  $i\leq n-k$, hence $\delta^{W}_{n-k}(t)=\delta^{V}_{n-k}(t)$.

Note that $\delta^{W}_{n-k}(t)$ is the top Alexander polynomial of $W$ as a hypersurface in $L=\mathbb{CP}^{n-k+1}$, therefore, by induction, the prime factors of $\delta_{n-k}(t)$ are among the prime factors of local polynomials $\Delta^{\ST\cap L}_{q}(t)$ corresponding to stratum $\ST\cap L$, $0\leq r=\dim(\ST\cap L) \leq n-k$ and $n-k-2r\leq q \leq n-k-r$. Using the fact that the link pair of a stratum $\ST\cap L$ in $L\cap F_{0}$ is the same as the link pair of $\ST$ in $F_{0}$, the conclusion follows by reindexing (replace $r$ by $s-k$, where $s=\dim\ST$).

\begin{remark} For $x\in \ST$, let $F_{x}$ be the local Milnor fibre of $f$ at $x$. Then $H_{q}(F_{x})$ is a torsion $\Gamma$-module, induced by the inclusion of the link pair of $x$ into $(\CN,F_{0})$, and $\Delta^{\ST} _{q}(t)$ is the corresponding local Alexander polynomial for $H_{q}(F_{x})$. So, by stalk calculation and the superduality isomorphism (see the proof of Lemma 4.3 in \cite{LM2}, or Proposition 3.12 in \cite{Bu2}), 
\begin{equation} \label{5.1}
\mathcal{H}^{q+1}(\psi^{S}_{f}\Q)_{x}\cong  \mathcal{H}^{q+1}(Rl_{\ast}\K)_{x} \cong \overline{ H_{q}(F_{x})},
\end{equation} where $\overline{\ast}$ represents the involution $t \rightarrow t^{-1}$.

 Let $\gamma$ be the monodromy action on the $\Q$-vector space $H_{q}(F_{x})$ induced by the local Milnor fibration. If $\gamma_{c}$ is the monodromy action on $H^{q}(F_{x})$ associated to the nearby cycle functor, then (\cite{DS2})  \begin{center}
$\gamma_{c}= \text{}^{T}\gamma^{-1}$.
\end{center}
 where  $^{T}\ast$ represents the transpose, and $\ast^{-1}$ is the involution. By equation (\ref{5.1}), we have the following $\Gamma$-module isomorphism:
\begin{center}
$\mathcal{H}^{q}(\psi_{f}\Q)_{x}  \cong \text{}^{T}  \mathcal{H}^{q+1}(\psi^{S}_{f}\Q)_{x}$.  
  \end{center}   
In particular, since all the prime factors of local Alexander polynomials are roots of unity, the operations of taking the involution and transpose do not change the Alexander polynomial.
So $H^{2n+1-i}_{c}(\psi^{S}_{f}\Q)$ and $H^{2n-i}_{c}(\psi_{f}\Q)$ give the same Alexander polynomial, and we can realize the Alexander polynomials by nearby cycles. 
\end{remark}
 
\subsection{Application}
Recall that for $f$ transversal at infinity,  $F_{0}\sim \vee_{\mu} S^{n}$. Let $i$ be the inclusion of $F_{0}$ into $\CN$. Then \begin{center}
$\mathcal{D}( i^{!}\Q_{\CN}) \cong i^{ \ast}\Q_{\CN}[2n+2]= \Q_{F_{0}}[2n+2]$.
\end{center} 
Therefore, \begin{center}
$H^{2n+2-i}_{c}(F_{0},i^{ !}\Q_{\CN}) \cong H^{i}(F_{0})=\left\{ \begin{array}{ll}
\Q, & i=0, \\
\Q^{\mu}, & i=n, \\
0, & \text{otherwise.}\\
\end{array}\right.$ 
\end{center} 
Consider the distinguished triangle $ i^{ !}\Q_{\CN} \rightarrow  \phi_{f}\Q[-1]  \overset{var}{\to}  \psi_{f}\Q[-1] \overset{[1]}{\to}$ (\cite{D2}, Remark 4.2.12). By applying the compactly supported hypercohomoly functor to this triangle, we have $\Q$-vector space isomorphisms: \begin{center}
$H^{2n-i}_{c}(F_{0},\psi_{f}\Q)\cong H^{2n-i}_{c}(F_{0},\phi_{f}\Q)$ for $1\leq i< n-1$
\end{center} and a long exact sequence:\begin{equation} \label{5.2}
0\rightarrow H^{n}_{c}(F_{0},\phi_{f}\Q)\rightarrow H^{n}_{c}(F_{0},\psi_{f}\Q)\rightarrow \Q^{\mu}\rightarrow H^{n+1}_{c}(F_{0},\phi_{f}\Q)\rightarrow H^{n+1}_{c}(F_{0},\psi_{f}\Q)\rightarrow 0 
\end{equation}

\subsubsection{Isolated singularities}
Assume that $F_{0}$ has only isolated singular points, Remark \ref{r4.1} shows that $ H_{n}(F)\cong H^{n}_{c}(F_{0},\psi_{f}\Q)$. So we get the same divisibility results as in \cite{DN}, 2.16:
\begin{center}
$\delta_{n}(t)\mid (t-1)^{\mu}\prod_{p\in Sing(F_{0})}\Delta_{p}(t)$,
\end{center} where $ \Delta_{p}(t)$ is the top local Alexander polynomial for $p\in Sing(F_{0})$.

\subsubsection{Manifold singularity locus}
Assume that $Sing(F_{0})$ itself is a stratum with dimension $s$ in the Whitney stratification of $F_{0}$, denoted by $\ST$. Then  the vanishing cycle $\phi_{f}\Q$ is a local system  placed in degree $n-s$ over $\ST$. Let $\Delta(t)$ be the top local Alexander polynomial for arbitrary point in $\ST$.  

If this local system is a constant sheaf (e.g., $\ST$ is simply connected), then \begin{center}
 $\delta_{i}(t)=(\Delta(t))^{\nu_{i-n+s}}$, for  $n-s\leq i<n-1$;
\end{center}
 \begin{center}
  $\delta_{n-1}(t)\mid (\Delta(t))^{\nu_{s-1}}$, for $i=n-1$;
\end{center}
 \begin{center}
  $\delta_{n}(t)\mid (t-1)^{\mu}(\Delta(t))^{\nu_{s}}$, for $i=n$,
\end{center} where $\nu_{i-n+s}$ is the $(i-n+s)$-th betti number of $\ST$.

In particular, if $\ST$ is $(s-1)$-connected (e.g., $\ST$ is a smooth complete intersection, see \cite{D1}, (5.3.24)), then $\ST$ is homotopy equivalent to a bouquet of $s$-spheres, $\ST\sim \vee_{\eta} S^{s}$.  Assume $s> 1$. Then $\ST$ is simply connected, so, in this case, \begin{center}
  $\delta_{i}(t)=\Delta(t)$, for $i=n-s$;

  $\delta_{i}(t)=1$, for $n-s<i<n$;

  $\delta_{n}(t)\mid (t-1)^{\mu}(\Delta(t))^{\eta}$, for $i=n$.
\end{center}
\begin{example} Assume $V$ has only two irreducible components $V_{1}$ and $V_{2}$, which intersects transversally with each other. If $V_{1}$ and $V_{2}$ are both smooth and $n\geq 3$,
 then $\delta_{i}(t)=1$ for $2\leq i \leq n-1$.
\end{example}

\subsubsection{First non-vanishing Alexander module}
Here we use the vanishing cycles to recover Libgober's result (\cite{L3}, Lemma 1.5), which states that: $H_{i}(\U^{c})=0$ for $0<i<n-k$, where $k\geq 0$ is the complex dimension of the singular part of $F_{0}$. In fact, Libogber showed that $\pi_{i}(\U^{c})=0$ for $0<i<n-k $ by using the Lefschetz hyperplane section theorem.

The vanishing cycle complex $\phi_{f}\Q$ has support on $Sing(F_{0})$. Since $Sing(F_{0})$ is a $k$-dimensional affine variety and $\phi_{f}\Q$ is a perverse sheaf on $Sing(F_{0})$, $H^{2n-i}_{c}(F_{0},\phi_{f}\Q)=0$ if $i\notin [n-k,n]$ by Artin's vanishing results. Therefore $H_{i}(\U^{c})\cong H^{2n-i}_{c}(F_{0},\psi_{f}\Q)=0$ for $1\leq i< n-k$.
    
We also get that $H_{i}(F)\cong H_{i}(\U^{c})=0$ if $1\leq i< n-k$, for $k \geq 0$. In fact, the generic fibre is actually $(n-k-1)$-connected. Dimca and Libgober showed (\cite{DL}) that there is a inclusion from the generic fibre $F$ to $\U^{c}$, which induces a homotopy $n$-equivalence.  Then $\pi_{i}(F)\cong \pi_{i}(\U^{c})$ for $0\leq i<n$. In particular, $\pi_{i}(F)\cong \pi_{i}(\U^{c})=0$ for $0<i<n-k$.

\subsubsection{The eigenvalue decomposition of the nearby cycles and vanishing cycles}
 The roots of $\delta_{i}(t)$ are roots of unity of order $d$ for $0\leq i \leq n$. Then we have the following $\Q$-vector space isomorphisms for $1\leq i <n$: \begin{center}
$ H_{i}(\U^{c},\C)\cong H^{2n-i}_{c}(F_{0},\psi_{f}\C)\cong \oplus_{\lambda^{d}=1}H^{2n-i}_{c}(F_{0},\psi_{f,\lambda}\C) $,
\end{center}  where $\psi_{f,\lambda}\C$  is the subsheaf complex of $\psi_{f}\C$ with eigenvalue $\lambda$. This shows that if $\lambda^{d}\neq 1$, $H^{2n-i}_{c}(F_{0},\psi_{f,\lambda}\C)\cong H^{2n-i}_{c}(F_{0},\phi_{f,\lambda}\C)=0 $ for $1\leq i <n$.

\subsubsection{Rational homology manifold}
   Assume that $F_{0}$ is a rational homology manifold. This is equivalent to say that all the local links are rational homology spheres, i.e., all the local monodromy operators do not have $1$ as eigenvalue in positive degrees.

\begin{cor} Assume that the polynomial $f: \CN\rightarrow \C$ is transversal at infinity. If $F_{0}$ is a rational homology manifold, then $\delta_{i}(1)\neq 0$ for all $0<i<n$.
\end{cor}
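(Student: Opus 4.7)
The plan is to reduce to a vanishing of compactly supported cohomology of $F_{0}$ via the eigenvalue decomposition of the nearby cycles, and then invoke Poincar\'e--Lefschetz duality on $F_{0}$.

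First, note that $\delta_{i}(1)\ne 0$ is equivalent to saying that the $1$-eigenspace of the monodromy $t$ acting on $H_{i}(\U^{c},\C)$ vanishes. By Theorem \ref{t1.1} (composed with the forgetful functor) and the eigenvalue decomposition recalled in section 5.2.4, for $0<i<n$,
\begin{equation*}
H_{i}(\U^{c},\C)_{(t=1)}\;\cong\;H^{2n-i}_{c}(F_{0},\psi_{f,1}\C),
\end{equation*}
where $\psi_{f,1}\C$ denotes the unipotent ($\lambda=1$) summand of $\psi_{f}\C$. Thus it suffices to show $H^{2n-i}_{c}(F_{0},\psi_{f,1}\C)=0$ for $0<i<n$.

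Second, I would use the standard distinguished triangle $i^{\ast}\Q_{\CN}\rightarrow\psi_{f}\Q\rightarrow\phi_{f}\Q\overset{[1]}{\to}$ (cf.\ \cite{D2}), whose canonical morphism lands in the unipotent part. Restricting to the $1$-eigenspace yields
\begin{equation*}
\Q_{F_{0}}\;\longrightarrow\;\psi_{f,1}\Q\;\longrightarrow\;\phi_{f,1}\Q\;\overset{[1]}{\to}.
\end{equation*}
Stalkwise, $\phi_{f,1}\Q$ at $x\in F_{0}$ computes the $1$-eigenspace of the monodromy on $\widetilde{H}^{\ast}(F_{x},\C)$, which vanishes precisely when the local link at $x$ is a $\Q$-homology sphere. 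Hence the rational homology manifold hypothesis on $F_{0}$ gives $\phi_{f,1}\Q=0$ and consequently a quasi-isomorphism $\psi_{f,1}\Q\cong\Q_{F_{0}}$. So
\begin{equation*}
H^{2n-i}_{c}(F_{0},\psi_{f,1}\C)\;\cong\;H^{2n-i}_{c}(F_{0},\C).
\end{equation*}

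Third, I would compute $H^{j}_{c}(F_{0},\Q)$ in the range $n<j<2n$. Since $F_{0}$ is a pure $n$-dimensional $\Q$-homology manifold, Poincar\'e--Lefschetz duality gives $H^{2n-i}_{c}(F_{0},\Q)\cong H_{i}(F_{0},\Q)$. The transversality of $f$ at infinity ensures $F_{0}\simeq\bigvee_{\mu}S^{n}$, so $H_{i}(F_{0},\Q)=0$ for $0<i<n$, which finishes the argument.

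The main obstacle is the identification $\psi_{f,1}\Q\cong\Q_{F_{0}}$ from the rational homology manifold hypothesis; but this is the well-known stalkwise equivalence between $F_{0}$ being a $\Q$-homology manifold and the vanishing of the unipotent vanishing cycle complex $\phi_{f,1}\Q$, and once this step is granted the rest is routine Poincar\'e--Lefschetz duality on the bouquet of spheres $F_{0}$.
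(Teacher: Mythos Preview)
Your argument is correct and follows essentially the same route as the paper: both use the distinguished triangle $\Q_{F_{0}}\to\psi_{f,1}\Q\to\phi_{f,1}\Q\overset{[1]}{\to}$ together with the rational homology manifold hypothesis to obtain $\psi_{f,1}\Q\cong\Q_{F_{0}}$, and then apply Poincar\'e duality on $F_{0}\sim\vee_{\mu}S^{n}$ to conclude $H^{2n-i}_{c}(F_{0},\Q)=0$ for $0<i<n$. The only cosmetic difference is that you pass through $H_{i}(F_{0})$ while the paper writes $H^{i}(F_{0})$, but these are equivalent formulations of the same duality.
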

\begin{proof} Consider the distinguished triangle $\Q_{F_{0}}\rightarrow \psi_{f,1}\Q \rightarrow \phi_{f,1}\Q \overset {[1]}{\to}$. All the local monodromy operators do not have $1$ as eigenvalue in positive degrees, if and only if, $\phi_{f,1}\Q=0$. So $\Q_{F_{0}}\cong  \psi_{f,1}\Q$. 

Since all the local links are spheres, Poincar\'{e} duality holds for $F_{0}$.  Recall that $F_{0}\sim \vee_{\mu} S^{n}$, then \begin{center}
$H^{2n-i}_{c}(F_{0},\psi_{f,1}\Q) \cong H^{2n-i}_{c}(F_{0}) \cong H^{i}(F_{0})=\left\{ \begin{array}{ll}
\Q, & i=0, \\
\Q^{\mu}, & i=n,\\
0, & \text{otherwise.}\\
\end{array}\right.$ 
\end{center} Therefore $\delta_{i}(1)\neq 0$ for all $0<i<n$.  
\end{proof}

  We can improve this result as follows (compare with the similar proof of Maxim's result \cite{LM2}, Proposition 2.1):
 \begin{prop} \label{p5.5} Assume that the polynomial $f: \CN\rightarrow \C$ is transversal at infinity. If $F_{0}$ is a rational homology manifold, then $V$ is irreducible and $\delta_{i}(1)\neq 0$ for all $0<i\leq n$.
\end{prop}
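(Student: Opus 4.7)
The plan is to bootstrap from the previous corollary (which already gives $\delta_i(1)\neq 0$ for $0<i<n$) to the two new assertions, using the $(n{+}1)$-st step of the Wang-type exact sequence of the infinite cyclic cover $\U^c\to\U$ together with a direct computation of $H_{n+1}(\U)$ made possible by the rational homology manifold hypothesis.

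First I would establish that $V$ is irreducible. If $V=V_{1}\cup\cdots\cup V_{r}$ with $r\ge 2$, the stratified transversality of $V$ to $H$ prevents any stratum of $V$ from being contained in $H$; taking a Whitney stratification of $V$ refining the decomposition into irreducible components, this forces $V_{i}\cap V_{j}\not\subseteq H$ for $i\neq j$. Hence one can pick a point $x\in F_{0}$ lying on two distinct analytic branches of $F_{0}$. Since a reduced hypersurface germ with several analytic branches is never a rational homology manifold (the local link fails to be connected, let alone a rational homology sphere), this contradicts the assumption on $F_{0}$.

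Next I would compute $H_{n+1}(\U,\Q)=\Q^{\mu}$. Since $F_{0}$ is a rational homology manifold of complex dimension $n$, Poincar\'e duality gives $H^{i}_{c}(F_{0})\cong H_{2n-i}(F_{0})$; combined with $F_{0}\sim\vee_{\mu}S^{n}$ this produces $H^{2n}_{c}(F_{0})\cong\Q$, $H^{n}_{c}(F_{0})\cong\Q^{\mu}$, and zero otherwise. Feeding this into the long exact sequence
\[
\cdots\to H^{i}_{c}(\U)\to H^{i}_{c}(\CN)\to H^{i}_{c}(F_{0})\to H^{i+1}_{c}(\U)\to\cdots,
\]
and using that $H^{i}_{c}(\CN)\cong\Q$ only in degree $2n+2$, one reads off $H^{n+1}_{c}(\U)\cong\Q^{\mu}$. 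Poincar\'e duality on the smooth $(2n{+}2)$-manifold $\U$ then yields $H_{n+1}(\U)\cong\Q^{\mu}$.

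Finally, the Wang sequence of the cover $\U^{c}\to\U$ at level $n+1$ reads
\[
0\to H_{n+1}(\U^{c})_{t=1}\to H_{n+1}(\U)\to H_{n}(\U^{c})^{t=1}\to 0,
\]
where $(-)_{t=1}$ and $(-)^{t=1}$ denote coinvariants and invariants under the deck transformation $t$. Since $H_{n+1}(\U^{c})=\Gamma^{\mu}$, its coinvariants are $\Q^{\mu}$; combined with $H_{n+1}(\U)\cong\Q^{\mu}$, the injection $\Q^{\mu}\hookrightarrow\Q^{\mu}$ is automatically an isomorphism, forcing $H_{n}(\U^{c})^{t=1}=0$. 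By the semi-simplicity of $H_{n}(\U^{c})$ as torsion $\Gamma$-module (Theorem 2.2), this is equivalent to $(t-1)\nmid\delta_{n}(t)$, i.e.\ $\delta_{n}(1)\neq 0$. Together with the preceding corollary, this completes the proof. The main delicate point is the irreducibility step, where the stratified transversality at infinity is essential to rule out the degenerate configuration in which the components of $V$ meet only along $H$; the rest is a standard transport of Poincar\'e duality information through the Wang sequence.
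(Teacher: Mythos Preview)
Your strategy for $\delta_n(1)\neq 0$ --- computing $H_{n+1}(\U,\Q)$ via Poincar\'e duality on the rational homology manifold $F_0$, then feeding this into the $(n{+}1)$-st piece of the Milnor/Wang sequence --- is correct and is exactly what the paper does (though the paper computes all of $H^i(\U,\Q)$ at once and reads off $\delta_i(1)\neq 0$ for every $0<i\le n$ from the same sequence, rather than bootstrapping from the previous corollary).

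There is, however, a genuine error in your irreducibility step. The parenthetical claim that the link of a reducible hypersurface germ ``fails to be connected'' is false for $n\ge 2$: by Milnor's theorem the link of a hypersurface singularity in $\CN$ is $(n-2)$-connected, hence connected once $n\ge 2$. Your \emph{conclusion} that such a germ is never a rational homology manifold point is still true, but it needs a different justification --- e.g.\ at a point with $k\ge 2$ local branches the local complement has $H_1\cong\Q^{k}$, and the local Wang sequence then forces the monodromy to have eigenvalue $1$ on $H_1(F_x)$, so the link is not a rational homology sphere. You should replace the disconnectedness remark by an argument of this kind.

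The paper sidesteps this local analysis entirely. Having computed $H^i(\U,\Q)$ in all degrees (in particular $H^1(\U,\Q)=\Q$), it simply combines this with $H_1(\U,\Z)=\Z^{r}$ to conclude $r=1$. That route is shorter, avoids the stratification/transversality bookkeeping in your first paragraph, and makes the proof independent of the preceding corollary; your geometric approach is more hands-on but requires the correction above to go through.
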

 \begin{proof} Since all the local links are rational homology spheres, Poincar\'{e} duality holds for $F_{0}$.  Recall that $F_{0}\sim \vee_{\mu} S^{n}$, so $H^{i}_{c}(F_{0},\Q)=\left\{ \begin{array}{ll}
\Q, & i=2n, \\
\Q^{\mu}, & i=n, \\
0, & \text{otherwise.}\\
\end{array}\right.$

 By the compactly supported hypercohomology long exact sequence for the inclusions of $\U$ and $F_{0}$ into $\CN$, we get: \begin{center}
 $\cdots \rightarrow H^{i}_{c}(\U,\Q)\rightarrow H^{i}_{c}(\CN,\Q) \rightarrow H^{i}_{c}(F_{0},\Q)\rightarrow \cdots$
 \end{center} Thus $H^{i}_{c}(\U,\Q)=\left\{ \begin{array}{ll}
\Q, & i=2n+1,2n+2, \\
\Q^{\mu}, & i=n+1, \\
0, & \text{otherwise.}\\
\end{array}\right.$

Again, by Poincar\'{e} duality, we get \begin{equation} \label{5.3}
H^{i}(\U,\Q)=\left\{ \begin{array}{ll}
\Q, & i=0,1, \\
\Q^{\mu}, & i=n+1, \\
0, & \text{otherwise.}\\
\end{array}\right.
\end{equation}  Recall that  $H_{1}(\U,\Z)=\Z^{r}$, where $r$ is the number of the irreducible components of $V$.  So $r=1$ and $V$ is irreducible.

 Recall Milnor's long exact sequence (\cite{DN}): \begin{center}
 $ \cdots \rightarrow H_{i}(\U^{c},\Q)\rightarrow H_{i}(\U^{c},\Q) \rightarrow H_{i}(\U,\Q)\rightarrow H_{i-1}(\U^{c},\Q) \rightarrow \cdots$
 \end{center} where the first morphism is multiplication by $t-1$. Then the equality (\ref{5.3}) gives that $\delta_{i}(1)\neq 0$ for all $0<i\leq n$.
\end{proof}
 
\begin{remark} \label{r5.6} 
This proposition shows that if $F_{0}$ is a rational homology manifold, we can realize the Alexander polynomial $\delta_{i}(t)$ by vanishing cycles instead of nearby cycles for $0<i\leq n$. In particular, since in this case $\phi_{f,1}\Q=0$, the long exact sequence (\ref{5.2}) breaks into short exact sequence, so we have the following $\Q$-vector space isomorphisms: \begin{center}
$H^{2n-i}_{c}(F_{0},\psi_{f}\Q)\cong H^{2n-i}_{c}(F_{0},\phi_{f}\Q)$ for $1\leq i< n$;
\end{center} \begin{center}
$ H^{n}_{c}(F_{0},\psi_{f}\Q)\cong H^{n}_{c}(F_{0},\phi_{f}\Q)\oplus \Q^{\mu}$.
\end{center}
\end{remark}
\subsection{Maxim's conjecture}
We already have a general divisibility result which restricts the prime factors of the global Alexander polynomials to those of the local Alexander polynomials of link pairs around the singular strata. More refined results can be expected under some good assumptions. Maxim posed a conjecture related to this topic (\cite{LM1}, Conjecture 4.1): When $\psi_{f}\Q$ splits, i.e., $\psi_{f}\Q$ is a finite sum of Deligne-Goresky-MacPherson sheaves (see \cite{D2}, Definition 5.4.7), the prime factors of the global Alexander polynomials are only among those of the top local Alexander polynomials of link pairs around the singular strata.

Fix a Whitney b-regular stratification for $F_{0}$. Let $\ST$ be a stratum in this stratification with complex dimension $s$. For $x\in \ST$, we have the local Milnor fibre $F_{x}$, the boundary of Milnor fiber $\partial F_{x}$ and the link $K_{x}$.
Then $\mathcal{H}^{n-s}((\psi_{f}\Q)_{\vert \ST})$ is a local system on $\ST$ with stalk  the $\Q$-vector space $H^{n-s}(F_{x})$. Let $\K_{n-s}(\ST)$ denote this local system. Let $\overline{\ST}$ be the closure of $\ST$ in $F_{0}$, hence a $2s$-(real) dimensional pseudomanifold, where $\ST$ is its dense smooth part. Then we can define the $\overline{m}$-perversity intersection chain sheaf $IC_{\overline{m}}(\overline{\ST},\K_{n-s}(\ST))$.

We say that $\psi_{f}\Q$ splits if there is a decomposition in the category of (shifted) perverse sheaves:
\begin{center}
$\psi_{f}\Q\cong \oplus_{\ST}IC_{\overline{m}}(\overline{\ST},\K_{n-s}(\ST))[-s-n]$,
\end{center} where $\ST$ runs over all the strata of $F_{0}$.

\begin{remark} In fact, Maxim defined the split (\cite{LM1}) for the Sabbah specialization complex rather than for nearby cycles. By using the forgetful functor, it follows that the Sabbah specialization complex splits if and only if its underlying nearby cycles split. 
\end{remark}

\begin{lem}(\cite{LM1}, Theorem 3.3) $\psi_{f}\Q$ splits if, and only if, for all $x\in \ST \subset Sing(F_{0})$ (where $\ST$ is a stratum in $Sing(F_{0})$ with complex dimension $s$), the natural homomorphism \begin{center}
$H_{n-s}(F_{x})\rightarrow H_{n-s}(F_{x},\partial F_{x})$
\end{center}  induced by the inclusion map is an isomorphism.
\end{lem}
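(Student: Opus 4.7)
The plan is to reduce the claim to the Beilinson–Bernstein–Deligne criterion for a perverse sheaf to split as a direct sum of intermediate extensions along a given stratification. I first note that $\Q_{\CN}[n+1]$ is perverse on $\CN$, and since $\psi_{f}[-1]$ preserves perverse sheaves, $\psi_{f}\Q[n]$ is a perverse sheaf on $F_{0}$, constructible with respect to the fixed Whitney stratification. On the smooth open dense stratum of $F_{0}$, $\psi_{f}\Q$ is the shifted constant sheaf (no vanishing cohomology at smooth points), while on a singular stratum $\ST$ of complex dimension $s$, the support condition of perversity together with standard local triviality of the Whitney stratification forces $\tilde H^{j}(F_{x})=0$ for $j>n-s$, so the only possibly non-trivial cohomology sheaf of $\psi_{f}\Q$ along $\ST$ is $\K_{n-s}(\ST)$ in degree $n-s$.

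Next I would identify the stalk-to-costalk attaching morphism along $\ST$. For $x\in\ST$, the stalk of $\psi_{f}\Q$ at $x$ computes $H^{\ast}(F_{x})$. Using Verdier self-duality of $\psi_{f}\Q[n]$ (up to a shift and the involution $t\mapsto t^{-1}$, inherited from self-duality of $\Q_{\CN}[n+1]$) together with the conic structure of the Whitney stratification near $x$, the costalk at $x$ is identified with the relative cohomology $H^{\ast}(F_{x},\partial F_{x})$. In the critical perverse degree $-s$, the canonical attaching morphism from stalk to costalk then becomes precisely the map $H_{n-s}(F_{x})\to H_{n-s}(F_{x},\partial F_{x})$ induced by the inclusion of the pair.

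Now I would apply the BBD-type criterion: a perverse sheaf $\mathcal{P}$ constructible with respect to a stratification, with generic local systems $\mathcal{L}_{\ST}$, decomposes as $\bigoplus_{\ST} IC_{\overline{m}}(\bar{\ST},\mathcal{L}_{\ST})$ if and only if each intermediate-extension summand appears in $\mathcal{P}$ as both a subobject and a quotient, a condition controlled by the attaching morphism above. For the forward direction, if $\psi_{f}\Q$ splits, then the cosupport condition satisfied by the middle extension summand on $\ST$ forces $H_{n-s}(F_{x})\to H_{n-s}(F_{x},\partial F_{x})$ to be an isomorphism. For the converse, I would proceed by induction on the dimension of strata, from top to bottom: at each step, the obstruction to peeling off $IC_{\overline{m}}(\bar{\ST},\K_{n-s}(\ST))[-s-n]$ as a direct summand is measured exactly by the failure of that attaching morphism to be an isomorphism, and the hypothesis kills the obstruction, so the induction closes.

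The principal technical obstacle is the identification of the costalk of $\psi_{f}\Q$ at $x\in\ST$ with $H^{\ast}(F_{x},\partial F_{x})$, realizing the stalk-to-costalk map as $H_{n-s}(F_{x})\to H_{n-s}(F_{x},\partial F_{x})$. This requires the Verdier self-duality of $\psi_{f}\Q[n]$ (up to shift and involution) and the identification, via the conic structure theorem for Whitney stratifications, of the link of $x$ in $F_{0}$ with $\partial F_{x}$. With these in hand, the long exact sequence of the pair $(F_{x},\partial F_{x})$ realizes the attaching morphism explicitly, and both directions then follow from the BBD criterion.
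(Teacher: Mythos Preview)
The paper does not prove this lemma; it is quoted from \cite{LM1}, Theorem 3.3, and used as a black box. So your proposal is not being compared against an argument in the present paper but against the original one in \cite{LM1}, which does proceed essentially as you outline: one uses the Cappell--Shaneson/BBD splitting criterion for a self-dual perverse sheaf, reducing the decomposition question to a local stalk--costalk condition on each singular stratum.

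Your overall strategy is sound, but there is a genuine error in the step you yourself flag as the principal obstacle. You claim that the conic structure theorem identifies ``the link of $x$ in $F_{0}$ with $\partial F_{x}$.'' This is false: the link of $x$ in $F_{0}$ is $K_{x}=S^{2n+1}_{\epsilon}\cap F_{0}$, a singular $(2n-1)$-dimensional stratified space, whereas $\partial F_{x}=S^{2n+1}_{\epsilon}\cap f^{-1}(\delta)$ is the smooth boundary of the Milnor fibre. They are not homeomorphic; rather $\partial F_{x}$ is a fibre of the Milnor fibration $S^{2n+1}_{\epsilon}\setminus K_{x}\to S^{1}$. The correct derivation of the costalk is the one you already partially invoke: Verdier self-duality of $\psi_{f}\Q[n]$ gives $H^{k}(i_{x}^{!}\psi_{f}\Q)\cong H^{2n-k}(F_{x})^{\vee}$, and then Poincar\'e--Lefschetz duality on the compact $2n$-manifold with boundary $F_{x}$ yields $H^{2n-k}(F_{x})^{\vee}\cong H^{k}(F_{x},\partial F_{x})$. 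No identification of $K_{x}$ with $\partial F_{x}$ is needed (or available). With this correction in place, the stalk-to-costalk map in the critical degree is indeed the map of the pair, and your induction on strata, together with the splitting criterion, goes through.
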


\begin{lem} \label{l5.9} Let $f:(X,x)\rightarrow (\C,0)$ be a polynomial map, where $X$ is a small open ball around the point $x$ in a $(n+1)$-complex dimensional smooth algebraic variety. Set $X_{0}=f^{-1}(0)\cap X$.
Let $F$ be the corresponding Milnor fibre, and let $T^{n}$ be the corresponding monodromy automorphism on $H^{n}(F)$ induced by the Milnor fibration. If the map $H^{n}(F,\partial F)\rightarrow H^{n}(F)$ is an isomorphism, then $T^{n}$ does not have $1$ as an eigenvalue.
\end{lem}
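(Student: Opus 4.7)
The plan is to exploit the geometric variation map and the Poincaré--Lefschetz self-duality of the Milnor fibre in order to convert the hypothesis on $j^{*}: H^{n}(F,\partial F) \to H^{n}(F)$ into a statement about the monodromy operator $T^{n}$.

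First I would set up the variation. Because the geometric monodromy $h:\bar F \to \bar F$ restricts to the identity on $\partial F$, for any $n$-cocycle $\alpha$ on $F$ the difference $h^{*}\alpha - \alpha$ vanishes on $\partial F$ at the cochain level and thus represents a class
\[
V^{*}\alpha := [h^{*}\alpha - \alpha] \in H^{n}(F,\partial F).
\]
This defines the variation map $V^{*}: H^{n}(F) \to H^{n}(F,\partial F)$, and an immediate cochain-level computation gives the key identity
\[
T^{n} - I \;=\; j^{*}\circ V^{*}\quad\text{as endomorphisms of } H^{n}(F).
\]
Therefore, under the hypothesis that $j^{*}$ is an isomorphism, it suffices to prove that $V^{*}$ is also an isomorphism; for then $T^{n}-I$ is a composition of isomorphisms, hence invertible, which is equivalent to $T^{n}$ having no eigenvalue equal to $1$.

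The second and main step is to identify $V^{*}$ as the Verdier/Poincaré--Lefschetz dual of $j^{*}$. Poincaré--Lefschetz duality on the compact oriented $2n$-dimensional manifold with boundary $\bar F$ furnishes a non-degenerate pairing
\[
\langle \cdot,\cdot\rangle : H^{n}(F,\partial F)\otimes H^{n}(F) \to H^{2n}(F,\partial F) \cong \Q,
\]
realized by cup product followed by evaluation on the fundamental class $[F,\partial F]$. Using that $h$ is orientation-preserving, fixes $\partial F$ pointwise, and satisfies $h_{*}[F,\partial F]=[F,\partial F]$, a direct cochain calculation yields the adjointness relation
\[
\langle V^{*}\alpha,\beta\rangle \;=\; (-1)^{n}\,\langle \beta \smile W^{*}\beta',\,[F,\partial F]\rangle
\]
with $W^{*}=(h^{-1})^{*}-I$, and the algebraic identity $V^{*}=-W^{*}\circ T^{n}$ (because $T^{n}$ is invertible). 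Consequently $V^{*}$ and $W^{*}$ have the same rank, and together they fit into a transpose relation against $j^{*}$ under the Poincaré--Lefschetz pairing. More conceptually, this reflects the Verdier self-duality of the shifted perverse sheaf $\psi_{f}\Q_{X}[n+1]$ on $X_{0}$: at the point stalk $x$, this self-duality exchanges the stalk $i_{x}^{*}$ (giving $H^{n}(F)$) with the costalk $i_{x}^{!}$ (giving $H^{n}(F,\partial F)$), and sends the natural map $i_{x}^{!}\to i_{x}^{*}$ (which realizes $j^{*}$) to its dual (which realizes $V^{*}$). Hence $\operatorname{rank}(V^{*}) = \operatorname{rank}(j^{*})$.

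Combining the two steps, the hypothesis forces $V^{*}$ to be an isomorphism, so $T^{n}-I = j^{*}\circ V^{*}$ is an isomorphism, and $T^{n}$ does not admit $1$ as an eigenvalue. The main obstacle will be making the duality between $V^{*}$ and $j^{*}$ precise with correct signs and monodromy twists; this can be done either by a careful cochain-level calculation with the Poincaré--Lefschetz pairing (as sketched above) or, more conceptually, by invoking the self-duality of the perverse nearby cycle complex together with the compatibility of the monodromy filtration with Verdier duality, in the spirit of Saito's theory of mixed Hodge modules used elsewhere in the paper.
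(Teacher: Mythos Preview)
Your overall strategy---factor $T^{n}-I$ as a composition and then use a duality to show both factors are isomorphisms---is precisely the plan of the paper's proof. The gap is in the duality step.

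The displayed ``adjointness relation'' is garbled: the symbol $\beta'$ is undefined and $\alpha$ has vanished from the right-hand side, so nothing is actually proved there. The follow-up identity $V^{*}=-W^{*}\circ T^{n}$ does not typecheck, since the left side lands in $H^{n}(F,\partial F)$ while the right side is an endomorphism of $H^{n}(F)$. More seriously, the conceptual claim is not correct as stated: under the self-duality of $\psi_{f}\Q[n]$, the adjunction map $i_{!}i^{!}\psi_{f}\Q\to\psi_{f}\Q$ (which realizes $j^{*}$) dualizes to the unit $\psi_{f}\Q\to i_{*}i^{*}\psi_{f}\Q$, \emph{not} to the variation. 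So self-duality of $\psi_{f}$ alone does not give $\operatorname{rank}(V^{*})=\operatorname{rank}(j^{*})$.

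The paper circumvents this by passing through the vanishing cycles $\phi_{f}\Q$ and the canonical maps $can:\psi_{f}\to\phi_{f}$, $var:\phi_{f}\to\psi_{f}$. Since $X_{0}$ is contractible, $can$ induces an isomorphism on $H^{n}(X_{0},-)$; a small commutative square then converts the hypothesis on $j^{*}$ into the statement that $i_{*}i^{!}can$ is an isomorphism. The duality that \emph{does} hold is $\mathcal{D}(i_{*}i^{!}can)=i_{*}i^{-1}var\circ\mathcal{D}$, using that $\phi_{f}\Q[n]$ is self-dual; this forces $i_{*}i^{-1}var$ to be an isomorphism. One then reads off $T^{n}-I$ from the commutative square expressing $can\circ var=T-\mathrm{id}$ on $\phi_{f}$. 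In short, the detour through $\phi_{f}$ is exactly what makes the duality between ``$j^{*}$-type'' and ``variation-type'' maps precise; your attempt to relate $j^{*}$ and $V^{*}$ directly on $\psi_{f}$ skips this and does not go through as written.
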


\begin{proof} Let $i$ be the inclusion of point $x$  to $X_{0}$ and $\Q$ be the constant sheaf on $X$. We have the following commutative diagram ($n\geq 1$):
\begin{center}
$\xymatrix{
H^{n}(X_{0},Ri_{\ast}i^{!}\psi_{f}\Q)  \ar[r]^{(1)} \ar[d]^{i_{\ast}i^{!} can}  & H^{n}(X_{0},\psi_{f}\Q)  \ar[d]^{can} \\
H^{n}(X_{0},Ri_{\ast}i^{!}\phi_{f}\Q)  \ar[r]^{(2)}                          & H^{n}(X_{0},\phi_{f}\Q)
}$
\end{center}
Here (1) is an isomorphism since this is exactly the map $H^{n}(F,\partial F)\rightarrow H^{n}(F)$ (\cite{DS}); the map $can$ is naturally an isomorphism since $X_{0}$ is contractible. Note that these four cohomology groups are all isomorphic (as $\Q$-vector spaces) to $H^{n}(F)$, so (2) and $i_{\ast}i^{!} can$ are also  isomorphisms.

Recall the commutative diagram (\cite{D2}, Remark 4.2.12)
\begin{center}
$\xymatrix{
H^{n}(X_{0},Ri_{\ast}i^{-1}\phi_{f}\Q)  \ar[r]^{i_{\ast}i^{-1} var} \ar[d]^{T^{n}-id}  & H^{n}(X_{0},Ri_{\ast}i^{-1}\psi_{f}\Q)  \ar[d]^{i_{\ast}i^{-1} can} \\
H^{n}(X_{0},Ri_{\ast}i^{-1}\phi_{f}\Q)  \ar[r]^{id}                                    & H^{n}(X_{0},Ri_{\ast}i^{-1}\phi_{f}\Q)
}$
\end{center}
Since $\phi_{f}\Q[n]$ is a self dual, perverse sheaf, and $\mathcal{D} i_{\ast}i^{!} can =i_{\ast}i^{-1} var \mathcal{D}$, the map $i_{\ast}i^{-1} var$ in the diagram is just the dual of $i_{\ast}i^{!} can$, hence an isomorphism. Now $i_{\ast}i^{-1} var$, $i_{\ast}i^{-1} can$ and $id$ in the diagram are all isomorphisms, then $T^{n}-id$ is also an isomorphism. Therefore $T^{n}$ does not have 1 as eigenvalue. 
\end{proof}

\begin{prop} \label{p5.10} Assume that the polynomial $f: \CN\rightarrow \C$ is transversal at infinity.
If $\psi_{f}\Q$ splits, then $F_{0}$ is a rational homology manifold. Moreover, $V$ is irreducible and $\delta_{i}(1)\neq 0$ for all $0<i\leq n$.
\end{prop}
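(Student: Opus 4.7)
I plan to prove the proposition in two stages: first derive from the splitting hypothesis that $F_{0}$ is a rational homology manifold, then invoke Proposition \ref{p5.5} directly to obtain that $V$ is irreducible and $\delta_{i}(1)\neq 0$ for $0<i\leq n$.

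For the first stage, fix $x\in \ST \subseteq \mathrm{Sing}(F_{0})$, where $\ST$ is a stratum of complex dimension $s<n$. Whitney regularity furnishes a local product decomposition near $x$: choosing a transversal slice $N$ to $\ST$ at $x$ of complex codimension $s$, the restriction $f|_{N}:(N,x)\to (\C,0)$ has an isolated singularity of complex dimension $n-s$; the local Milnor fibre $F_{x}$ of $f$ at $x$ is homotopy equivalent to the transversal Milnor fibre $F_{x}^{tr}$ of $f|_{N}$; and the link of $x$ in $F_{0}$ is homeomorphic to the join $S^{2s-1}*L_{x}^{tr}$, where $L_{x}^{tr}$ denotes the link of the isolated singularity of $f|_{N}$.

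Next I combine Lemma \ref{l5.9} with the splitting criterion (the unlabeled lemma immediately preceding Lemma \ref{l5.9}). Since $F_{x}$ is a compact manifold of real dimension $2(n-s)$ with boundary, Poincar\'e--Lefschetz duality converts the isomorphism $H_{n-s}(F_{x}) \xrightarrow{\sim} H_{n-s}(F_{x},\partial F_{x})$ guaranteed by the splitting into the isomorphism $H^{n-s}(F_{x},\partial F_{x}) \xrightarrow{\sim} H^{n-s}(F_{x})$ required by Lemma \ref{l5.9} (applied in complex dimension $n-s$). Thus $T^{n-s}-\mathrm{id}$ is invertible on $H^{n-s}(F_{x}^{tr})$. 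Since the transversal singularity is isolated, $F_{x}^{tr}$ is $(n-s-1)$-connected, and the Wang exact sequence of its Milnor fibration gives $\tilde{H}_{q}(L_{x}^{tr};\Q)=0$ for $0<q<2(n-s)-1$, so $L_{x}^{tr}$ is a rational homology sphere. Because $S^{2s-1}*L_{x}^{tr}\simeq \Sigma^{2s}(L_{x}^{tr})$ is the $2s$-fold suspension of a rational homology sphere, the link of $x$ in $F_{0}$ is itself a rational homology sphere; together with the trivial case of smooth points, this shows that $F_{0}$ is a rational homology manifold, whence Proposition \ref{p5.5} finishes the argument.

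The main obstacle will be the technical bookkeeping around the transversal slice: identifying $F_{x}\simeq F_{x}^{tr}$ as the Milnor fibres entering Lemmas on splitting and \ref{l5.9}, justifying the join decomposition $\mathrm{Link}(x,F_{0})\cong S^{2s-1}*L_{x}^{tr}$, and reconciling the homological isomorphism from the splitting criterion with the cohomological one demanded by Lemma \ref{l5.9}. All of these follow from the conical structure theorem for Whitney stratifications together with standard Poincar\'e--Lefschetz duality, but assembling them cleanly is the only place where care is required.
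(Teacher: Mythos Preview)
Your approach is correct and takes a genuinely different route from the paper's own proof.

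The paper argues by \emph{downward induction} on the dimension of the stratum $\ST$. For the top singular stratum it reduces (via a transversal slice) to the isolated-singularity case. For the inductive step---written out only for a $0$-dimensional stratum $x$---it first uses the induction hypothesis to conclude that the link $K_x$ is already a rational homology \emph{manifold}; it then combines Milnor's connectivity of $K_x$, Poincar\'e duality on $K_x$, Alexander duality in $S^{2n+1}$, and the Wang sequence of the \emph{full} Milnor fibration to show that $T^i-\mathrm{id}$ is invertible on $H_i(F_x)$ for $0<i<n$, invoking Lemma~\ref{l5.9} only in the top degree $i=n$. These together force $K_x$ to be a rational homology sphere, and Proposition~\ref{p5.5} finishes.

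You instead bypass the induction entirely by working in the transversal slice at \emph{every} singular point: the slice singularity is already isolated, so a single application of Lemma~\ref{l5.9} in complex dimension $n-s+1$ (followed by the Wang sequence for the transversal Milnor fibration) shows that $L_x^{tr}$ is a rational homology sphere, and the join formula $\mathrm{Link}(x,F_0)\cong S^{2s-1}\!*L_x^{tr}$ transports this to the ambient link. This is shorter and more geometric; the price is that you must import the local product/join structure from Thom--Mather theory, whereas the paper's argument stays closer to the intrinsic data of the full Milnor fibre.

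One small point to tighten: you write that ``$F_x$ is a compact manifold of real dimension $2(n-s)$'', but the local Milnor fibre of $f$ has real dimension $2n$; it is $F_x^{tr}$ that has dimension $2(n-s)$. Your Poincar\'e--Lefschetz step is valid only for $F_x^{tr}$, so you should either note that the splitting criterion in the unlabeled lemma is really a statement about the transversal pair $(F_x^{tr},\partial F_x^{tr})$---which is exactly how the paper itself uses it in the base case---or insert the product identification $(F_x,\partial F_x)\simeq (D^{2s},S^{2s-1})\times(F_x^{tr},\partial F_x^{tr})$ to pass from one to the other. Either way your argument goes through.
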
 
\begin{proof} In order to prove this proposition, we need to show that for all $x\in Sing(F_{0})$,  the link $K_{x}$ is a rational homology sphere. Assume $x\in \ST$, where $\ST$ is a stratum in $Sing(F_{0})$. Let us prove this claim by induction down on the dimension of $\ST$. Let $k$ be the complex dimension of $Sing(F_{0})$. 

For $s=k$, $x$ is in the top dimension stratum of $Sing(F_{0})$, the link pair of $x$ can be viewed as the link pair for an isolated singular point. Then the Milnor fibre $F_{x}$ is homotopy equivalent to a bouquet of $(n-k)$-spheres. In this case, $H^{n-k}(F_{x},\partial F_{x})\rightarrow H^{n-k}(F_{x})$ being an isomorphism implies that the link $K_{x}$ is a homology sphere.

Assume the claim is true when $\dim \ST>0$ for induction. 

 Let the point $x$ be a 0-dimensional stratum in $Sing(F_{0})$. If $y\in K_{x}$, then $y$ belongs to a stratum in $F_{0}$ with dimension $\geq 1$, and the local link of $y$ in $K_{x}$ is a rational homology sphere by induction. Therefore $K_{x}$ is a rational homology manifold, and Poincar\'{e} duality holds for $K_{x}$. Recall that $K_{x}$ is $(n-2)$-connected (\cite{M}), so by Poincar\'{e} duality, we have \begin{center}
$H^{i}(K_{x})=0$,  for $n<i<2n-1$.
\end{center} Then, by Alexander duality, \begin{center}
$H_{j}(S^{2n+1}\setminus K_{x})=H^{2n+1-j}(S^{2n+1}, K_{x})=H^{2n-j}(K_{x}) =0$, for $1<j<n$.
\end{center}  where $S^{2n+1}$ is a small sphere centered at $x$ in $\CN$. Then the Wang sequence: \begin{center}
$\cdots \to H_{j+1}(S^{2n+1}\setminus K_{x}) \to H_{j}(F_{x}) \overset{T-id}{\to} H_{j}(F_{x}) \to H_{j}(S^{2n+1}\setminus K_{x}) \to \cdots$,
\end{center} shows that $T^{i}$ does not have $1$ as eigenvalue for $0<i<n$. It follows from Lemma \ref{l5.9} that if $\psi_{f}\Q$ splits, $T^{n}$ also does not have $1$ as eigenvalue. Thus $K_{x}$ is a rational homology sphere.   Then apply Proposition \ref{p5.5}.
\end{proof}

\begin{remark} This proposition shows that, if $\psi_{f}\Q$ splits, then $\phi_{f,1}\Q=0$ ($F_{0}$ is a rational homology manifold), hence $\psi_{f}\Q= \phi_{f}\Q \oplus \Q_{F_{0}}$.  Therefore, $\phi_{f}\Q$ also splits : \begin{center}
$\phi_{f}\Q\cong\oplus_{\ST}IC_{\overline{m}}(\overline{\ST},\K_{n-s}(\ST))[-s-n]$,
\end{center}  where $\ST$ runs over all the strata of $Sing(F_{0})$. 
\end{remark}

\textbf{Proof of Theorem \ref{t1.4}.} Assume that $\psi_{f}\Q$ splits, then  $\phi_{f}\Q$ also splits, hence \begin{center}
$H^{2n-i}_{c}(F_{0},\phi_{f}\Q)\cong\oplus_{\ST} H^{2n-i}_{c}(\overline{\ST},IC_{\overline{m}}(\overline{\ST},\K_{n-s}(\ST)[-s-n]))$.
\end{center} where $\ST$ runs over all the strata of $Sing(F_{0})$.

Proposition \ref{p5.10} shows that $F_{0}$ is a rational homology manifold. Then, according to Remark \ref{r5.6}, we can realize the Alexander polynomials by vanishing cycles instead of nearby cycles for $0<i\leq n$. If a prime element $\gamma\in \Gamma$ divides $\delta_{i}(t)$, then there exists a stratum $\ST$ in $F_{0}$ with dimension $s\geq n-i$, such that $\gamma$ divides the Alexander polynomial associated with $H^{n-s-i}_{c}(\overline{\ST},IC_{\overline{m}}(\overline{\ST},\K_{n-s}(\ST)))$. Fix this stratum $\ST$, and write $\K_{n-s}(\ST)$ for short as $\K_{n-s}$.
 
  Denote  $\G=IC_{\overline{m}}(\overline{\ST},\K_{n-s})$. In order to prove the theorem, we need to show that the Alexander polynomial associated with $H^{n-s-i}_{c}(\overline{\ST},\G)$ is some power of $\Delta^{\ST}_{n-s}(t)$, where $\Delta^{\ST}_{n-s}(t)$ is the top local Alexander polynomial associated to the singular stratum $\ST$. This follows from the following more general fact:
 
\textit{Claim}:  For any $x \in \overline{\ST} $ and any $j\in \Z$,  the corresponding Alexander polynomial of $\mathcal{H}^{j}(\G)_{x}$  is some power of $\Delta^{\ST}_{n-s}(t)$ (the power is 0 except for finitely many $j$).

 \textit{Proof of the claim.}  Assume $x\in \V$, where $\V$ is a stratum in $\overline{\ST}$. Let us prove the claim by induction down on the complex dimension of $\V$.
 
 If $\dim\V=s$, then $\V=\ST$ and $x\in \ST$.  $\K_{n-s}$ is a local system on $\ST$, so the claim is true.
      
  Assume the claim is true when $\dim\V>0$ for induction. Now let us prove the claim when the point $x$ itself is a stratum in $\overline{\ST}$. By Proposition 4.1.21 in \cite{B}, we have \begin{center} 
  $\mathcal{H}^{-j}(\G)_{x}\cong \left\{ \begin{array}{ll}
IH^{\overline{m}}_{j-1}(K_{x},\K_{n-s})\cong H^{-j}(K_{x},\G\vert_{K_{x}}),   & i\geq 2s-\overline{m}(2s) \\
0, & i<2s-\overline{m}(2s)\\
\end{array}\right.$
  \end{center} where $K_{x}$  is the link of $x$ in $\overline{\ST}$. 
  For arbitrary $ y \in K_{x}$, $y$ belongs to a stratum in $\overline{\ST}$ with dimension $\geq 1$. Then, by induction, the Alexander polynomial of $\mathcal{H}^{j}(\G)_{y}$ is some power of $\Delta^{\ST}_{n-s}(t)$ (the power is 0 except for finitely many $j$). Using the compactly supported hypercohomology long exact sequence on the strata of $K_{x}$ and Lemma \ref{l5.1}, it is easy to see that the Alexander polynomial of $H^{-j}(K_{x},\G\vert_{K_{x}}) $  is some power of $\Delta^{\ST}_{n-s}(t)$, hence so is $\mathcal{H}^{-j}(\G)_{x}$. (Note that the involution does not change the local Alexander polynomial.)  The claim is proved.
  
 Since the claim is true, using again the compactly supported hypercohomology long exact sequence on the strata of $\overline{\ST}$ and Lemma \ref{l5.1}, we get that the Alexander polynomial associated with  $H^{n-s-i}_{c}(\overline{\ST},\G)$ is some power of $\Delta^{\ST}_{n-s}(t)$. Theorem \ref{t1.4} is thus proved.
  
\begin{remark} Assume that $f$ is transversal at infinity and $\psi_{f}\Q$ splits. Let $\ST$ be a stratum in $Sing(F_{0})$ with complex dimension $s\geq 1$. 
If the local system $\K_{n-s}(\ST)$ is a constant sheaf (e.g., $\ST$ is simply connected), then $IH^{\overline{m}}_{0}(\overline{\ST},\Q)=\Q$, so  we get a lower bound for $\delta_{n-s}(t)$: \begin{center}
$\Delta^{\ST}_{n-s}(t)\mid \delta_{n-s}(t)$.
\end{center} Moreover, the roots of $\Delta^{\ST}_{n-s}(t)$ are roots of unity of order $d$ except $1$, and the corresponding top local Alexander module is semi-simple.
\end{remark}
It is natural to ask what are the multiplicities of the roots of global Alexander polynomials.  The next corollary gives a partial answer to this question. 

 Assume the following condition is satisfied: All the strata $\ST\subset F_{0}$ are simply connected and their closures $\overline{\ST}$ are rational homology manifolds.  Then $\psi_{f}\Q$ splits (\cite{LM1}, Theorem 3.11). Since $\overline{\ST}$ is a rational homology manifold, $IH^{\overline{m}}_{i}(\overline{\ST},\Q)\cong H_{i}(\overline{\ST},\Q)$. Set $\nu_{j}(\overline{\ST})=rankH_{j}(\overline{\ST})$. Then we get the following corollary:
\begin{cor} \label{c5.13} Assume that the polynomial $f: \CN\rightarrow \C$ is transversal at infinity. If all the strata $\ST\subset F_{0}$ are simply connected and their closures $\overline{\ST}$ are rational homology manifolds, then \begin{center}
 $\delta_{i}(t)=\prod_{\ST,\dim \ST=s\geq n-i}{(\Delta^{\ST}_{n-s}(t))}^{\nu_{i+s-n}(\overline{\ST})}$,    for $0<i<n$;
 \end{center}
 \begin{center}
 $ \delta_{n}(t)\mid \prod_{\ST}{(\Delta^{\ST}_{n-s}(t))}^{\nu_{s}(\overline{\ST})}$,    for $i=n$,
 \end{center} where $\ST$ runs over all the strata of $Sing(F_{0})$.
\end{cor}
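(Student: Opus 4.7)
My plan is to exploit the decomposition already produced in the proof of Theorem \ref{t1.4}, then invoke the extra rigidity coming from the two hypotheses (simple connectivity of strata and rational homology manifold structure of their closures) to compute each summand explicitly. Under these hypotheses, \cite{LM1}, Theorem 3.11 guarantees that $\psi_f \Q$ splits, so Proposition \ref{p5.10} applies and $F_0$ is a rational homology manifold; in particular $\phi_{f,1}\Q=0$ and Remark \ref{r5.6} is available.

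The core computation is the following. Since $\ST$ is simply connected, the local system $\K_{n-s}(\ST)$ on $\ST$ is trivial as a $\Q$-local system, with stalk $H^{n-s}(F_x)$ (the $\Gamma$-action being the local monodromy action at a point $x\in \ST$). Because $\overline{\ST}$ is a rational homology manifold and the local system extends trivially,
\begin{equation*}
IC_{\overline{m}}(\overline{\ST},\K_{n-s}(\ST))\;\cong\; \Q_{\overline{\ST}}[2s]\otimes_{\Q} H^{n-s}(F_x),
\end{equation*}
as complexes of $\Gamma$-modules, where $\Gamma$ acts only on the second tensor factor. Shifting by $[-s-n]$ and applying compactly supported hypercohomology, Poincar\'e duality on the $2s$-dimensional oriented rational homology manifold $\overline{\ST}$ gives
\begin{equation*}
H^{2n-i}_c\bigl(\overline{\ST},IC_{\overline{m}}(\overline{\ST},\K_{n-s}(\ST))[-s-n]\bigr)\;\cong\; H_{s+i-n}(\overline{\ST},\Q)\otimes_{\Q} H^{n-s}(F_x),
\end{equation*}
whose Alexander polynomial as a $\Gamma$-module is $(\Delta^{\ST}_{n-s}(t))^{\nu_{i+s-n}(\overline{\ST})}$ (the involution and transposition that relate $\psi^S_f\Q$ and $\psi_f\Q$ do not change the polynomial since all roots are roots of unity, as already noted in the remark following Theorem \ref{t1.3}). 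This summand vanishes unless $0\le i+s-n\le 2s$, i.e.\ unless $s\ge n-i$, matching the indexing of the product in the corollary.

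For $0<i<n$ the conclusion follows by combining this computation with Remark \ref{r5.6}: $H_i(\U^c)\cong H^{2n-i}_c(F_0,\psi_f\Q)\cong H^{2n-i}_c(F_0,\phi_f\Q)$, and the splitting of $\phi_f\Q$ over strata of $\mathrm{Sing}(F_0)$ (Remark after Proposition \ref{p5.10}) decomposes the right-hand side into the summands computed above. Multiplying over all singular strata with $s\ge n-i$ gives the claimed exact formula for $\delta_i(t)$.

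For $i=n$, Theorem \ref{t1.1} only guarantees that $H_n(\U^c)$ is a quotient of $H^n_c(F_0,\psi_f\Q)$, and Remark \ref{r5.6} further identifies the latter with $H^n_c(F_0,\phi_f\Q)\oplus \Q^\mu$. The main subtlety is the spurious $\Q^\mu$ summand, on which $t$ acts trivially, so a naive divisibility would contribute an undesired factor $(t-1)^\mu$. This obstacle is removed by Proposition \ref{p5.5}: under the rational homology manifold hypothesis $\delta_n(1)\ne 0$, so $(t-1)$ cannot divide $\delta_n(t)$, and $H_n(\U^c)$ must factor through the $H^n_c(F_0,\phi_f\Q)$ summand. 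Applying the same strata-wise calculation as above (now with no restriction on $s$, so that the exponent is $\nu_s(\overline{\ST})$ from $H_{s+n-n}(\overline{\ST})=H_s(\overline{\ST})$) yields the divisibility statement for $\delta_n(t)$. The only truly delicate point in the argument is bookkeeping of the $\Gamma$-structure through the intersection cohomology decomposition; everything else reduces to Poincar\'e duality and the K\"unneth identification of the tensor product as a $\Gamma$-module.
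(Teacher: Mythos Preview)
Your proposal is correct and follows essentially the same approach as the paper. The paper's argument for the corollary is extremely terse: it simply records that the hypotheses force $\psi_f\Q$ to split (via \cite{LM1}, Theorem 3.11) and that $IH^{\overline{m}}_i(\overline{\ST},\Q)\cong H_i(\overline{\ST},\Q)$ for a rational homology manifold, leaving the reader to combine these with the machinery of Theorem \ref{t1.4}. You have carried out exactly that combination, and in fact you are more careful than the paper about the $i=n$ case, explicitly using Proposition \ref{p5.5} to kill the extraneous $(t-1)^\mu$ contribution.
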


\begin{example} Consider the homogeneous polynomial in $\CN$ defined by \begin{center}
$f(x_{1},\cdots,x_{n+1})=x_{1}^{d}+\cdots+x_{n+1-k}^{d}$ with $d>1$, $0\leq k<n+1$.
\end{center}  $\psi_{f}\Q$ splits, if and only if, $d=2$ and $n+1-k$ is odd. In fact, the stratification of $F_{0}$ is $F_{0} \supset \ST$, where $\ST=\lbrace (x_{1},\cdots,x_{n+1})\in \CN\mid x_{i}=0 \text{  for } 1\leq i\leq n+1-k \rbrace$. Then the stratum $\ST$ is isomorphic with $\C^{k}$. For any $ x \in \ST$, $K_{x}$ is a homology sphere if and only if $d=2$ and $n+1-k$ is odd (\cite{D1}, (3.4.10)). 

When $d=2$ and $n+1-k$ is odd, Corollary \ref{c5.13} gives that $\delta_{n-k}(t)={\Delta^{\ST}_{n-k}(t)}=t+1$ and $\delta_{i}(t)=1$ for $i\neq n-k$, $0<i\leq n$.
\end{example}

\section{Mixed Hodge Structures}
Dimca and Libgober showed that for polynomials transversal at infinity, there exist MHS on the Alexander modules of the hypersurface complement (\cite{DL}, Theorem 1.5). In this section, we will recover this MHS by using the nearby cycle functor in the category of  Mixed Hodge Modules. This is the most important application of realizing Alexander modules by the nearby cycles and also a very natural way to get this MHS. 

Saito  associated (\cite{S}) to a complex algebraic variety $X$ an abelian category $\MHM(X)$, the category of algebraic mixed Hodge modules on $X$. There is a forgetful functor  $$\mbox{rat}: \MHM(X)\rightarrow Perv_{\Q}(X),$$ assigning a perverse sheaf to a mixed Hodge module. This functor extends to the derived level, and we have:

\begin{thm}(\cite{S}, Theorem 1.3) For each complex algebraic  variety X, we have $D^{b}\MHM(X)$, the bounded derived category of complexes of mixed Hodge modules with a forgetful functor \begin{center}
$\mbox{rat}: D^{b}\MHM(X)\rightarrow D^{b}_{c}(X,\Q)$,
\end{center}
which associates to mixed Hodge modules their underlying $\Q$-complexes, such that \begin{center}
$\mbox{rat}(\MHM(X))\subseteq Perv_{\Q}(X)$, i.e., 
$\mbox{rat}(H^{j}(\M^{\bullet}))= \text{}  ^{p}\mathcal{H}^{j}(rat(\M^{\bullet}))$.
\end{center} Moreover, the functor $\psi_{f}$ is naturally lifted to a functor on $D^{b}\MHM(X)$, in a compatible way with the corresponding functor on the underlying rational complex, i.e., $^{p}\psi_{f}\circ \mbox{rat}=\mbox{rat}\circ \psi_{f}$, where $^{p}\psi_{f}=\psi_{f}[-1]$. 
\end{thm}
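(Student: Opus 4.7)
The plan is to recognize that this statement is Saito's fundamental construction of the category of mixed Hodge modules together with its six-functor formalism, so rather than proving it from scratch I will outline the strategy Saito developed in his two long papers. The first step is to build $\MHM(X)$ by an inductive construction on the dimension of support. At a point, mixed Hodge modules should coincide with the category of graded-polarizable rational mixed Hodge structures. For general $X$, one works in the ambient category of tuples $(M,F_\bullet M,K,W_\bullet,\alpha)$ where $M$ is a regular holonomic $\mathcal{D}_X$-module with a good filtration $F_\bullet$, $K$ is a $\Q$-perverse sheaf on $X$, $\alpha:\mathrm{DR}(M)\cong K\otimes_{\Q}\C$ is a comparison isomorphism, and $W_\bullet$ is a finite increasing filtration by objects of the same kind.

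Next I would impose Saito's axioms, which are stated inductively using the Kashiwara--Malgrange $V$-filtration along an arbitrary local function $g$. For a pure (polarizable) Hodge module of weight $w$, the axioms require that the nearby cycles $\psi_g M$ and vanishing cycles $\phi_g M$, equipped with the induced filtrations $F$ and monodromy weight filtration $W^{(N)}$ centered at the appropriate weight, are again pure Hodge modules on the hypersurface $\{g=0\}$; for mixed Hodge modules one asks moreover that the relative monodromy filtration exists for $N$ on $W_\bullet$ and yields the correct weights. The forgetful functor is then simply the assignment $(M,F,K,W,\alpha)\mapsto K$, and the inclusion $\mathrm{rat}(\MHM(X))\subseteq\mathrm{Perv}_\Q(X)$ is built into the definition.

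For the derived category and the lift of $\psi_f$, I would pass to $D^b\MHM(X)$ formally as the bounded derived category of the abelian category $\MHM(X)$, and define $\psi_f^{\mathrm{MHM}}$ object-wise via the $V$-filtration along $f$, with a shift $[-1]$ included so that the functor preserves perversity. Compatibility $\mathrm{rat}\circ\psi_f^{\mathrm{MHM}}={}^p\psi_f\circ\mathrm{rat}$ follows from Malgrange--Kashiwara's comparison between the $V$-filtration on $\mathcal{D}$-modules and Deligne's topological nearby cycles under the Riemann--Hilbert correspondence. One then checks that the axioms are stable under this operation so that the result lands in $D^b\MHM$.

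The main obstacle — and the heart of Saito's work — is verifying stability of the axiomatic class under all six functors (especially $f_*$ for projective morphisms and $\psi_f,\phi_f$), and simultaneously proving a decomposition/semisimplicity theorem for pure Hodge modules, which in Saito's approach are mutually reinforcing and must be proved together by induction on dimension. Controlling the weight filtration under $\psi_f$ requires the existence of the relative monodromy filtration, and proving that $\psi_f$ sends weight $\leq w$ to weight $\leq w$ after the Tate twist uses the polarization and the hard Lefschetz theorem in the inductive step. Once these foundational results are in place, the functorial properties asserted in the statement follow formally.
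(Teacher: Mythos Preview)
The paper does not prove this statement at all: it is stated as a citation of Saito's Theorem~1.3 and used as a black box. So there is no ``paper's own proof'' to compare your proposal against. Your outline is a faithful high-level summary of Saito's construction in \cite{S} (and the longer foundational papers behind it), and as such it is an appropriate explanation of why the cited result holds; but it goes well beyond what the present paper does, which is simply to invoke the result.

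If anything, you should be aware that what you wrote is a sketch of a several-hundred-page theory, not a proof. The genuinely hard steps you flag---stability of the axioms under the six functors, existence of the relative monodromy filtration, and the intertwined decomposition/semisimplicity theorem---are precisely the content of Saito's work and cannot be filled in without reproducing large parts of it. For the purposes of this paper, the correct ``proof'' is the citation, and your outline would be more suitable as an expository remark than as a proof proposal.
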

Note that the cohomology groups with compact supports of any complex $\mathcal{M}^{\bullet}\in D^{b}\MHM(X)$ carry mixed Hodge structures.  Let $\Q$ be the constant sheaf in $D^{b}\MHM(\CN)$, then $\psi_{f}\Q\in D^{b}\MHM(F_{0})$ and $H^{2n-i}_{c}(F_{0},\psi_{f}\Q)$ naturally carries a MHS for all $i$. Recall that \begin{center}
 $H_{i}(\U^{c})\cong H^{2n-i}_{c}(F_{0},\psi_{f}\Q)$ for $i<n$.
 \end{center} Thus we obtain a MHS for $H_{i}(\U^{c})$ for $i<n$.  But $H_{n}({\U}^{c})$ is just a quotient $\Q$-vector space of $H^{n}_{c}(F_{0},\psi_{f}{\Q})$, so we can't get a MHS for $H_{n}(\U^{c})$ in this way.   We solve this problem by constructing a transversal pair (see below for a definition).

\subsection{Proof of Theorem 1.5}

Let $V^{\prime}$ be a reduced hypersurface in $\mathbb{CP} ^{n+2}$ and $H^{\prime}$ be the hyperplane at infinity. Assume $V^{\prime} $ intersects $ H^{\prime} $ transversally.  Let $L$ denote a (n+1)-dimensional linear subspace of $\mathbb{CP}^{n+2}$. If $V^{\prime}\cap L =V$, $H^{\prime}\cap L =H$ and $L$ is transversal to $V^{\prime}\cup H^{\prime}$, we call $(V^{\prime},H^{\prime},L)$ a transversal pair for $(V,H)$.  The following lemma shows that transversal pairs always exist for a given $(V,H)$ in $\CP$, with $V$ transversal to $H$.

\begin{lem} \label{l6.2} Let $V= \lbrace \widetilde{f}=0\rbrace$ be a reduced hypersurface in $\CP$ and $H=\lbrace x_{0}=0\rbrace $ be the hyperplane at infinity. Assume $V$ intersects $H$ transversally. Then there exists a transversal  pair $(V^{\prime},H^{\prime},L)$ such that $V^{\prime}\cap L =V$, $H^{\prime}\cap L =H$, i.e., this pair satisfies the following transversality conditions:

(a) $V^{\prime}$ intersects $H^{\prime}$ transversally.

(b) $L$ intersects $V^{\prime}\cup H^{\prime}$ transversally.
\end{lem}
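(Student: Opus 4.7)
The plan is to build $(V', H', L)$ explicitly via a cone construction in $\mathbb{CP}^{n+2}$. Fix homogeneous coordinates $[x_0 : \cdots : x_{n+2}]$, let $L := \{x_{n+2} = 0\} \cong \mathbb{CP}^{n+1}$, and define $V' := \{\widetilde{f}(x_0, \ldots, x_{n+1}) = 0\}$ and $H' := \{x_0 + \varepsilon\, x_{n+2} = 0\}$ for some fixed $\varepsilon \neq 0$. The required equalities $V' \cap L = V$ and $H' \cap L = H$ are then tautological. Geometrically, $V'$ is the projective cone over $V \subset L$ with apex $a := [0 : \cdots : 0 : 1]$, and the linear projection $\pi \colon \mathbb{CP}^{n+2} \setminus \{a\} \to L$, $[x_0 : \cdots : x_{n+2}] \mapsto [x_0 : \cdots : x_{n+1}]$, realizes $V' \setminus \{a\}$ as $\pi^{-1}(V)$. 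The choice $\varepsilon \neq 0$ forces $a \notin H'$, so $\pi|_{H'} \colon H' \to L$ is an isomorphism of projective $(n+1)$-spaces, carrying $V' \cap H'$ isomorphically onto $V$ and $H' \cap L$ to $H$.

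Next I would lift a Whitney stratification of $V$ in $L$ to a Whitney stratification of $V' \cup H'$ in $\mathbb{CP}^{n+2}$. Fix such a stratification $\{S_\alpha\}$ of $V$: by the stratified transversality of $V$ with $H$, each $S_\alpha$ either misses $H$ or meets it transversally in $L$; in particular no $S_\alpha$ is contained in $H$. Take as strata of $V' \cup H'$ the apex $\{a\}$, the open cylinders $\pi^{-1}(S_\alpha) \setminus H'$, the sections $\pi^{-1}(S_\alpha) \cap H'$, and $H' \setminus V'$. Whitney regularity away from $a$ is inherited from $\{S_\alpha\}$ via the local trivialization $\pi^{-1}(S_\alpha) \cong S_\alpha \times \mathbb{A}^1$ (in which $H'$ cuts each fibre transversally at a single point), while at $a$ it follows from the standard fact that a projective cone over a Whitney-stratified space is itself Whitney-stratified by the preimages of the original strata together with the apex as a $0$-dimensional stratum.

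To verify condition (a) (transversality of $V'$ with $H'$): the apex lies outside $H'$, and each cylinder stratum $\pi^{-1}(S_\alpha)$ contains the fibre direction $\partial/\partial x_{n+2}$, which, since $\varepsilon \neq 0$, is not tangent to $H'$; transversality follows immediately. For condition (b) (transversality of $L$ with $V' \cup H'$): the apex is not in $L$; each cylinder stratum has fibre direction transverse to $L = \{x_{n+2} = 0\}$; and $L$ is transverse to the distinct hyperplane $H'$. The only subtle case is the stratum $\pi^{-1}(S_\alpha) \cap H'$ at a point $p \in S_\alpha \cap H$ (the only locus where this stratum meets $L$). Here a short local-coordinate computation, using $\varepsilon \neq 0$ to parametrize $T_p H'$ inside $T_p \mathbb{CP}^{n+2}$, shows that $T_p L + T_p(\pi^{-1}(S_\alpha) \cap H') = T_p \mathbb{CP}^{n+2}$ precisely when some vector in $T_p S_\alpha$ has nonzero $x_0$-component, i.e.\ precisely when $S_\alpha$ is transversal to $H$ in $L$ at $p$. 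This is the given hypothesis, and closes the argument.

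The main obstacle is making the Whitney regularity at the apex completely rigorous; while the statement about cones is standard, it deserves a careful citation. Everything else is elementary linear algebra in local affine coordinates and dimension counts, and all the delicate geometric input (that no $S_\alpha$ lies in $H$, and that $S_\alpha \cap H$ is transversal) is an immediate consequence of the transversality of $V$ with $H$ assumed in the hypothesis.
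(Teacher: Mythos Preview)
Your proposal is correct and takes essentially the same approach as the paper, which makes the specific choice $H' = \{x_0 - x_{n+2} = 0\}$ (your $\varepsilon = -1$), builds the identical cone stratification of $V' \cup H'$, and handles the one delicate case---transversality of $L$ with the strata $\pi^{-1}(S_\alpha)\cap H'$---by a path argument equivalent to your tangent-space computation. The paper, like you, does not spell out Whitney regularity at the apex.
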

\begin{proof} We will use the notation $x=(x_{0},\cdots,x_{n+1})$. Choose \begin{center}
$V^{\prime}=\lbrace \widetilde{f}=0 \rbrace$,   $H^{\prime}=\lbrace x_{0}-x_{n+2}=0 \rbrace$,  $L=\lbrace x_{n+2}=0 \rbrace$,
\end{center}  where $x_{n+2}$ is a new variable. Here $V^{\prime}$ is defined by the same polynomial as $V$. It is obvious that $V^{\prime}\cap L =V$ and $H^{\prime}\cap L =H$.

 $V^{\prime}$ is the projective cone over $V$ with the vertex point $p=[0,0,\ldots, 0,1]$ and $L$ is the natural embedding of the $(n+1)$-dimensional projective space into $\mathbb{CP} ^{n+2}$.
 
 Fix a Whitney b-regular stratification $\varsigma$ of V. Define\begin{center}
 $\ST\times \C=\lbrace [x,x_{n+2}]\in \mathbb{CP} ^{n+2} \mid x\in \ST, x_{n+2}\in \C \rbrace\subseteq V^{\prime}$ for $\ST\in \varsigma$.
\end{center}   Then we get a stratification for $V'$ by using the strata $\ST \times \C$ and the point $p$.

 With this stratification, it is clear that $H^{\prime}$ intersects $V^{\prime}$ transversally. Indeed $p$ is not in $H^{\prime}$, and any other stratum $\ST \times \C$ is a union of lines transversal to $H^{\prime}$. Then we can stratify $V^{\prime} \cup H^{\prime}$ using the following strata: point $p$, $H^{\prime}-V^{\prime}\cap H^{\prime}$, $\ST\times \C-(\ST\times \C)\cap H^{\prime}$ and $(\ST\times \C) \cap H^{\prime}$. 
 
Next, we prove that $L$ intersects $V^{\prime} \cup H^{\prime}$ transversally (using the stratification described above). For this we notice:

- $p$ is not in $L$;

- $L$ is transversal to any stratum of the form $\ST \times \C$, since $\ST \times \C$ is a union of lines transversal to $L$. Hence $L$ is transversal to any open subset of  $\ST \times \C$;

-$L$ is transversal to $H'$, hence $L$ is transversal to any open subset of  $H'$.

It remains to prove the transversality of $L$ to a stratum of the form $(\ST \times \C) \cap H^{\prime}$ and $\dim \ST >0$.
Consider a point $q$ of the intersection of $L$ with such a stratum. We can assume $q=[0,1,0,\cdots,0]$. This point $q$ is then in the intersection of $\ST$ and $H$ regarded in $\CP$. Since $V$ intersects $H$ transversally, there is a path $a(t)= [a_0(t),1,a_2(t),\cdots,a_{n+1}(t)]$ in $\ST$ such that $a(0)=q$ and  the tangent vector $a'(0)$ to $\ST$ is not in $H$, that is  $a'_0(0) \ne 0$. This path gives rise to a path $b(t)= [a_0(t),1,a_2(t),\cdots,a_{n+1}(t),a_0(t)]$ in  $(\ST \times \C) \cap H ^{\prime}$ such that $b'(0)$ is a tangent vector to this stratum, not contained in the hyperplane
$L$ (since the last component of this vector is not zero).
 \end{proof}

We can similarly define $F_{0}^{\prime}$ , $\U^{\prime}$, $(\U^{\prime})^{c}$ associated to $V^{\prime}$ and $H^{\prime}$.  Let $f^{\prime}$ be the corresponding polynomial in $\C^{n+2}=\mathbb{CP}^{n+2}\setminus H^{\prime}$. Then by applying the Lefschetz hyperplane section theorem to $\U^{\prime}$ and its section $L$, we obtain the isomorphism $\pi_{i}(\U)=\pi_{i}(\U^{\prime})$ for $i\leq n$. Therefore \begin{center}
$H_{i}(\U^{c})\cong H_{i}((\U^{\prime})^{c})\cong H^{2n+2-i}_{c}(F_{0}^{\prime},\psi_{f^{\prime}}\Q)$ for $i\leq n$.
\end{center} Then $H_{i}(\U^{c})$ inherits a MHS from $H^{2n+2-i}_{c}(F_{0}^{\prime},\psi_{f^{\prime}}\Q)$ for $i\leq n$.

In order to finish the proof of Theorem \ref{t1.5}, we need to show that this MHS does not depend on the choice of the transversal pair.

\begin{lem} \label{l6.3} Assume that the polynomial $f: \CN\rightarrow \C$ is transversal at infinity. Then the MHS on $H_{i}(\U^{c})$ ($i\leq n$) inherited from $H^{2n+2-i}_{c}(F_{0}^{\prime},\psi_{f^{\prime}}\Q)$ does not depend on the choice of the transversal pair.
\end{lem}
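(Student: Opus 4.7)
The plan is to realize all transversal pairs for $(V,H)$ as points of a single connected parameter space and to show that the corresponding MHS on $H_{i}(\U^{c})$ fits into an admissible variation of mixed Hodge structure with constant underlying $\Q$-local system; constancy of the Hodge and weight filtrations then follows from connectedness of the base.

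First I would set up the parameter space. For $N$ sufficiently large, let $\T$ parametrize tuples $(V',H',L)$ where $L\subset \mathbb{CP}^{N}$ is a linear subspace of codimension $N-n-1$, and $V',H'\subset \mathbb{CP}^{N}$ are hypersurfaces with $V'\cap L=V$, $H'\cap L=H$, satisfying the transversality conditions (a) and (b) of Lemma \ref{l6.2}. Iterating the cone construction of Lemma \ref{l6.2} shows that every transversal pair for $(V,H)$ appears as a point of some such $\T$ for $N$ large enough, so it suffices to verify the independence of the MHS over this single base. The space $\T$ is a Zariski open subset of a product of a Grassmannian and a linear system, cut out by transversality conditions, hence path-connected.

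Next I would assemble the family. Over $\T$ one has the relative pair of hypersurface and hyperplane at infinity, together with the relative nearby cycle complex $\psi_{f'_{t}}\Q$, which by Saito's theory lifts to an object in $D^{b}\MHM$ of the total space of $\{(F'_{t})_{0}\}_{t\in\T}$. Proper-support pushforward to $\T$ produces an object of $D^{b}\MHM(\T)$ whose fiber at $t$ is $H^{2n+2-i}_{c}((F'_{t})_{0},\psi_{f'_{t}}\Q)$ with its canonical MHS. The Lefschetz hyperplane section theorem applied in the family identifies this with the \emph{constant} local system $H_{i}(\U^{c})$: the isomorphism $H_{i}(\U^{c})\cong H^{2n+2-i}_{c}((F'_{t})_{0},\psi_{f'_{t}}\Q)$ of Theorem \ref{t1.1} is induced by inclusions transversal to the relevant stratifications, so the underlying $\Q$-local system of the family is canonically trivial.

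The main obstacle will be verifying two $\MHM$-theoretic compatibilities: (i) that the restriction $\psi_{f'_{t}}\Q|_{F_{0}}$ agrees with $\psi_{f}\Q$ as mixed Hodge modules, via the transversal base change $L\hookrightarrow\mathbb{CP}^{N}$ applied to nearby cycles; and (ii) that the Lefschetz comparison of Theorem \ref{t1.1} assembles into a morphism of objects in $D^{b}\MHM(\T)$ whose underlying $\Q$-structure coincides with the canonical trivialization above. Granted these, the resulting admissible variation of MHS on $\T$ has trivial underlying local system on a connected base, forcing its Hodge and weight filtrations to be constant. Hence any two points $t_{1},t_{2}\in\T$, i.e.\ any two transversal pairs for $(V,H)$, induce the same MHS on $H_{i}(\U^{c})$, completing the proof.
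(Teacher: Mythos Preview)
Your deformation-theoretic strategy is quite different from the paper's argument, which avoids parameter spaces entirely. The paper compares an arbitrary transversal pair $(V'_2,H'_2,L_2)$ to the explicit cone pair $(V'_1,H'_1,L_1)$ of Lemma~\ref{l6.2} by applying that same cone construction once more, this time to $(V'_2,H'_2)$, producing a pair $(V'',H'')$ in $\mathbb{CP}^{n+3}$. This yields a commutative square of closed algebraic inclusions
\[
\xymatrix{
(V,H) \ar[r]^{i_2} \ar[d]_{i_1} & (V'_2,H'_2) \ar[d]^{i'_2} \\
(V'_1,H'_1) \ar[r]_{i'_1} & (V'',H'')
}
\]
in which $i_1$, $i_2$, $i'_2$ are transversal-pair inclusions by construction, hence induce isomorphisms on $\pi_i$ of the complements for $i\le n$; therefore so does $i'_1$. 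Since every arrow is algebraic, the induced identifications on the nearby-cycle hypercohomology are MHS morphisms, and independence follows directly from the commutativity of the square. No families, no rigidity theorems.

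Two remarks on your outline. First, in this paper a transversal pair lives specifically in $\mathbb{CP}^{n+2}$, so the business of varying $N$ and ``iterating the cone construction'' to populate $\T$ is beside the point; you should simply take $N=n+2$. Second, and more substantively, the final step --- that an admissible VMHS over a connected quasi-projective base with constant underlying $\Q$-local system has constant Hodge and weight filtrations --- is not the tautology you present it as. The weight filtration is indeed constant (it is by sub-local-systems), and each weight-graded piece is then a polarized VHS with trivial monodromy, hence constant by rigidity of polarized VHS; but the Hodge filtration on the total space encodes the extension classes among these graded pieces, and excluding nonconstant admissible extensions requires a genuine boundary argument, not merely connectedness of $\T$. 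Together with the two ``main obstacles'' you already flag (the relative nearby-cycle construction in $D^{b}\MHM(\T)$ and the family version of the Lefschetz comparison), this makes your route substantially heavier than the paper's direct roof argument.
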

\begin{proof} Choose $(V_{1}^{\prime},H_{1}^{\prime},L_{1})$ as defined in Lemma \ref{l6.2}. Let $(V_{2}^{\prime},H_{2}^{\prime},L_{2})$ be an arbitrary transversal pair for $(V,H)$.  Without loss of generality, we can assume that we add one new variable $x_{n+3}$ such that $L_{2}=\lbrace x_{n+3}=0\rbrace$, $H_{2}^{\prime}=\lbrace x_{0}=0 \rbrace$ and $V_{2}^{\prime}=\lbrace g=0 \rbrace$, where $g(x_{0},\cdots,x_{n+1},x_{n+3})\vert_{x_{n+3}=0}=\widetilde{f}$.  Construct $V^{\prime\prime}$ and $H^{\prime\prime}$ for $(V_{2}^{\prime},H_{2}^{\prime})$ as we did in Lemma \ref{l6.2}: $V^{\prime\prime}=\lbrace g=0 \rbrace$ and $H^{\prime\prime}=\lbrace x_{0}-x_{n+2}=0 \rbrace$.

Consider the following commutative diagram of spaces and maps:\begin{center}
$\xymatrix{
(V,H) \ar[r]^{i_{2}} \ar[d]^{i_{1}}  & (V_{2}^{\prime},H_{2}^{\prime}) \ar[d]^{i_{2}^{\prime}}  \\
(V^{\prime}_{1},H_{1}^{\prime}) \ar[r]^{i^{\prime}_{1}}         & (V^{\prime \prime},H^{\prime \prime})              
}$
\end{center}
where $i_{1}^{\prime}$ is induced by intersection with the hyperplane $\lbrace x_{n+3}=0 \rbrace$,   and  $i_{2}^{\prime}$ is induced by the intersection with hyperplane $\lbrace x_{n+2}=0 \rbrace$

In this diagram, $i_{2}$ is an inclusion for a transversal pair by the assumption, $i_{1}$ and $i_{2}^{\prime}$ are inclusions for the transversal pairs by Lemma \ref{l6.2}, hence they  all induce isomorphisms on $\pi_{i}(\U)$ for $i\leq n$.  Therefore $i_{1}^{\prime}$ also induces an isomorphism on $\pi_{i}(\U)$ for $i\leq n$, which shows that the resulting MHS on $H_{i}(\U^{c})$ is independent of the choice for $i\leq n$.
\end{proof}

\begin{remark}\label{r6.4} The MHS on $H_{i}(\U^{c})$ inherited from $H^{2n+2-i}_{c}(F_{0}^{\prime},\psi_{f^{\prime}}\Q)$ is same as the one inherited from $H^{2n-i}_{c}(F_{0},\psi_{f}\Q)$ for $i<n$.  
In fact, consider the following commutative diagram of spaces and maps:  \begin{center}
$\xymatrix{
F_{0} \ar[r]^{i^{\prime}}   & F_{0}^{\prime}  \ar[d]           &  F^{\prime}_{0}\setminus F_{0} \ar[l]_{j^{\prime}} \ar[d]^{v} \\
          &  V^{\prime}    & V^{\prime}\setminus L \ar[l]
}$
\end{center}  where $F_{0}^{\prime}=V^{\prime}\setminus H^{\prime}$. Since $L$ is transversal to $V^{\prime}\cup H^{\prime}$, Lemma 6.0.5 in \cite{Sc}, applied to $\F=j^{\prime \ast}\psi_{f^{\prime}}\Q$ on $F^{\prime}_{0}\setminus F_{0}$, gives us that  \begin{center}
$H^{j}_{c}(F_{0}^{\prime},Rj^{\prime}_{\ast}\F)\cong H^{j}(V^{\prime}\setminus L,v_{!}\F)$ for all $j$.
\end{center} Note that $\psi_{f^{\prime}}\Q[n+1]$ is a perverse sheaf, hence so is $\F[n+1]=j^{\prime \ast}\psi_{f^{\prime}}\Q[n+1]$. Moreover, Lemma 6.0.5 in \cite{Sc} shows that \begin{center}
 $v_{!}\F[n+1] \in \text{}^{m} D^{\leq 0}(V^{\prime}\setminus L)$, and $Rj^{\prime}_{\ast}\F[n+1] \in \text{}^{m} D^{\geq 0}(F^{\prime}_{0})$,
\end{center}  where $\text{}^{m} D^{\leq 0}$ and $\text{}^{m} D^{\geq 0}$  denote the middle-perversity ($m(2s)=-s$) $t$-structure. (For a quick introduction to $t$-structure, see \cite{B}, \cite{D2} and \cite{Sc}.) $V^{\prime}\setminus L$ and $F_{0}^{\prime}$ are both ($n+1$)-dimensional affine varieties, then Corollary 6.0.4 in \cite{Sc} shows that \begin{equation} \label{6.1}
H^{j}_{c}(F_{0}^{\prime},Rj^{\prime}_{\ast}\F)\cong H^{j}(V^{\prime}\setminus L,v_{!}\F)=0 \text{ for }j\neq n+1
\end{equation}

Consider the distinguish triangle \begin{equation} \label{6.2}
i^{\prime}_{!}i^{\prime !}\psi_{f^{\prime}}\Q\to \psi_{f^{\prime}}\Q \to Rj^{\prime}_{\ast}j^{\prime \ast}\psi_{f^{\prime}}\Q \overset {[1]}{\to}.
\end{equation}  Then we have the following compactly supported long exact sequence: 
\begin{center}
$\xymatrix{
\cdots \ar[r] & H^{2n+2-i}_{c}(F_{0}, i^{\prime !}\psi_{f^{\prime}}\Q) \ar[r]^{(1)} & H^{2n+2-i}_{c}(F_{0}^{\prime},\psi_{f^{\prime}}\Q) \ar[r] & H^{2n+2-i}_{c}(F_{0}^{\prime},Rj^{\prime}_{\ast}\F) \ar[r] & \cdots
}$
\end{center} It follows from (\ref{6.1}) that the morphism (1) is an isomorphism for $i< n$, and a surjective morphism for $i=n$. Since the four functors  in the triangle (\ref{6.2}) induced by the algebraic map $i^{\prime}$ and $j^{\prime}$ can be naturally lifted to the functors on the category of mixed Hodge modules (\cite{Sa}), this long exact sequence extends to the MHS level.

Since $L$ is transversal to $V^{\prime}\cup H^{\prime}$, Lemma 5.1.6 in \cite{Sc} shows that $i^{\prime !}=i^{\prime \ast}[-2]$, and Lemma 4.3.4 in \cite{Sc} gives that $i^{\prime \ast}\psi_{f^{\prime}}\Q\cong \psi_{f}\Q$. So, \begin{center}
$H^{2n+2-i}_{c}(F_{0}, i^{\prime !}\psi_{f^{\prime}}\Q) \cong H^{2n-i}_{c}(F_{0}, i^{\prime \ast}\psi_{f^{\prime}}\Q) \cong H^{2n-i}_{c}(F_{0}, \psi_{f}\Q) $.
\end{center}  Therefore, we have MHS isomorphisms: $H_{c}^{2n-i}(F_{0},\psi_{f}\Q)\cong H_{c}^{2n+2-i}(F^{\prime}_{0},\psi_{f^{\prime}}\Q)$ for $i<n$, and a surjective MHS morphism: $H_{c}^{n}(F_{0},\psi_{f}\Q)\to H_{c}^{n+2}(F^{\prime}_{0},\psi_{f^{\prime}}\Q)$. 
\end{remark}

\subsection{Milnor fibre $F_{\widetilde{f}}$}
The construction in Lemma \ref{l6.2} is very natural, and we can use this construction to get the following generalization of Corollary 4.9 in \cite{L3}.

\begin{cor} \label{c6.5}  Assume that the polynomial $f: \CN\rightarrow \C$ is transversal at infinity. Let $\widetilde{f}$ be the homogenization of $f$ and $F_{\widetilde{f}}$ the corresponding Milnor fibre given by $\widetilde{f}=1$. Then we have isomorphisms of $\Gamma$-modules \begin{center}
$H_{i}(\U^{c})\cong H_{i}(F_{\widetilde{f}})$  for $i\leq n$.
\end{center}   In particular, this shows that $H_{i}(\U^{c})$ is a semi-simple torsion $\Gamma$-module for $i\leq n$, which is annihilated by $t^{d}-1$.
\end{cor}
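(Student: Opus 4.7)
The plan is to invoke the transversal pair construction from Lemma~\ref{l6.2} to replace the situation by one where the defining polynomial is literally the homogeneous polynomial $\widetilde{f}$. First, I would take the transversal pair with $V^{\prime}=\{\widetilde{f}(x_0,\ldots,x_{n+1})=0\}\subset\mathbb{CP}^{n+2}$, $H^{\prime}=\{x_0-x_{n+2}=0\}$, and $L=\{x_{n+2}=0\}$. The critical observation is that on the affine chart $\C^{n+2}=\mathbb{CP}^{n+2}\setminus H^{\prime}$, using the section $x_0-x_{n+2}=1$ (so that $x_0=1+x_{n+2}$), the affine equation for $V^{\prime}$ becomes $\widetilde{f}(1+x_{n+2},x_1,\ldots,x_{n+1})=0$, and after the translation $u=1+x_{n+2}$, $v_i=x_i$, this is simply $\widetilde{f}(u,v_1,\ldots,v_{n+1})=0$. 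Hence $\U^{\prime}$ is biholomorphic to $\C^{n+2}\setminus\{\widetilde{f}=0\}$, i.e.\ the complement of the affine cone over $V$.

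Once this identification is in hand, I would exploit the homogeneity of $\widetilde{f}$: the map $\widetilde{f}\colon \U^{\prime}\to\C^{*}$ is the global Milnor fibration, a locally trivial fibration with fibre $F_{\widetilde{f}}$ and geometric monodromy of order $d$ (given by multiplication by $e^{2\pi i/d}$ via homogeneity). Pulling back along the universal cover $\exp\colon\C\to\C^{*}$ presents $(\U^{\prime})^{c}$ as a fibre bundle over the contractible base $\C$; it is therefore trivializable, and the $\Z$-deck action on $(\U^{\prime})^{c}$ is identified with the monodromy action on $F_{\widetilde{f}}$. This yields $\Gamma$-module isomorphisms $H_i((\U^{\prime})^{c})\cong H_i(F_{\widetilde{f}})$ for all $i$.

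For the final step, I would invoke the Lefschetz hyperplane section theorem exactly as in the proof of Theorem~\ref{t1.5}: since $L$ is transversal to $V^{\prime}\cup H^{\prime}$, the inclusion $\U\hookrightarrow \U^{\prime}$ is a homotopy $(n+1)$-equivalence, which lifts to the infinite cyclic covers and gives $H_i(\U^{c})\cong H_i((\U^{\prime})^{c})$ for $i\leq n$. Concatenating with the previous isomorphism produces $H_i(\U^{c})\cong H_i(F_{\widetilde{f}})$ for $i\leq n$ as $\Gamma$-modules. The remaining claims of the corollary are immediate: $t^{d}-1$ annihilates $H_i(F_{\widetilde{f}})$ because the monodromy $\mu$ satisfies $\mu^{d}=\mathrm{id}$, and semi-simplicity follows because $t^{d}-1$ factors into distinct irreducible cyclotomic factors over $\Q$.

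The main obstacle is really the first step: performing the coordinate change carefully enough to recognise that $\U^{\prime}$ in the transversal pair is genuinely the complement of $\{\widetilde{f}=0\}$ in $\C^{n+2}$, and not just something superficially related to it. Once this clean identification is in place, everything downstream is a routine consequence of the global Milnor fibration of a homogeneous polynomial together with the Lefschetz argument already underlying Theorem~\ref{t1.5}.
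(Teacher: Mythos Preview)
Your proposal is correct and follows essentially the same approach as the paper: the paper also picks the transversal pair of Lemma~\ref{l6.2}, performs the projective linear change $y_{0}=x_{0}-x_{n+2}$, $y_{n+2}=x_{0}$, $y_{i}=x_{i}$ (equivalent to your affine section plus translation) to recognise $f'=\widetilde{f}$ as homogeneous, notes that $(\U')^{c}\simeq F_{\widetilde{f}}$ via the global Milnor fibration, and concludes by Lefschetz. Your write-up is slightly more explicit about the fibration-over-contractible-base argument and the semi-simplicity/annihilation claims, but the strategy is identical.
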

\begin{proof}  
Choose the transversal pair $(V^{\prime},H^{\prime},L)$ as defined in Lemma \ref{l6.2}. Apply the following change of coordinates: $y_{0}=x_{0}-x_{n+2}$, $y_{n+2}=x_{0}$, and $y_{i}=x_{i}$ for $1\leq i \leq n+1$. Then $H^{\prime}=\lbrace y_{0}=0 \rbrace$ and $V^{\prime}=\lbrace \widetilde{f}(y_{n+2},y_{1},\cdots,y_{n+1})=0 \rbrace$. Now $f^{\prime}=\widetilde{f}$ is already a homogeneous polynomial, so $(\U^{\prime})^{c}$ is homotopy equivalent with $F_{\widetilde{f}}$. By applying the Lefschetz hyperplane section theorem,  we obtain isomorphisms \begin{center}
 $H_{i}(\U^{c})\cong H_{i}((\U^{\prime})^{c})\cong H_{i}(F_{\widetilde{f}})$ for $0 \leq i\leq n$. 
\end{center} In fact, we showed that there exists a natural map from $\U^{c}$ to $F_{\widetilde{f}}$, which induces a homotopy $(n+1)$-equivalence. 
\end{proof}
 
The above corollary shows that, for polynomial transversal at infinity, most facts about the (torsion) Alexander modules can be reduced to the case of a homogeneous polynomial.

 We can also use this construction to give a new proof of L\^{e}'s result (\cite{Le}), which states that,  for polynomial transversal at infinity, $F_{0}$ is homotopy equivalent to a bouquet of $n$-spheres. This result is called the {\it Lefschetz Theorem on generic hyperplane complements in hypersurfaces} (see \cite{DP}). 
\begin{cor} Assume that the polynomial $f: \CN\rightarrow \C$ is transversal at infinity. Then $F_{0}$ is homotopy equivalent to a bouquet of $n$-spheres.
\end{cor}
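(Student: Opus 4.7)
The plan is to apply the transversal pair construction of Lemma~\ref{l6.2} to realize $F_0$ as a generic affine hyperplane section of a \emph{contractible} affine variety, and then extract the conclusion from the affine Lefschetz hyperplane theorem together with the Stein-type dimension bound.

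First, I would choose a transversal pair $(V', H', L)$ as in Lemma~\ref{l6.2} and perform the linear change of coordinates used in the proof of Corollary~\ref{c6.5}, after which the polynomial $f'$ defining $V'\setminus H'$ becomes the homogenization $\widetilde{f}$ of $f$. In these coordinates, $F'_0 := V'\setminus H'$ is identified with the affine cone $\{\widetilde{f}=0\}\subset\C^{n+2}$ over $V$. Being stable under scalar multiplication, this set deformation-retracts onto the cone point via $(y,t)\mapsto(1-t)y$, so $F'_0$ is contractible. The same change of coordinates sends $L\cap F'_0$ to $F_0$, so $F_0$ is recovered as $F'_0\cap L$.

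Next, since $L$ is transversal to $V'\cup H'$ by Lemma~\ref{l6.2}, its restriction to the affine chart $\C^{n+2}=\mathbb{CP}^{n+2}\setminus H'$ is an affine hyperplane in general position with respect to a Whitney stratification of the pure $(n+1)$-dimensional affine variety $F'_0$. The affine Lefschetz hyperplane theorem of Hamm--L\^{e} then yields that the pair $(F'_0,F_0)$ is $n$-connected. Combined with the contractibility of $F'_0$, the long exact homotopy sequence forces $\pi_i(F_0)=0$ for $0<i\leq n-1$, i.e.\ $F_0$ is $(n-1)$-connected (with the usual interpretation for small $n$).

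Finally, $F_0$ is a closed affine subvariety of $\C^{n+1}$ of complex dimension $n$, hence by Andreotti--Frankel (in the singular form due to Hamm) it has the homotopy type of a CW complex of real dimension at most $n$. Any $(n-1)$-connected CW complex of dimension $\leq n$ is homotopy equivalent to a wedge of $n$-spheres: for $n\geq 2$, Hurewicz identifies $\pi_n(F_0)$ with $H_n(F_0)$, a choice of generators gives a map $\bigvee_\mu S^n\to F_0$ that is a homology equivalence between $n$-dimensional CW complexes, and Whitehead's theorem promotes it to a homotopy equivalence; the cases $n=0,1$ are handled directly. The only nontrivial point is checking that the transversality of $L$ to $V'\cup H'$ in Lemma~\ref{l6.2} furnishes exactly the ``genericity'' hypothesis Hamm--L\^{e} requires, but this is a straightforward verification rather than a serious obstacle.
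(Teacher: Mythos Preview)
Your proof is correct and follows essentially the same route as the paper: use the transversal pair of Lemma~\ref{l6.2} with the coordinate change of Corollary~\ref{c6.5} to exhibit $F_0$ as a generic hyperplane section of the contractible affine cone $F'_0=\{\widetilde{f}=0\}$, apply the Lefschetz hyperplane theorem to get $(n-1)$-connectedness, and conclude via the $n$-dimensional CW model. The only cosmetic difference is that the paper cites Milnor's Theorem~6.5 for the final step where you spell out Hurewicz plus Whitehead.
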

\begin{proof} By Corollary \ref{c6.5}, can assume that $f^{\prime}=\widetilde{f}$ is homogeneous.
Lefschetz hyperplane section theorem gives that the inclusion $F_{0}\hookrightarrow F_{0}^{\prime}$ is a homotopy $n$-equivalence.  Note that the affine variety $F_{0}^{\prime}= \lbrace \widetilde{f}=0 \rbrace$ is contractible. Then $F_{0}$ is $(n-1)$-connected. $F_{0}$ has the homotopy type of a finite $n$-dimensional CW complex, then Theorem 6.5 in \cite{M} shows that $F_{0}$ is homotopy equivalent to a bouquet of $n$-spheres.
\end{proof}

  Denote $h=f_{d}$.  Since $V$ intersects $H$ transversally, the hyperplane defined by $\lbrace x_{0}=0 \rbrace$ in $\C^{n+2}$ is `generic' for $\widetilde{f}$ (for the statement of `generic', see \cite{DP}, p.483). Then the Milnor fibre of $\widetilde{f}$ is obtained (up to homotopy) from the Milnor fibre of $h$  by attaching $(n+1)$-cells (\cite{DP}, Proposition 9). So this natural inclusion map induces isomophisms $ H_{i}(F_{h}) \cong H_{i}(F_{\widetilde{f}})$ for $i\leq n-1$, and a surjective map $H_{n}(F_{h})\rightarrow H_{n}(F_{\widetilde{f}})$ for $i=n$. Corollary \ref{c6.5} shows that this is compatible with Proposition 4.9 in  \cite{LM2}: $H_{i}(\U^{c})\cong H_{i}(F_{h})$ for $i\leq n-1$, and $H_{n}(\U^{c})$ is a quotient $\Q$-vector space of $H_{n}(F_{h})$.

\begin{remark} As an application of Corollary \ref{c6.5}, we can give obstructions on the eigenvalues
of monodromy operators associated to the Milnor fibre $F_{\widetilde{f}}$, just like Maxim did in section 5 of \cite{LM2}. In particular, the general divisibility results (Theorem \ref{t1.3} in this paper and Theorem 4.2 in \cite{LM2}) hold for $H_{i}(F_{\widetilde{f}})$, where $i\leq n$.
\end{remark}

\begin{example} Assume that $Sing(F_{0})$ itself is a stratum in the stratification, denoted by $\ST$, and $\ST$ is $(s-1)$-connected with $s>1$, where $s$ is the complex dimension of $\ST$. This case has been studied in  section 5.2.2. Note that $H_{i}(F_{h})\cong H_{i}(\U^{c})$ for $i<n$ (\cite{LM2}, Proposition 4.9), so the non-vanishing $H_{i}(F_{h})$ are only in degrees $i=0,n-s,n$.  By Corollary \ref{c6.5}, the non-vanishing $H_{i}(F_{\widetilde{f}})$ are only in degrees $i=0,n-s,n,n+1$. It is interesting to compare this result with Theorem 1 in \cite{LM}. 
\end{example}

\subsection{Mixed Hodge structures on Alexander modules} In this subsection, we show that the above mentioned MHS coincide (including the MHS defined by Dimca and Libgober). The key point is the construction in Corollary \ref{c6.5}, which shows that we can choose $f^{\prime}=\widetilde{f}$ homogeneous.

Assume $(V^{\prime},H^{\prime},L)$ is a transversal pair for $(V,H)$, and define $F_{0}^{\prime}$, $f^{\prime}$, $\U^{\prime}$, $(\U^{\prime})^{c}$ and $F^{\prime}$ (the generic fibre of $f^{\prime}$) associated to $V^{\prime}$ and $H^{\prime}$.  We have the following isomorhisms:  \begin{center}
$H_{i}(\U^{c})\cong H_{i}(F_{\widetilde{f}})\cong H_{i}(F^{\prime})\cong H^{2n+2-i}_{c}(F_{0}^{\prime},\psi_{f^{\prime}}\Q)$ for $i\leq n$.
\end{center}
This shows that we have three ways to define MHS on $H_{i}(\U^{c})$ for $i\leq n$:

(a) Use the nearby cycles to define MHS. This is what we did in the proof of Theorem \ref{t1.5}.

(b) the limit MHS on the Milnor fibre $H_{i}(F_{\widetilde{f}})$.   
 
(c) the limit MHS on the generic fibre $H_{i}(F^{\prime})$. \medskip 

 In fact, these three alternatives define the same MHS. Lemma \ref{l6.3} shows that the MHS defined by these three methods do not depend on the choice of the transversal pair. So we can choose the transveral pair $(V^{\prime},H^{\prime},L)$ as defined in Lemma \ref{l6.2}. Apply the change of coordinates as we did in Corollary \ref{c6.5}, so that $f^{\prime}=\widetilde{f}$ is homogeneous. Therefore, in this case, the generic fibre $F^{\prime}$ is same as the Milnor fibre $F_{\widetilde{f}}$, and  $ H_{i}(F_{\widetilde{f}})$ carries the same MHS as $H^{2n+2-i}_{c}(F_{0}^{\prime},\psi_{f^{\prime}}\Q)$ for all $i$ (see \cite{BS}). In particular, this MHS is canonical: compatible with the action of $\Q[t,t^{-1}]$, i.e., $t: H_{i}(F_{\widetilde{f}})\rightarrow H_{i}(F_{\widetilde{f}})$ is a MHS morphism.
 
 \medskip 

Consider the following three natural inclusions:

(a)  $F_{0} \hookrightarrow F_{0}^{\prime}$ induces the natural MHS morphism $H^{2n-i}_{c}(F_{0},\psi_{f}\Q) \rightarrow H^{2n+2-i}_{c}(F_{0}^{\prime},\psi_{f^{\prime}}\Q)$, which is an isomorphism for $i<n$ and a surjective morphism for $i=n$ (see Remark \ref{r6.4}).   

(b) $F_{h}\hookrightarrow F_{\widetilde{f}}$ is a homotopy $n$-equivalence with compatible monodromy. Then it induces a natural MHS morphism $H_{i}(F_{h})\rightarrow H_{i}(F_{\widetilde{f}})$, which is an isomorphism for $i<n$ and a surjective morphism for $i=n$. 
 
(c) Choose $c\in \C$ such that  $F=f^{-1}(c)$ and $F^{\prime}=(f^{\prime})^{-1}(c)$ are both generic fibres.  There is a commutative diagram as follows: \begin{eqnarray} \label{6.3}
\xymatrix{
F \ar[r]^{i_{c}} \ar[d]_{u^{\prime}}  & \U^{c} \ar[d]^{u}  \\
F^{\prime} \ar[r]^{i^{\prime}_{c}}    &  (\U^{\prime})^{c}                 
}
\end{eqnarray}
 Here $i_{c}$ and $i_{c}^{\prime}$ are the natural inclusions in \cite{DL}, where $i_{c}$ and $i_{c}^{\prime}$ induce a homotopy $n$-equivalence and a homotopy $(n+1)$-equivalence, respectively; $\U^{c}\hookrightarrow (\U^{\prime})^{c}$  is a homotopy $(n+1)$-equivalence by the Lefschetz hyperplane section theorem. Then the natural inclusion $F\hookrightarrow F^{\prime}$ is a homotopy $n$-equivalence. Therefore $F\hookrightarrow F^{\prime}$ induces a natural MHS morphism $H_{i}(F) \rightarrow H_{i}(F^{\prime})$, which is an isomorphism for $i<n$ and a surective morphism for $i=n$.

\medskip
The above discussion finishes the proof of Proposition \ref{p1.6}. In particular, when $\mu=0$, Proposition \ref{p4.2} shows that these 3 surjetive morphisms become isomorphisms. So, there exist MHS isomorphisms \begin{center}
$H_{i}(\U^{c})\cong H^{2n-i}_{c}(F_{0},\psi_{f}\Q)\cong H_{i}(F)\cong H_{i}(F_{h})$  for all $i$.
\end{center}  This shows that, when $\mu=0$, $f$ behaves, both topologically and algebraically (e.g., in terms of the variation of MHS on the cohomology of its smooth fibers), like a homogeneous polynomial.

Dually, for $i \leq n$, the Alexander module $H^{i}(\U^{c})$ has a canonical mixed Hodge structure.

Dimca and Libogber define MHS on $H^{i}(\U^{c})$ by the sub-MHS of $H^{i}(F)$ induced by $i_{c}^{\ast}$ for $i\leq n$. The diagram (\ref{6.3}) shows that there exist MHS isomorphisms \begin{center}
$i_{c}^{\ast}H^{i}(\U^{c})\cong i_{c}^{\ast} u^{\ast} H^{i}((\U^{\prime})^{c})\cong u^{\prime \ast} i_{c}^{\prime \ast}H^{i}((\U^{\prime})^{c})\cong u^{\prime \ast}(H^{i}(F^{\prime}))$ for $i\leq n$.
\end{center}  Therefore we indeed recover the MHS defined by Dimca and Libgober. As a corollary, this also shows that the MHS on $H^{i}(F_{c})$ is independent of choice of $c\in \C\setminus B_{f} $ for $i<n$, where $F_{c}=f^{-1}(c) $ is the generic fibre (see \cite{DL}, Corollary 1.6).



\begin{thebibliography}{[12pt]}



\bibitem{B} 
M.~Banagl, \textit{Topological invariants of stratified spaces}, Springer Monographs in Mathematics. Springer, Berlin, 2007.


\bibitem{Br}
J.~L. Brylinski, \textit{Transformations canoniques, dualit\'{e}projective, th\'{e}orie de Lefschetz, transformations de Fourier et sommes trigonom\'{e}triques}, Ast\'{e}risque No. 140-141 (1986), 3-134, 251.



\bibitem{BS}
N.~Budur, M.~Saito, \textit{Jumping coefficients and spectrum of a hyperplane arrangement}, Math. Ann. 347 (2010), no. 3, 545-579.


\bibitem{Bu2}
N.~Budur, \textit{Bernstein-Saito ideals and local systems}, arXiv:1209.3725 [math.AG].

\bibitem{CS}
S.~Cappell, J.~Shaneson, \textit{Singular spaces, characteristic classes, and intersection homology}, Ann. of Math. (2) 134 (1991), no. 2, 325-374.




\bibitem{D1}
A.~Dimca, \textit{Singularities and Topology of Hypersurfaces}, Universitext. Springer-Verlag, New York 1992.

\bibitem{D2}
A.~Dimca, \textit{Sheaves in Topology}, Universitext. SpringerVerlag, Berlin 2004.

\bibitem{DL}
A.~Dimca, A.~Libgober, \textit{Regular functions transversal at infinity}, Tohoku Mathematical Journal. Volume 58 (2006), Number 4, 549-564.

\bibitem{DN}
A.~Dimca, A.~N\'{e}methi, \textit{Hypersurface complements, Alexander modules and monodromy}, Real and complex singularities, 19-43, Contemp. Math., 354, Amer. Math. Soc., Providence, RI, 2004.


\bibitem{DP}
A.~Dimca, S.~Papadima, \textit{Hypersurface complements, Milnor fibers and higher homotopy groups of arrangments}, Ann. of Math. (2) 158(2003), no. 2, 473-507.

\bibitem{DS} 
A.~Dimca, M.~Saito, \textit{Some consequences of perversity of vanishing cycles}, Ann. Inst. Fourier (Grenoble) 54 (2004), no. 6, 1769-1792 (2005).

\bibitem{DS2}
A.~Dimca, M.~Saito, \textit{Some remarks on limit mixed Hodge structure and spectrum}, arXiv:1210.3971.


\bibitem{GM1}
M.~Goresky, R.~MacPherson, \textit{Intersection homology theory}, Topology 19 (1980), no. 2, 135-162.

\bibitem{GM2}
M.~Goresky, R.~MacPherson, \textit{Intersection homology theory, $\uppercase\expandafter{\romannumeral 2}$}, Invent. Math. 72 (1983), no. 1, 77-129.


\bibitem{Huh}
J. Huh, \textit{Milnor numbers of projective hypersurfaces with isolated singularities}, arXiv:1210.2690 [math.AG].

\bibitem{Le}
D.~T. L\^{e}, \textit{Le concept de singularit\'{e} isol\'{e}e de fonction analytique}, Complex analytic singularities, 215-227, Adv. Stud. Pure Math., 8, North-Holland, Amsterdam, 1987.

\bibitem{LM}
D.~T. L\^{e}, D.~Massey, \textit{A remark on vanishing cycles with two strata}, C. R. Acad. Sci. Paris, Ser. $\uppercase\expandafter{\romannumeral 1}$ 350(2012), 217-220.

\bibitem{L1}
A.~Libgober, \textit{Alexander polynomial of plane algebraic curves and cyclic multiple planes}, Duke Math. J. 49 (1982), no. 4, 833-851.

\bibitem{L2}
A.~Libgober, \textit{Alexander invariants of plane algebraic curves}, Singularities, Part 2 (Arcata, Calif., 1981), 135-143, Proc. Sympos. Pure Math., 40, Amer. Math. Soc., Providence, RI, 1983.

\bibitem{L3}
A.~Libgober, \textit{Homotopy groups of the complements to singular hypersurfaces, $\uppercase\expandafter{\romannumeral 2}$}, Ann. of Math. (2) 139(1994), no. 1, 117-144.



\bibitem{LiM} Y. Liu, L. Maxim, {\it Reidemeister torsion, peripheral complex, and Alexander polynomials of hypersurface complements}, preprint 2014.




\bibitem{LM1}
L.~Maxim, \textit{A decomposition theorem for the Peripheral Complex associated with hypersurfaces}, Int. Math. Res. Not. (2005), no. 43, 2627-2656.

\bibitem{LM2}
L.~Maxim, \textit{Intersection homology and Alexander modules of hypersurface complements}, Comment. Math. Helv. 81 (2006), no. 1, 123-155.



\bibitem{M}
J.~Milnor, \textit{Singular points of Complex Hypersurfaces}, Annals of Mathematics Studies, No. 61 Princeton University Press, Princeton, N.J.; University of Tokyo Press, Tokyo 1968.



\bibitem{Ma}
D.~Massey, \textit{Notes on Perverse Sheaves and Vanishing Cycles}, arXiv:math/9908107.



\bibitem{Oka}
M.~Oka, \textit{A survey on Alexander polynomials of plane curves}, S\'{e}minaires Cong\`{e}s 10, 2005, 209-232.

\bibitem{S}
M.~Saito, \textit{Introduction to Mixed Hodge Modules}, Actes du Colloque de Th\'{e}orie de Hodge (Luminy, 1987). Ast\'{e}risque No. 179-180 (1989), 10, 145-162.


\bibitem{Sa}
C.~Sabbah, \textit{Modules d'Alexander et D-modules}, Duke Math. J. 60 (1990), no. 3, 729-814. 

\bibitem{Sc}
J.~Sch\"{u}rmann, \textit{Topology of Singular Spaces and Constructible Sheaves}, Instytut Matematyczny Polskiej Akademii Nauk. Monografie Matematyczne (New Series) [Mathematics Institute of the Polish Academy of Sciences. Mathematical Monographs (New Series)], 63. Birkh\"{a}user Verlag, Basel, 2003.



\end{thebibliography}
\end{document}